\documentclass[pdflatex,sn-mathphys-num]{sn-jnl}% Math and Physical Sciences 

\raggedbottom

\graphicspath{{Fig/}}

%% as per the requirement new theorem styles can be included as shown below
\theoremstyle{thmstyleone}%
\newtheorem{theorem}{Theorem}%  meant for continuous numbers
%%\newtheorem{theorem}{Theorem}[section]% meant for sectionwise numbers
%% optional argument [theorem] produces theorem numbering sequence instead of independent numbers for Proposition
% 
%%\newtheorem{proposition}{Proposition}% to get separate numbers for theorem and proposition etc.
\newtheorem{lemma}[theorem]{Lemma}
\newtheorem{corollary}{Corollary}%
\newtheorem{conjecture}{Conjecture}

\theoremstyle{thmstyletwo}%

\theoremstyle{thmstylethree}%
\newtheorem{remark}{Remark}%
%

%%%%%%%%%%%%%%%%%%%%%%%%%%%%%%%%%%%%%%%%%%%%%%%%%%%%%%%%%%%%%%%%%%%%%%%%%%

\usepackage{subcaption}

\usepackage{mathtools}
\usepackage[utf8]{inputenc}
\usepackage{bm}
\usepackage{bbm}
\usepackage{graphicx}%
\usepackage{multirow}%
\usepackage{amsmath,amssymb,amsfonts}%
\usepackage{amsthm}%
\usepackage{mathrsfs}%
\usepackage[title]{appendix}%
\usepackage{xcolor}%
\usepackage[most]{tcolorbox}

\tcbset{
  projectstyle/.style={
    colback=gray!7,
    colframe=BrickRed!35,
    fonttitle=\bfseries,
    coltitle=black,
    boxrule=0.4pt,
    arc=2mm, outer arc=2mm,
    breakable, enhanced,
    before skip=10pt, after skip=10pt,
    left=8pt, right=8pt, top=6pt, bottom=6pt
  },
  % a tiny helper to set the title with one argument
  projecttitle/.style={title={#1}},
  projectblue/.style={projectstyle, colback=blue!4,  colframe=MidnightBlue!60},
  projectgreen/.style={projectstyle, colback=ForestGreen!5, colframe=ForestGreen!70!black, 
  coltitle=white     
},
  projectplum/.style={projectstyle, colback=gray!7, colframe=Plum!70!black, 
  coltitle=white     
},
  projectgray/.style={projectstyle, colback=gray!7, colframe=Gray!70!black, 
  coltitle=white     
},
  projectsepia/.style={projectstyle, colback=gray!7, colframe=Sepia!70!black, 
  coltitle=white     
}
}

% Usage:
% \begin{tcolorbox}[projectstyle,projecttitle={Overview}] ... \end{tcolorbox}
% \begin{tcolorbox}[projectblue,projecttitle={Data \& Methods}] ... \end{tcolorbox}
% \begin{tcolorbox}[projectgreen,projecttitle={Results}] ... \end{tcolorbox}

\usepackage{textcomp}%
\usepackage{manyfoot}%
\usepackage{booktabs}%
\usepackage{algorithm}%
\usepackage{algorithmicx}%
\usepackage{algpseudocode}%
\usepackage{listings}%
\usepackage{comment}
\usepackage[T1]{fontenc}

\makeatletter
\def\amsbb{\use@mathgroup \M@U \symAMSb}
\makeatother

\usepackage{caption}

\usepackage{bbold}

\newcommand{\bga}{\begin{aligned}}
\newcommand{\ena}{\end{aligned}}
\newcommand{\bge}{\begin{enumerate}}
\newcommand{\ene}{\end{enumerate}}
\newcommand{\red}[1]{{\color{red} #1}}

\newcommand{\hide}[1]{}

%\usepackage{subcaption}
%\usepackage{pgfplots}
%\pgfplotsset{compat=1.15}
\usepackage{booktabs}
\usepackage{xcolor}
\definecolor{webgreen}{rgb}{0,.5,0}
\definecolor{webbrown}{rgb}{.6,0,0}
\definecolor{RoyalBlue}{cmyk}{1, 0.50, 0, 0}
\usepackage{tikz}
\usepackage{pict2e}
\usepackage{todonotes}
\usetikzlibrary{decorations.markings}
\DeclareSymbolFont{bbold}{U}{bbold}{m}{n}
\DeclareSymbolFontAlphabet{\mathbbold}{bbold}

\newcommand{\R}{{\mathbb R}}

\newcommand{\Q}{{\mathbb Q}}

\newcommand{\N}{{\mathbb N}}

\newcommand{\T}{{\mathbb T}}

\newcommand{\al}{\alpha}
\newcommand{\be}{\beta}

\newcommand{\ga}{\gamma}
\newcommand{\Ga}{\Gamma}

\newcommand{\la}{\lambda}

\newcommand{\de}{\delta}

\newcommand{\di}{\displaystyle}

\newcommand{\dd}{\textrm{d}}
\newcommand{\ddd}{\partial}

\newcommand{\qasq}{\quad \text{as} \quad}
\newcommand{\qandq}{\quad \text{and} \quad}

%% Spacings Table Of Contents
\usepackage{etoolbox}
\makeatletter
\pretocmd{\section}{\addtocontents{toc}{\protect\addvspace{3\p@}}}{}{}
\pretocmd{\subsection}{\addtocontents{toc}{\protect\addvspace{2\p@}}}{}{}
\makeatother

\makeatletter
\DeclareRobustCommand\widecheck[1]{{\mathpalette\@widecheck{#1}}}
\def\@widecheck#1#2{%
	\setbox\z@\hbox{\m@th$#1#2$}%
	\setbox\tw@\hbox{\m@th$#1%
		\widehat{%
			\vrule\@width\z@\@height\ht\z@
			\vrule\@height\z@\@width\wd\z@}$}%
	\dp\tw@-\ht\z@
	\@tempdima\ht\z@ \advance\@tempdima2\ht\tw@ \divide\@tempdima\thr@@
	\setbox\tw@\hbox{%
		\raise\@tempdima\hbox{\scalebox{1}[-1]{\lower\@tempdima\box
				\tw@}}}%
	{\ooalign{\box\tw@ \cr \box\z@}}}
\makeatother

\usepackage[mathscr]{euscript}
\usepackage{graphicx} 

%HYPERGEOMETRIC FUNCTION
\usepackage{amsmath}

\ExplSyntaxOn

\NewDocumentCommand{\pFq}{O{}mmmmm}
 {
  % #2 = left subscript, #3 = right subscript
  % #4 = top, #5 = bottom, #6 = right
  \group_begin:
  \keys_set:nn { hypergeometric } { #1 }
  \hypergeometric_print:nnnnn { #2 } { #3 } { #4 } { #5 } { #6 }
  \group_end:
 }
\NewDocumentCommand{\hypergeometricsetup}{m}
 {
  \keys_set:nn { hypergeometric } { #1 }
 }

\tl_new:N \l_hypergeometric_divider_tl
\tl_new:N \l_hypergeometric_left_tl
\tl_new:N \l_hypergeometric_right_tl

\keys_define:nn { hypergeometric }
 {
  symbol .tl_set:N = \l_hypergeometric_symbol_tl,
  symbol .initial:n = F,
  separator .tl_set:N = \l_hypergeometric_separator_tl,
  separator .initial:n = {},
  skip .tl_set:N = \l_hypergeometric_skip_tl,
  skip .initial:n = 8,
  divider .choice:,
  divider/semicolon .code:n = \tl_set:Nn \l_hypergeometric_divider_tl { \;; },
  divider/bar .code:n = \tl_set:Nn \l_hypergeometric_divider_tl { \;\middle|\; },
  divider .initial:n = semicolon,
  fences .choice:,
  fences/brack .code:n = 
   \tl_set:Nn \l_hypergeometric_left_tl {[}
   \tl_set:Nn \l_hypergeometric_right_tl {]},
  fences/parens .code:n = 
   \tl_set:Nn \l_hypergeometric_left_tl {(}
   \tl_set:Nn \l_hypergeometric_right_tl {)},
  fences .initial:n = brack,
 }

\cs_new_protected:Nn \hypergeometric_print:nnnnn
 {
  % the main symbol
  {} \sb {#1} \l_hypergeometric_symbol_tl \sb { #2 }
  % the parameters
  \left\l_hypergeometric_left_tl
  \genfrac .. % no delimiters
           {0pt} % no line
           {} % default style
           { \__hypergeometric_process:n { #3 } } % numerator
           { \__hypergeometric_process:n { #4 } } % denominator
  \l_hypergeometric_divider_tl
  #5
  \right\l_hypergeometric_right_tl
 }

\cs_new_protected:Nn \__hypergeometric_process:n
 {
  \clist_use:nn { #1 }
   {
    {\l_hypergeometric_separator_tl}
    \mspace { \l_hypergeometric_skip_tl mu }
   }
 }

\ExplSyntaxOff

\definecolor{BrickRed}{rgb}{0.8, 0.25, 0.33}

\usepackage{hyperref}
\hypersetup{
    colorlinks=true,
    linkcolor=RoyalBlue,
    filecolor=red,      
    urlcolor=RoyalBlue,
    citecolor=webgreen,
}

%%%%%%%%%%%%%%%%%%%%%%%%%%%%%%%%%%%%%%%%%%%%%%%%%%%%%%%%%%%%%%%%%%%%%%%%

\begin{document}

\author*[1]{\fnm{Roozbeh} \sur{Gharakhloo}}\email{roozbeh@ucsc.edu}

\author[2]{\fnm{Tomas} \sur{Lasic Latimer}}\email{tlasicla@ucsc.edu}

\affil[1,2]{\orgdiv{Mathematics Department}, \orgname{University of California, Santa Cruz}, \orgaddress{\street{1156 High Street}, \postcode{95064}, \state{CA}, \country{USA}}}

%\subtitle{Subject Section}

\title[Combinatorics of Even-Valent Graphs]{Combinatorics of Even-Valent Graphs on Riemann Surfaces}

\abstract{In this paper, we derive explicit formulae for the
number of regular even-valent graphs, with fixed minimal embedding genus, in
which both the valence parameter and the number of vertices are allowed to vary.
Our results extend the explicit formulae of Ercolani--McLaughlin--Pierce
(2008) for genus \(0\) and of Ercolani--Lega--Tippings (2023) for genus \(1\).

More precisely, we obtain explicit counts \(\mathscr{N}_g(2\nu,j)\)---with
\(\nu\) and \(j\) as variables---of graphs with \(j\) vertices of uniform
valence \(2\nu\) and minimal embedding genus \(g\), for \(2\leq g\leq 4\).
We also obtain the corresponding formulae for the two-legged counts
\(\mathcal{N}_g(2\nu,j)\). The method applies to \(g\geq 5\), with increasing
computational effort as \(g\) increases. Finally, we derive leading-order
large-valence asymptotics for these counts when \(g\leq 4\), and formulate a
structural conjecture for higher genus.

\hspace{1cm}

\textbf{Acknowledgements:} We would like to thank P. Bleher, N. Ercolani, J. Lega, K. McLaughlin and B. Tippings for helpful discussions.}

\keywords{enumeration in graph theory, orthogonal polynomials, random matrix models, topological expansion, quantum gravity, combinatorial generating functions, Catalan numbers}
% \boxedtext{
% \begin{itemize}
% \item Key boxed text here.
% \item Key boxed text here.
% \item Key boxed text here.
% \end{itemize}}

\maketitle

%\vspace{-.6cm}

%\tableofcontents

\section{Introduction and Main Results}\label{sect:intro}

Let a \textit{map} be a labeled, connected graph embedded in a compact, oriented, and connected Riemann surface such that the complement of the graph is a disjoint union of cells. The problem of enumerating maps for a fixed choice of
\begin{itemize}
\item[]i) number of vertices,
\item[]ii) valence at each vertex, and
\item[]iii) genus of the underlying surface
\end{itemize}
has inspired a rich body of research, drawing on both purely combinatorial methods and techniques from random matrix theory. The earliest work on map enumeration was carried out by Tutte \cite{Tutte}, who used a combinatorial approach to count maps embedded on the plane. Later efforts extended to maps on Riemann surfaces of low genus \cite{Brown, Arques}, as well as to the study of the asymptotic behaviour of these enumerations as the number of vertices grow \cite{BC0, BC1}. In addition to these purely combinatorial approaches, random matrix models have emerged as a powerful tool for deriving generating functions in map enumeration problems. Motivated by applications in quantum field theory, the connection of random matrix models to map enumeration was first established in the seminal work of Brezin, Itzykson, Parisi and Zuber \cite{BIPZ} (based on the earlier work of 't Hooft \cite{tHooft}). Efforts to develop models of quantum gravity led to a simplified two-dimensional framework, in which collections of distinct geometries on Riemann surfaces are analyzed under a natural probability measure. A key challenge in this context has been understanding the distribution of these geometries, which requires determining their total number, making it a map enumeration problem. The seminal works of theoretical physicists such as David \cite{David85}, Kazakov \cite{Kaz85}, Witten \cite{Witten}, and Bessis, Itzykson, and Zuber \cite{Bes79, BIZ}, built on the connections between two-dimensional quantum gravity and random matrix models and in doing so further developed the field of map enumeration.  We refer to \cite{DiF}, \cite{FGZ}, and \cite{Zvonkin} as excellent reviews on these developments.

\begin{figure}[!htp]
    \centering
    \begin{minipage}{0.24\textwidth}
        \centering
        \includegraphics[width=\textwidth]{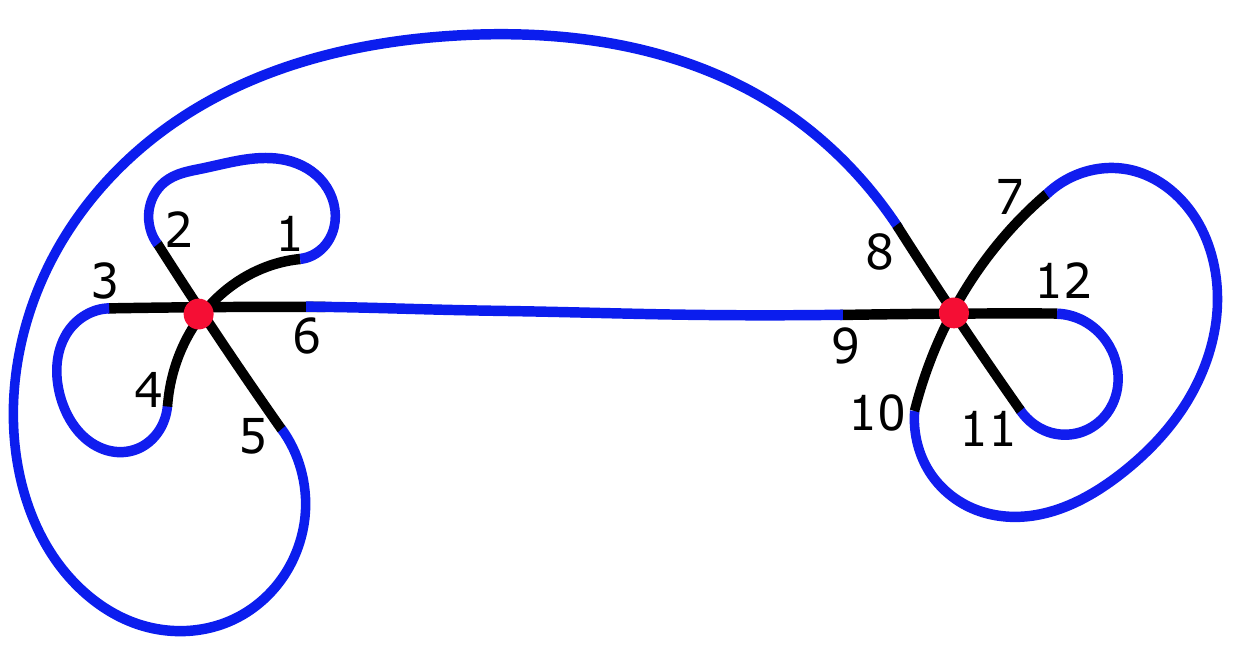}
        \subcaption{}
    \end{minipage}\hfil
    \begin{minipage}{0.24\textwidth}
        \centering
        \includegraphics[width=\textwidth]{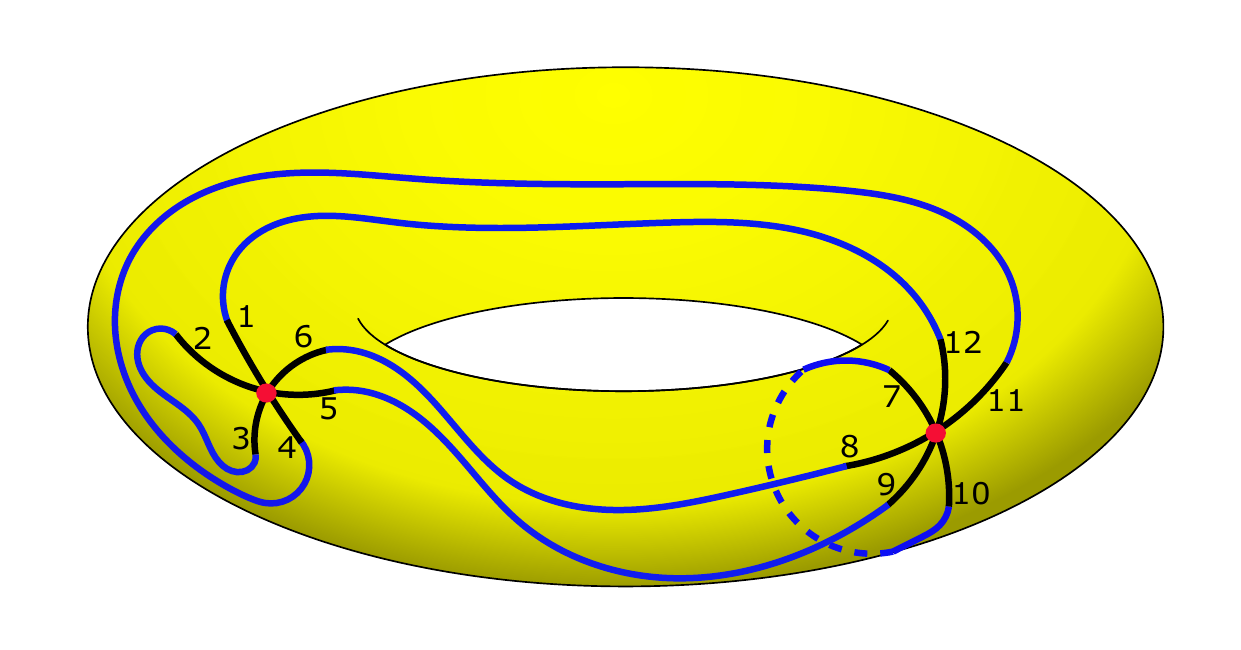}
        \subcaption{}
    \end{minipage}\hfil
    \begin{minipage}{0.24\textwidth}
        \centering
        \includegraphics[width=\textwidth]{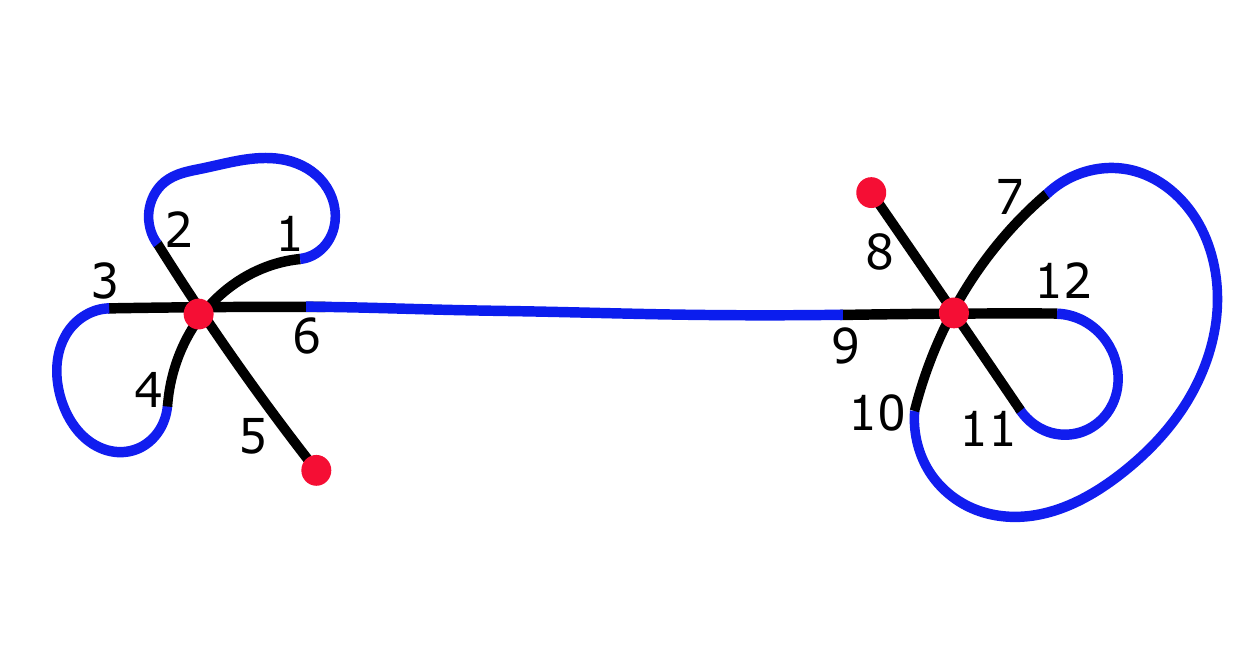}
        \subcaption{}
    \end{minipage}\hfil
    \begin{minipage}{0.24\textwidth}
        \centering
        \includegraphics[width=\textwidth]{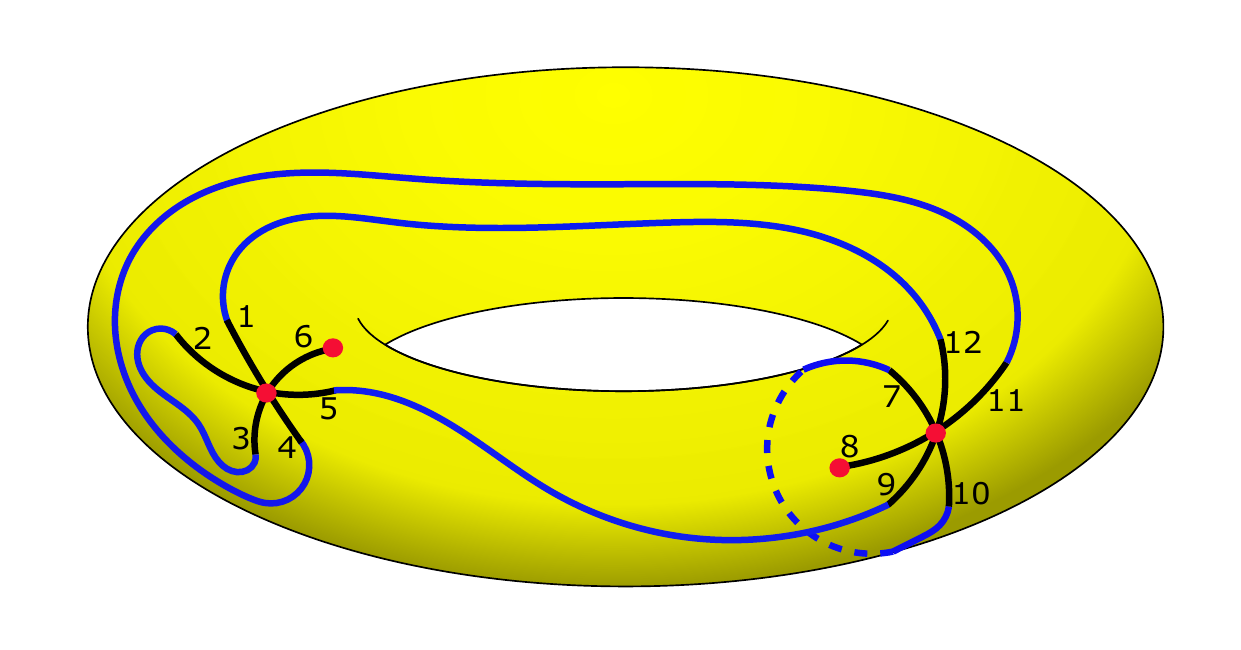}
        \subcaption{}
    \end{minipage}
    \caption{Examples of regular and two-legged graphs.  Panels (a) and
    (b) show regular \(6\)-valent graphs with two vertices on the sphere and
    torus, respectively; the graph in (b) has minimal embedding genus one.
    Panels (c) and (d) show corresponding two-legged examples.}
    \label{fig:intro}
\end{figure}

\textbf{Notation.} Let \(\mathscr N_g(\mu,j)\) denote the number of connected labeled graphs with
\(j\) \(\mu\)-valent vertices and minimal embedding genus \(g\).  The adjective
``minimal'' is important: a graph that embeds in a surface of genus \(g_0\) also
embeds in any surface of genus \(g\geq g_0\), since the additional handles
provide more flexibility.  Thus, for example, \(\mathscr N_2(\mu,j)\) does not
include graphs already embeddable in the plane or in the torus.  Similarly, let
\(\mathcal N_g(\mu,j)\) denote the number of connected two-legged labeled graphs
with \(j\) \(\mu\)-valent vertices and minimal embedding genus \(g\).  Here a
\emph{leg} means a one-valent vertex together with its unique incident edge; see
Figure~\ref{fig:intro}.

\begin{tcolorbox}[projectstyle={Custom}, colback=yellow!6, colframe=orange!70!black]
The purpose of the present paper is to obtain \textit{fully explicit} formulae for
\[
    \mathscr N_g(2\nu,j)
    \qquad\text{and}\qquad
    \mathcal N_g(2\nu,j),
\]
with both the valence parameter \(\nu\) and the number of vertices \(j\) treated
as variables, for fixed genus \(g\).
\end{tcolorbox}

  The first fully explicit result of this
type is due to Ercolani, McLaughlin, and Pierce
\cite{Ercolani-McLaughlin-Pierce}, who proved that, in genus zero,
\begin{equation}\label{numbers sphere}
    \mathscr N_0(2\nu,j)
    =
    c_\nu^j
    \frac{(\nu j-1)!}{((\nu-1)j+2)!},
    \qquad
    c_\nu:=\nu\binom{2\nu}{\nu}.
\end{equation}
More recently, Ercolani, Lega, and Tippings~\cite{Erc-et-al-23b} obtained the
following fully explicit genus-one formula:
\begin{multline}\label{eq:e1_counts}
{\mathscr N}_{1}(2\nu,j)
=
\dfrac{j!\, c_\nu^j}{12}
\Bigg(
(\nu - 1)
{\nu j-1 \choose j-1}
\pFq{3}{2}{1,1,1-j}{2,(\nu-1)j+1}{1-\nu}
\\
-
(\nu - 1)^2
{\nu j-1 \choose j-2}
\pFq{3}{2}{1,1,2-j}{2,(\nu-1)j+2}{1-\nu}
\Bigg).
\end{multline}

The same work also established \textit{structural} formulae in arbitrary genus in terms
of Gauss hypergeometric functions.  For \(g\geq 2\) and \(j\geq 1\), they proved
that
\begin{equation}\label{eq:N-ELT-hypergeom}
{\mathscr N}_{g}(2\nu,j)
=
j! c_\nu^j (\nu - 1)^j
\sum_{\ell = 0}^{3g-3}
b^{(g,\nu)}_{\ell}
{2g + \ell+j -4  \choose j}
\pFq{2}{1}
{-j,1-\nu j}{4-2g-(\ell+j)}
{\frac{1}{1-\nu}},
\end{equation}
and, for two-legged graphs with \(g\geq 1\) and \(j\geq 1\),
\begin{equation}\label{eq:Ncal-ELT-hypergeom}
{\mathcal N}_{g}(2\nu,j)
=
j! c_\nu^j (\nu-1)^{j}
\sum_{\ell = 0}^{3g-1}
a^{(g,\nu)}_{\ell}
{2g + \ell+j -2  \choose j}
\pFq{2}{1}
{-j,-\nu j}{2-2g-(\ell+j)}
{\frac{1}{1-\nu}}.
\end{equation}
These formulae mark a significant advancement in the enumeration of regular
even-valent maps.  However, they do not by themselves give fully explicit
formulae for the counts as for each genus \(g\), one must still determine the
\(3g-2\) coefficients \(b_\ell^{(g,\nu)}\) and the \(3g\) coefficients
\(a_\ell^{(g,\nu)}\) as functions of the valence parameter \(\nu\).

The present work fills this gap in the first nontrivial higher-genus cases,
while also providing a framework for obtaining the sought-after fully explicit
formulae for any prescribed genus, with increasing computational effort.  For
\(g=2,3,4\), we explicitly determine all coefficients 
\(b_{\ell}^{(g,\nu)}\) appearing in \eqref{eq:N-ELT-hypergeom} and find them to be polynomials of degree $3g-3$; and, for
\(g=1,2,3,4\), we explicitly determine all coefficients 
\(a_\ell^{(g,\nu)}\) appearing in \eqref{eq:Ncal-ELT-hypergeom} and find them to be polynomials of degree $3g-1$.  Substitution
of these coefficients into the structural formulae of
Ercolani--Lega--Tippings yields explicit closed formulae for
\(\mathscr N_g(2\nu,j)\) and \(\mathcal N_g(2\nu,j)\), with both \(j\) and
\(\nu\) treated as variables.  These formulae also lead directly to leading-order
large-valence asymptotics as \(\nu\to\infty\) which requires the knowledge of \textit{all} the coefficients \(b_{\ell}^{(g,\nu)}\) and \(a_{\ell}^{(g,\nu)}\) for a fixed $g$. This is in contrast with the leading-order
large-vertex asymptotics which only requires the knowledge of the top coefficients \(b_{3g-3}^{(g,\nu)}\) and
\(a_{3g-1}^{(g,\nu)}\) \cite{Erc-et-al-23b}.

\vspace{.2cm}

We now state the main results. The explicit determination of the coefficient
polynomials \(b_\ell^{(g,\nu)}\) and 
\(a_\ell^{(g,\nu)}\) is the key step that makes the formulae below \textit{fully explicit}.  To
preserve the flow of the introduction, we state the main enumeration
formulae here and provide the lengthy coefficient lists themselves in
Section~\ref{sec:coefficient-tables}.

\begin{theorem}\label{thm:main-N}
Let \(g\in\{2,3,4\}\) and \(j\geq 1\).  Then
\begin{equation}\label{eq:main-N}
    \mathscr{N}_g(2\nu,j)
    =
    c_{\nu}^j j!
    \sum_{r=0}^j
    (\nu-1)^r
    \binom{\nu j-1}{j-r}
    \sum_{\ell=0}^{3g-3}
    b_{\ell}^{(g,\nu)}
    \binom{2g-4+\ell+r}{r}.
\end{equation}
For each \(g\in\{2,3,4\}\), the coefficients
\(b_\ell^{(g,\nu)}\), \(0\leq \ell\leq 3g-3\), are explicit polynomials in
\(\nu\) of degree \(3g-3\) listed in Section \ref{Sec explicit b coeffs}.
\end{theorem}

\begin{theorem}\label{thm:main-Ncal}
Let \(g\in\{1,2,3,4\}\) and \(j\geq 1\).  Then
\begin{equation}\label{eq:main-Ncal}
    \mathcal{N}_g(2\nu,j)
    =
    c_{\nu}^j j!
    \sum_{r=0}^j
    (\nu-1)^r
    \binom{\nu j}{j-r}
    \sum_{\ell=0}^{3g-1}
    a_{\ell}^{(g,\nu)}
    \binom{2g-2+\ell+r}{r}.
\end{equation}
For each \(g\in\{1,2,3,4\}\), the coefficients
\(a_\ell^{(g,\nu)}\), \(0\leq \ell\leq 3g-1\), are explicit polynomials in
\(\nu\) of degree \(3g-1\) listed in Section \ref{Sec explicit a coeffs}.
\end{theorem}

We emphasize that the coefficient polynomials in
\eqref{eq:main-N} and \eqref{eq:main-Ncal} are the same coefficients that appear
in the hypergeometric formulae \eqref{eq:N-ELT-hypergeom} and
\eqref{eq:Ncal-ELT-hypergeom}.  

\vspace{.2cm}

The next two corollaries describe the leading-order large-valence behavior of
these formulae.  For this purpose, define
\begin{equation}\label{eq:B-def}
    B_\ell^{(g)}
    :=
    \lim_{\nu\to\infty}
    \frac{b_\ell^{(g,\nu)}}{\nu^{3g-3}},
    \qquad
    0\leq \ell\leq 3g-3,
\end{equation}
and
\begin{equation}\label{eq:A-def}
    A_\ell^{(g)}
    :=
    \lim_{\nu\to\infty}
    \frac{a_\ell^{(g,\nu)}}{\nu^{3g-1}},
    \qquad
    0\leq \ell\leq 3g-1.
\end{equation}
For the genera appearing in the following corollaries, these constants are
explicit, since the corresponding coefficient polynomials
\(b_\ell^{(g,\nu)}\) and \(a_\ell^{(g,\nu)}\) are explicitly given in
Sections~\ref{Sec explicit b coeffs} and~\ref{Sec explicit a coeffs}.
\begin{corollary}
\label{cor:large-nu-N}
For fixed \(j\geq 1\) and \(2\leq g\leq 4\), as \(\nu\to\infty\),
\begin{equation}\label{eq:large-nu-N}
    \frac{\mathscr{N}_g(2\nu,j)}
    {4^{\nu j}\nu^{3j/2+3g-3}}
    =
    C_{g,j}^{\mathscr N}
    +
    \mathcal O(\nu^{-1}),
\end{equation}
where
\begin{equation}\label{eq:C-N-def}
    C_{g,j}^{\mathscr N}
    =
    \frac{j!}{\pi^{j/2}}
    \sum_{r=0}^j
    \frac{j^{j-r}}{(j-r)!}
    \sum_{\ell=0}^{3g-3}
    B_{\ell}^{(g)}
    \binom{2g-4+\ell+r}{r}.
\end{equation}
\end{corollary}

\begin{corollary}
\label{cor:large-nu-Ncal}
For fixed \(j\geq 1\) and \(1\leq g\leq 4\), as \(\nu\to\infty\),
\begin{equation}\label{eq:large-nu-Ncal}
    \frac{\mathcal{N}_g(2\nu,j)}
    {4^{\nu j}\nu^{3j/2+3g-1}}
    =
    C_{g,j}^{\mathcal N}
    +
    \mathcal O(\nu^{-1}),
\end{equation}
where
\begin{equation}\label{eq:C-Ncal-def}
    C_{g,j}^{\mathcal N}
    =
    \frac{j!}{\pi^{j/2}}
    \sum_{r=0}^j
    \frac{j^{j-r}}{(j-r)!}
    \sum_{\ell=0}^{3g-1}
    A_{\ell}^{(g)}
    \binom{2g-2+\ell+r}{r}.
\end{equation}
\end{corollary}

The genus-zero and genus-one cases fit naturally with this large-valence
asymptotics.  From \eqref{numbers sphere}, for fixed \(j\geq 1\),
\begin{equation}\label{eq:N0-asym}
    \frac{\mathscr N_0(2\nu,j)}
    {4^{\nu j}\nu^{3j/2-3}}
    =
    \frac{j^{j-3}}{\pi^{j/2}}
    +
    \mathcal O(\nu^{-1}),
    \qquad \nu\to\infty.
\end{equation}
Likewise, from \eqref{eq:e1_counts},
\begin{equation}\label{eq:N1-asym}
    \frac{\mathscr N_1(2\nu,j)}
    {4^{\nu j}\nu^{3j/2}}
    =
    \frac{j!}{12j\,\pi^{j/2}}
    \sum_{r=1}^{j}
    \frac{j^{j-r}}{(j-r)!}
    +
    \mathcal O(\nu^{-1}),
    \qquad \nu\to\infty.
\end{equation}
Thus in the regular case, the leading order asymptotic behavior in \(\nu\)  is
\[
4^{\nu j}\nu^{3j/2+3g-3}
\]
for all \(0\leq g\leq 4\).

  An explicit recursive formula in
\(g\), analogous to the recursion arising from the Painlev\'e I hierarchy, was
determined for the top coefficients \(b_{3g-3}^{(g,\nu)}\) and
\(a_{3g-1}^{(g,\nu)}\) in \cite{ErcolaniCaustics}.  This information was
sufficient for the large-\(j\) asymptotic analysis carried out in
\cite{Erc-et-al-23b}.  By contrast, the large-\(\nu\) asymptotics considered
here requires knowledge of \textit{all} coefficients \(b_\ell^{(g,\nu)}\) and
\(a_\ell^{(g,\nu)}\), not only the top ones.  This is why the explicit
determination of the full coefficient families is essential for the large-$\nu$
asymptotics.

The computations for \(g\leq 4\) suggest the following structural conjectures.

\begin{conjecture}\label{conj:b-polynomial}
For \(g\geq 5\) and \(0\leq \ell\leq 3g-3\), the coefficient
\(b_\ell^{(g,\nu)}\) is a polynomial in \(\nu\) of degree \(3g-3\).
\end{conjecture}

\begin{conjecture}\label{conj:a-polynomial}
For \(g\geq 5\) and \(0\leq \ell\leq 3g-1\), the coefficient
\(a_\ell^{(g,\nu)}\) is a polynomial in \(\nu\) of degree \(3g-1\).
\end{conjecture}

\begin{tcolorbox}[projectstyle={Custom}, colback=yellow!6, colframe=orange!70!black]
If Conjectures~\ref{conj:b-polynomial} and \ref{conj:a-polynomial} hold, then
the large-valence asymptotic formulae in
Corollaries~\ref{cor:large-nu-N} and \ref{cor:large-nu-Ncal} structurally extend to all
higher genera, with the constants defined by the leading coefficients of these
polynomials.

More generally, the method developed in this paper applies to any
fixed genus \(g\), although the computational complexity increases with \(g\).
Thus, even in the absence of a closed formula for
\[
    \mathscr N(g,2\nu,j)\equiv \mathscr N_g(2\nu,j)
\]
where \(g\), \(\nu\), and \(j\) are simultaneously treated as variables, the
framework presented in this paper, combined with the structural formulae of
Ercolani--Lega--Tippings, gives an effective procedure for computing the counts
for any prescribed \(g\), and arbitrary \(\nu\), and \(j\).\end{tcolorbox}

The main results of this paper are Theorems~\ref{thm:main-N} and
\ref{thm:main-Ncal}, together with their large-valence corollaries,
Corollaries~\ref{cor:large-nu-N} and~\ref{cor:large-nu-Ncal}.  Their proofs
rely on a collection of auxiliary fixed-genus, fixed-vertex results, which are
stated in Section~\ref{sec:fixed-gj-results}.  Before stating these auxiliary
results, we review the necessary background and recall the relevant ingredients
from random matrix theory and orthogonal polynomials.

\section{Background}

\subsection{Connections to Random Matrix Theory and Orthogonal Polynomials}
Consider the probability distribution 

	\begin{equation} \label{int1}
	\dd \mu_{nN}(M;\boldsymbol{t}) = \frac{1}{\Tilde{\mathcal{Z}}_{nN}(\boldsymbol{t})} e^{-N \mathrm{Tr}\,\mathscr{V}_{\boldsymbol{t}}(M)} \dd M,
	\end{equation}
on the space of $n \times n$ Hermitian matrices with the \textit{external field}
\begin{equation}\label{fancy V def1}
\mathscr{V}_{\boldsymbol{t}}(z) =  \frac{z^2}{2} + \sum_{j=1}^{m} t_j z^j,
\end{equation}
where $m \in 2\N$, $\boldsymbol{t}:= (t_1, \cdots, t_m)^T \in \R^m$, $t_m > 0$. In \eqref{int1} $\Tilde{\mathcal{Z}}_{nN}(\boldsymbol{t})$ is the \textit{partition function} of the matrix model and is given by
	\begin{equation} \label{int3}
	\Tilde{\mathcal{Z}}_{nN}(\boldsymbol{t}) = \int_{\mathscr{H}_n} e^{-N \mathrm{Tr}\,\mathscr{V}_{\boldsymbol{t}}(M)} \dd M.
	\end{equation}
 The eigenvalues of $M$ have the joint probablity distribution function
\begin{equation}
\frac{1}{\mathcal{Z}_{nN}(\boldsymbol{t})}\prod_{1\leq j<k\leq n} (z_j-z_k)^2\prod_{j=1}^n\exp\left[ -N \mathscr{V}_{\boldsymbol{t}}(z_j)\right],
\end{equation}
where $\mathcal{Z}_{nN}(\boldsymbol{t})$ is the eigenvalue partition function and is defined as
\begin{equation}\label{u free energy}
    \mathcal{Z}_{nN}(\boldsymbol{t}) = \int_{-\infty}^\infty \ldots \int_{-\infty}^\infty \prod_{1\leq j<k\leq n} (z_j-z_k)^2\prod_{j=1}^n\exp\left[ -N  \mathscr{V}_{\boldsymbol{t}}(z_j)\right]\dd z_1 \ldots \dd z_n .
\end{equation} 
The connection of matrix models to map enumeration on Riemann surfaces lies in the asymptotic properties of the \textit{free energy}:
\begin{equation}\label{free energy}
    \mathcal{F}_{nN}(\boldsymbol{t}) :=  \frac{1}{n^2} \ln\frac{\mathcal{Z}_{nN}(\boldsymbol{t})}{\mathcal{Z}_{nN}(\boldsymbol{0})}.
\end{equation}
For any given $T>0$ and $\ga>0$, define
\[ \T(T,\ga) := \{ \boldsymbol{t} \in \R^m \ : \ |\boldsymbol{t}| \leq T, t_{m}>\ga \sum^{m-1}_{j=1}|t_j|  \}. \]
Let $x:=n/N$. It turns out that there exist $T>0$ and $\ga>0$ so that for all $\boldsymbol{t} \in \T(T,\ga)$ the free energy $\mathcal{F}_{nN}(\boldsymbol{t})$ admits an asymptotic expansion in powers of $N^{-2}$
\begin{equation}\label{top exp free energy}
    \mathcal{F}_{nN} (\boldsymbol{t}) = \sum_{g=0}^\infty \frac{f_{2g}(x,\boldsymbol{t})}{N^{2g}}, \qasq N \to \infty,
\end{equation}
in some neighborhood of $x=1$. The above expansion was established in \cite{EM02} for $\mathcal{F}_{NN}(\boldsymbol{t})$ (i.e. when $x=1$) and its existence was later generalized to be valid in a neighborhood of $x=1$ in \cite{Ercolani-McLaughlin-Pierce}. The asymptotic expansion \eqref{top exp free energy} is referred to as the \textit{topological expansion} for the associated matrix model, since for each $g \in \N$, the coefficient $f_{2g}(x,\boldsymbol{t})$ is a combinatorial generating function for graphs embedded on a Riemann surface of genus $g$. To this end, we would like to highlight the following result.
\begin{theorem}\label{thmEM02}\cite{EM02}
Let $\mathscr{N}_g(n_1,\cdots,n_{m})$ denote the number of mixed-valence\footnote{This notation should not be confused with $\mathscr{N}_g(\mu,j)$ introduced earlier to denote the number of \textit{regular} \(\mu\)-valent connected labeled graphs with
\(j\)  vertices and minimal embedding genus \(g\). In the notation of Theorem \ref{thmEM02}, $\mathscr{N}_g(\mu,j)$ is denoted as $\mathscr{N}_g(n_{\mu})$, where $n_{\mu}=j$ denotes the number of vertices and the subscript $\mu$ denotes the valence.}
 labeled connected graphs with $n_k$ number of $k$-valent vertices which can be embedded on a Riemann surface of minimal genus $g$. Then
    \begin{equation}
        f_{2g}(1, \boldsymbol{t}) = \sum_{n_k \geq 1} \frac{\mathscr{N}_g(n_1,\cdots,n_{m})}{n_1! \cdots n_m!}(-t_1)^{n_1} \cdots (-t_m)^{n_m}.
    \end{equation}
\end{theorem} 
The $n$-fold integral \eqref{u free energy} is, up to a factor of $n!$, the $n \times n$ Hankel determinant $D_n[w_{\boldsymbol{t}}] \equiv \det \{w_{j+k}\}_{0\leq j,k \leq n-1} $ associated with the weight $w_{\boldsymbol{t}}(x)=\exp(-N \mathscr{V}_{\boldsymbol{t}}(x))$, where $w_{j}$ is the $j$-th moment of the weight $w_{\boldsymbol{t}}(x)$. This is known as the Heine's formula for Hankel determinants \cite{SzegoOP} and relates the partition function, and thus the free energy, to the system of orthogonal polynomials on the real line associated with the weight $\exp(-N \mathscr{V}_{\boldsymbol{t}}(z))$:
\begin{equation}\label{OP}
    \int_{\R} \mathcal{P}_n(z;\boldsymbol{t}) \mathcal{P}_m(z;\boldsymbol{t}) \exp(-N \mathscr{V}_{\boldsymbol{t}}(z)) \dd z= h_n(\boldsymbol{t}) \de_{nm}, 
\end{equation}
where $h_n(\boldsymbol{t}) = D_{n+1}[w_{\boldsymbol{t}}]/D_n[w_{\boldsymbol{t}}]$, and $\de_{nm}$ is the Kronecker delta function. Orthogonal polynomials on the real line satisfy the three-term recurrence equation (see e.g. \cite{BL}):
\begin{equation}\label{OP rec}
    z\mathcal{P}_{n}(z) = \mathcal{P}_{n+1}(z) + \be_n \mathcal{P}_{n}(z) + \mathcal{R}_n\mathcal{P}_{n-1}(z).
\end{equation}

The relationship between this system of orthogonal polynomials and the partition function \(\mathcal{Z}_{nN}(\boldsymbol{t})\) is as follows: An orthogonal polynomial of degree \(n\) exists and is unique if the partition function \(\mathcal{Z}_{nN}(\boldsymbol{t})\), or equivalently, the \(n \times n\) Hankel determinant $D_n[w_{\boldsymbol{t}}]$, is nonzero. The existence of such a polynomial simply follows from the explicit formula:

\begin{equation}
\mathcal{P}_n(z;\boldsymbol{t}) = \frac{1}{D_n[w_{\boldsymbol{t}}]} \det \begin{pmatrix}
w_0 & w_1 & \cdots & w_{n-1} & w_{n} \\
w_1 & w_2 & \cdots  & w_{n} & w_{n+1} \\
\vdots & \vdots & \reflectbox{$\ddots$} & \vdots & \vdots \\
w_{n-1} & w_{n} & \cdots  & w_{2n-2} & w_{2n-1} \\
1 & z & \cdots & z^{n-1} & z^n
\end{pmatrix}.
\end{equation}
Uniqueness of these orthogonal polynomials follows from the fact that the coefficients of \(\mathcal{P}_n(z;\boldsymbol{t})\), expressed in the form 
$
\mathcal{P}_n(z;\boldsymbol{t}) = z^n + \sum_{j=0}^{n-1} a_j(\boldsymbol{t}) z^j,
$
are determined by a linear system 
$
H_n[w_{\boldsymbol{t}}] \boldsymbol{a} = \boldsymbol{b},
$ where $H_n[w_{\boldsymbol{t}}]$ is the $n \times n$ Hankel matrix, and $\boldsymbol{a}=(a_0(\boldsymbol{t}), \cdots,a_{n-1}(\boldsymbol{t}))^T$. Since this system can be inverted when the Hankel determinant is nonzero, the orthogonal polynomial $\mathcal{P}_n(z;\boldsymbol{t})$ is uniquely defined.

The Fokas-Its-Kitaev Riemann-Hilbert problem \cite{FIK} provides an effective analytical framework to obtain precise asymptotic information about orthogonal polynomials $\mathcal{P}_n(z;\boldsymbol{t})$ and thus the associated Hankel determinants. Since the partition function \eqref{u free energy} is equal to the Hankel determinant (up to a factor of $n!$) by the Heine formula, it can be used to obtain the asymptotics for the free energy \eqref{free energy}. For such asymptotic analysis of Hankel determinants see e.g. \cite{Charlier, CharlierGharakhloo} and references therein. 

Alternatively, without obtaining precise asymptotics for the partition function \eqref{u free energy} itself, one can find asymptotics of the free energy by employing the \textit{string} and \textit{Toda} equations which are difference and differential equations involving the recurrence coefficients of the orthogonal polynomials (see e.g. \cite{BGM}). In other words, using Toda and string equations, establishing asymptotic expansions like \eqref{top exp free energy} for the recurrence coefficients from the Riemann-Hilbert method:
\begin{equation}\label{Asymp ga_n^2}
\mathcal{R}_n(x;\boldsymbol{t}) \sim \sum^{\infty}_{g=0} \frac{r_{2g}(x;\boldsymbol{t})}{N^{2g}},
		\end{equation}
will in turn yield the topological expansion for the free energy.

If one allows the vector $\boldsymbol{t}$ to be complex, for the cubic and quartic potentials the validity of the topological expansion in certain subsets of the complex plane have been shown respectively in \cite{BDY} and \cite{BGM}. For other complex potentials the existence of the topological expansion \eqref{top exp free energy} is not known, although aspects of the associated system of orthogonal polynomials and their equilibrium measures have been studied in the literature, e.g. in \cite{BBGMT, DHK, KS, HKL}.

\subsection{Map enumeration results in the literature}To contextualize the findings of this paper, we review key results in the literature on computing the numbers $\mathscr{N}_g(\mu,j)$ and $\mathcal{N}_g(\mu,j)$. To the best of our knowledge, no known results exist for these numbers in the context of mixed-valence graphs. However, several results are available for regular graphs. We outline these results in the following subsections.
\subsubsection{3-valent graphs.} In \cite{BD}, Bleher and Dea\~{n}o found closed form formulae for $\mathscr{N}_0(3,2j)$ and $\mathscr{N}_1(3,2j)$, respectively for 3-valent graphs embedded on a Riemann surface of genus $0$ and $1$. For the sphere these numbers are described by
      \begin{equation}\label{bleher cubic sphere}
          \mathscr{N}_0(3,2j) = \frac{72^j \Ga(\frac{3j}{2})(2j)!}{2\Ga(j+3)\Ga(\frac{j}{2}+1)},
      \end{equation}
while for the torus the numbers are expressed in terms of a ${}_3F_2$ hypergeometric function:
\hypergeometricsetup{
  fences=parens,
  separator={,},
  divider=bar,
}
 \begin{equation}\label{bleher cubic torus}
          \mathscr{N}_1(3,2j) = \frac{5 \cdot 72^j \Ga(\frac{3j}{2})(2j)!}{48 (3j+2) \Ga(j+1)\Ga(\frac{j}{2}+1)}\pFq{3}{2}{-j+1,2,6}{5,-\frac{3j}{2}+1}{\frac{3}{2}}.
      \end{equation}
 Notice that there are no regular odd-valent graphs with an odd number of vertices. In \cite{Erc-et-al-multiple-scale}, tables provide counts of 3-valent graphs on surfaces of genus $g = 0$, $g = 1$, and $g = 2$, with the number of vertices ranging over even integers from 2 to 30. Similarly, \cite{Dubrovin-Yang} contains numerical tables for the number of $3$-valent graphs embedded on surfaces of genus $g = 0$ through $g = 5$, where the number of vertices range over even integers from 2 to 12.

 \subsubsection{4-valent graphs.}  The seminal work \cite{BIZ} of Bessis  Itzykson, and Zuber which was the first to discover the profound connection of matrix models and graph enumeration problems, has explicit formulae for the coefficients $f_0$, $f_2$, and $f_4$ for the case $\nu=2$. 

 There are a number of papers in which numerical tables for $\mathscr{N}_g(4,j)$ are calculated for selected choices of $g$ and $j$  \cite{pierce2006} , \cite{Dubrovin-Yang}, and \cite{Erc-et-al-multiple-scale}.

 In \cite{BGM}, among other things, the computations of $f_0$, $f_2$, and $f_4$ from \cite{BIZ} were rigorously verified and a recursive pathway to compute any $f_{2g}$ (and thus any $\mathscr{N}_g(4,j)$) was introduced. In particular, this led to closed form formulae for the numbers $\mathscr{N}_{g}(4,j)$, for genus $g=0,1,2$ and $3$. They found simple formulae for $\mathscr{N}_0(4,j)$ and $\mathscr{N}_1(4,j)$ which are indeed special cases of \eqref{numbers sphere} and \eqref{eq:e1_counts} when $\nu=2$. Formulae for $\mathscr{N}_2(4,j)$ and $\mathscr{N}_3(4,j)$ were also determined in terms of simple factorials, for example, for $\mathscr{N}_2(4,j)$ they found:

	\begin{equation}\label{N j+1 2}
	\mathscr{N}_{2}(4,j+1) = \frac{12^j\left( 2j+2 \right)!(28j+37)}{360(j+1)(j-1)!}-13j(j+1)j!48^{j-1}, \qquad j \in \N,
	\end{equation}
	where $\mathscr{N}_{2}(4,1)=0$ (which is a consequence of the fact that all labeled 4-valent graphs with one vertex are realizable on the sphere and the torus, in fact there are three such graphs). Later in \cite{Erc-et-al-23b}, similar explicit closed form formulae were obtained for $\mathscr{N}_g(4,j)$ for $4 \leq g \leq 7$ (and their two-legged counterparts for $1\leq g \leq 7$). These formulae were also in terms of elementary arithmetic functions and could readily be extended to higher genus using the methods presented in their paper with increasing computational effort. Note that in \cite{Erc-et-al-23b} these formulae were derived using equations \eqref{eq:N-ELT-hypergeom} and \eqref{eq:Ncal-ELT-hypergeom} in the special case that $\nu=2$.

    \subsubsection{General even-valent graphs} In the case of even-valent potentials \begin{equation}\label{fancy V def 2}
    \mathscr{V}(z;u) =  \frac{z^2}{2} + u  \frac{z^{2\nu}}{2\nu}, \qquad u>0,
\end{equation} Ercolani found structural formulae for $f_{2g}$ and $r_{2g}$ for any $g \geq 2$ and any $\nu$ \cite{ErcolaniCaustics}.  In \cite{ErcolaniCaustics}, it was shown that
    \begin{equation}\label{PQ}
    r_{2g}(r_0) = \frac{r_0(r_0-1)P_{3g-2}(r_0)}{(\nu-(\nu-1)r_0)^{5g-1}},     \qandq f_{2g}(r_0) = \frac{(r_0-1)Q_{d(g)}(r_0)}{(\nu-(\nu-1)r_0)^{o(g)}},\end{equation}
    where $P_{m}$ (and $Q_{m}$) is a polynomial of degree $m$ in $r_0$, the leading (constant) term in the expansion of \eqref{Asymp ga_n^2}. The coefficients of $P_m$ and $Q_m$ are rational functions of $\nu$ over the rational numbers $\Q$. The exponent $o(g)$ and the degree $d(g)$ are non-negative integers to be determined. It turns out, for the potential \eqref{fancy V def 2}, the leading (constant) term $r_0$ in the expansion \eqref{Asymp ga_n^2} is a solution of the algebraic equation 
    \begin{equation}\label{r_0 string}
        r_0+ c_{\nu}x^{\nu-1}t_{2\nu}r_0^{\nu}=1, \qquad c_{\nu} := 2 \nu \binom{2\nu-1}{\nu-1}.
    \end{equation}

    In \cite{Ercolani-McLaughlin-Pierce}, Ercolani, McLaughlin and Pierce found the closed form formula \eqref{numbers sphere} for all planar even-valent graphs, that is, the number of $2\nu$-valent graphs on the sphere, where the number of vertices $ j$ and the valence $\nu$ are general.   In view of Theorem \ref{thmEM02}, this was obtained from the following explicit expression for $f_0$
    \begin{equation}\label{f0 to r0}
f_0(x,t_{2\nu})=\eta\left(r_0-1\right)(r_0-\kappa)+\frac{1}{2}\log(r_0),  
       \end{equation} with \begin{equation} \label{eta and kappa}
           \eta := \frac{(\nu-1)^2}{4\nu(\nu+1)} , \qandq \kappa:= \frac{3(\nu+1)}{\nu-1},
       \end{equation}  where $r_0 \equiv r_0(x,t_{2\nu})$ is the solution of the algbraic equation \eqref{r_0 string}. To obtain \eqref{numbers sphere} from \eqref{f0 to r0}, the authors used residue calculations to compute the Taylor coefficients of $r_0$, $r^2_0$ and $\log(r_0)$ \cite{Ercolani-McLaughlin-Pierce}. Moreover in \cite{Ercolani-McLaughlin-Pierce}, essential calculations for expressing $r_2$, $r_4$, and $r_6$ in terms of $r_0$ were performed, and equations for expressing $f_2$ and $f_4$ in terms of $r_0$ were also derived\footnote{For formulae expressing $r_2$, $r_4$, and $r_6$ in terms of $r_0$, see Sections 5.3, 5.4, and 5.5 of \cite{Ercolani-McLaughlin-Pierce}, respectively. For formulae expressing $f_2$ and $f_4$ in terms of $r_0$, see Sections 5.8 and 5.9 of \cite{Ercolani-McLaughlin-Pierce}, respectively.}. In \cite{Erc-et-al-23b}, Ercolani, Lega, and Tippings derived the torus analogue of \eqref{numbers sphere}, namely \eqref{eq:e1_counts}, using the results of \cite{Ercolani-McLaughlin-Pierce}. Deriving the analogs of this explicit formula for general $\nu$ and $j$ to higher genus, requires a significant amount of algebraic computation and the evaluation of integration constants. These constants are evaluated by calculating combinatorial counts of graphs by another means for fixed $g$, $j$ and $\nu$, and comparing results\footnote{See \cite[Section 5.10]{Ercolani-McLaughlin-Pierce} for such an evaluation for $g\leq 3$.}. This process highlights the difficulty of extending the method used in \cite{Ercolani-McLaughlin-Pierce} to higher genus. Other notable work in this area includes \cite{WatersSoln} who carried out similar calculations to \cite{Ercolani-McLaughlin-Pierce} but for the odd valence case. 

 We would also like to highlight two works which provided numerical tables for $\mathscr{N}_g(\mu,j)$ for valences higher than four, however, closed form formulae were not produced.  In \cite{pierce2006} V. Pierce provided numerical tables for 1-vertex $2\nu$-valent graphs for $0 \leq g \leq 5$ and $2 \leq \nu \leq 10 $ and also numerical tables for 2-vertex $\nu$-valent graphs for $0 \leq g \leq 4$ and $3 \leq \nu \leq 10$. Later, Dubrovin and Yang in \cite{Dubrovin-Yang} for $0 \leq g \leq 5$ provided numerical tables for  a) $\mathscr{N}_g(5,2j)$, $1\leq j\leq 5$, b) $\mathscr{N}_g(6,j)$, $1\leq j\leq 7$,   c) $\mathscr{N}_g(7,2j)$, $1\leq j\leq 4$, d) $\mathscr{N}_g(8,j)$, $1\leq j\leq 5$. As far as we know, the works \cite{pierce2006} and \cite{Dubrovin-Yang} are among the few works that provide actual counts for $g\geq2$ and $\nu>2$.  

For $g \geq 2$, no explicit formulae analogous to \eqref{numbers sphere} and \eqref{eq:e1_counts} exist in the literature. Substantial progress was reported in \cite{Erc-et-al-23b}, where ${\mathscr N}_{g}(2\nu,j)$ and ${\mathcal N}_{g}(2\nu,j)$ were expressed as linear combinations \eqref{eq:N-ELT-hypergeom} and \eqref{eq:Ncal-ELT-hypergeom} of ${}_2F_1$ hypergeometric functions. However, the coefficients in these combinations were left undetermined. Theorems \ref{thm:main-N} and \ref{thm:main-Ncal} of this paper determine these coefficients explicitly for $g=2,3,$ and $4$, while providing a roadmap for obtaining explicit results for all genera $g \geq 5$, requiring only additional computational effort.

\begin{remark}\normalfont
Although the present work is restricted to regular even-valent potentials, we
mention a complementary and more general framework developed in
\cite{Watersthesis,WatersSoln,EW2022}. For general polynomial potentials,
possibly involving several nonzero time parameters, the \(x\)-dependence of
the large-\(N\) expansions of the recurrence coefficients \(\be_n\) and
\(\mathcal{R}_n\) in \eqref{OP rec} can be used to obtain
\emph{valence-independent} representations of the generating functions.
Here, valence independence means that these representations contain no
explicit dependence on the potential or on its time parameters.

Let \(u_0(x;\boldsymbol{t})\) and \(r_0(x;\boldsymbol{t})\) denote,
respectively, the leading-order terms in the large-\(N\) expansions of
\(\be_n\) and \(\mathcal{R}_n\). The valence-independent representations are
expressed in terms of \(u_0\), \(r_0\), and their derivatives with respect to
the 't~Hooft parameter \(x=n/N\). For example, the genus-one generating function
is given by
\begin{equation}\label{F1Formula}
f_{2}(x;\boldsymbol{t})
=
-\frac{1}{12}\log\left(\frac{r_0}{x}\right)
+\frac{1}{24}\log\left(
\left(\frac{\partial r_0}{\partial x}\right)^2
-
r_0\left(\frac{\partial u_0}{\partial x}\right)^2
\right),
\end{equation}
see \cite[Equation~(81)]{WatersSoln}. A corresponding valence-independent
expression for the genus-two generating function is given in
\cite[Section~3.5]{WatersSoln}.

The higher \(x\)-derivatives appearing in these representations can be
recursively eliminated by means of the so-called \emph{unwinding identities}.
 Eliminating
the \(x\)-derivatives makes the dependence on the potential, and hence on the
valences, explicit. Nevertheless, the resulting reduced expressions are
rational functions of \(u_0\) and \(r_0\) at every genus; see
\cite[Theorem~2]{EW2022}. This is the ``general-potential'' analogue of the
rationality in \(r_0\) displayed in \eqref{PQ} for regular even-valent
potentials. In the even-potential setting considered in the present paper,
symmetry gives \(\be_n=0\), and hence \(u_0=0\), so that only \(r_0\)
remains nontrivial.
\end{remark}

\subsubsection{Large-$j$ and large-$\nu$ Asymptotics} 

There are a number of results concerning the asymptotics of $\mathscr{N}_g(\mu,j)$ as $j\to\infty$. In \cite{BD} and \cite{BGM}, respectively, the leading order asymptotics of $\mathscr{N}_g(3,j)$ and $\mathscr{N}_g(4,j)$ were derived for graphs embedded on a Riemann surface of \textit{arbitrary} genus $g \in \N$, as the number of vertices tends to infinity. For $3$-valent regular graphs it was found in \cite{BD} that
 \begin{equation}\label{3 valent N j g large j asymptotics}
		\mathscr{N}_g(3,2j) = \mathcal{C}_g  \left( \frac{324}{\sqrt{3}}  \right)^{j}  (j)^{\frac{5g-7}{2}} (2j)! \left(1+O(j^{-1/2}) \right), \qquad j \to \infty. 
		\end{equation}
For $4$-valent regular graphs with $j$ vertices it was found in \cite{BGM} that
 \begin{equation}\label{N j g large j asymptotics}
		\mathscr{N}_g(4,j) = \mathcal{K}_g 48^{j}  (j)^{\frac{5g-7}{2}}(j)!\left(1+O(j^{-1/2}) \right), \qquad j \to \infty. 
		\end{equation}
        Additionally, descriptions of the constants $\mathcal{K}_g$ and $\mathcal{C}_g$ in terms of the asymptotics of the solutions
		$u(\tau)$ to the Painlev\'e I equation $	u''(\tau)=6u^2(\tau)+\tau$ were provided in \cite{BD} and \cite{BGM}. 
Recently, Ercolani and Waters~\cite{EW2022} obtained the leading-order
large-vertex asymptotics for graphs of arbitrary fixed genus \(g\in\N\)
and any fixed odd valence \(\mu\). More precisely, they showed that
\begin{equation}\label{odd valent N j g large j asymptotics}
    \mathscr{N}_g(2\nu-1,n)
    \sim
    \mathcal{J}_g
    \bigl(t_c^{1/2}\bigr)^n
    n^{\frac{5g-7}{2}}
    n!,
    \qquad
    n\to\infty,
\end{equation}
where \(n\) is even and \(t_c\) is the radius of convergence of the
Taylor expansion of \(f_{2g}(t_\nu)\).

In~\cite{EW2022}, and with a small correction
in~\cite{Erc-et-al-23b}, the corresponding large-vertex asymptotics for
graphs of fixed even valence \(2\nu\) were also established:
\begin{equation}\label{even valent N j g large j asymptotics}
    \mathscr{N}_g(2\nu,n)
    \sim
    \mathcal{K}_g^{(\nu)}
    \bigl(s_c^{-1}\bigr)^n
    n^{\frac{5g-7}{2}}
    n!,
    \qquad
    n\to\infty,
\end{equation}
where
\begin{equation}\label{def:large-vertex-constant}
    \mathcal{K}_g^{(\nu)}
    :=
    \frac{b_{3g-3}^{(g,\nu)}}
    {\left(\sqrt{\frac{2\nu}{\nu-1}}\right)^{5g-5}
    \Gamma\left(\frac{5g-5}{2}\right)}
\qandq 
    s_c
    :=
    \frac{(\nu-1)^{\nu-1}}
    {c_\nu\nu^\nu}.
\end{equation}
An analogous large-vertex asymptotic formula was also obtained for the
two-legged case.

The results of the present paper address the complementary
\emph{large-valence} regime. Namely, we obtain
leading-order asymptotics for \(\mathscr{N}_g(2\nu,j)\) as
\(\nu\to\infty\), with the number of vertices \(j\) fixed; see
    Corollaries~\ref{cor:large-nu-N} and~\ref{cor:large-nu-Ncal}. To the best of our knowledge, these asymptotic results are new. 
\begin{tcolorbox}[projectstyle={Custom}, colback=yellow!6, colframe=orange!70!black]    
    Note that in contrast to $\lim_{j\to\infty}\mathscr{N}_g(2\nu,j)$, understanding the leading order behavior of $\lim_{\nu\to\infty}\mathscr{N}_g(2\nu,j)$ requires knowledge of \textit{all} the coefficients $\{b_{\ell}^{(g,\nu)}\}_{\ell=0}^{3g-3}$, not just $\ell = 3g-3$ (the case $\ell = 3g-3$ is discussed in \cite{ErcolaniCaustics}). Determining all these coefficients is the task we undertake in this paper, at least for small values of $g$. It would be interesting to investigate whether the constants
\(C_{g,j}^{\mathscr N}\) (or \(C_{g,j}^{\mathcal N}\)) admit generating
functions analogous to the way in which
the constants \(\mathcal{K}_g\) are encoded in the asymptotics of a
Painlev\'e~I solution.
\end{tcolorbox}

\section{Auxiliary results and fixed-\texorpdfstring{\((g,j)\)}{(g,j)} formulae}
\label{sec:fixed-gj-results}

The main results of this paper are Theorems~\ref{thm:main-N} and
\ref{thm:main-Ncal}.  Their proof relies on a collection of auxiliary results,
which we provide in this section. These results are proved in Sections \ref{determining rn section} and \ref{determining free section}.

We first recall the relation between the topological expansion and the
enumeration of regular and two-legged maps.  By Theorem~\ref{thmEM02}, for
regular \(2\nu\)-valent graphs one has
\begin{equation}\label{numbers and free energy}
    (-2\nu)^j
    \frac{\partial^j}{\partial u^j}
    f_{2g}(1,u)\bigg|_{u=0}
    =
    \mathscr{N}_g(2\nu,j).
\end{equation}
While for two-legged \(2\nu\)-valent graphs one has
\begin{equation}\label{2legged counts}
    (-2\nu)^j
    \frac{\partial^j}{\partial u^j}
    r_{2g}(1,u)\bigg|_{u=0}
    =
    \mathcal{N}_g(2\nu,j).
\end{equation}
Here \(f_{2g}\) and \(r_{2g}\) are the genus-\(g\)  coefficients of the free
energy and recurrence coefficient expansions, respectively.

\subsection{Taylor coefficients of $r_{2g}$ and $f_{2g}$}
\label{new results: general even valence}

In this section we focus on results concerning the potential \eqref{fancy V def 2}, recall that $t_{2\nu} \equiv u/2\nu$ when compared to \eqref{fancy V def1}. The analyticity of $r_{2g}(x;\boldsymbol{t})$ and $f_{2g}(x;\boldsymbol{t})$ in a neighborhood of $(1;\boldsymbol{0})$ was established in \cite{Ercolani-McLaughlin-Pierce} for general even-degree potentials \eqref{fancy V def1}. The following theorems state that the Taylor coefficients of $r_{2g}$ and $f_{2g}$ are in fact monomials in the \textit{'t Hooft parameter} $x=n/N$.

\begin{theorem}
\label{recurrence theorem}
Consider the asymptotic expansion \eqref{Asymp ga_n^2} for the recurrence
coefficients of orthogonal polynomials with respect to the weight
\(e^{-N\mathscr V(z;u)}\), where \(\mathscr V\) is given by
\eqref{fancy V def 2}.  Let \(\beta_{2g,j}(x)\) denote the Taylor coefficients
of \(r_{2g}(x;u)\):
\begin{equation}\label{eq:beta-Taylor}
    r_{2g}(x;u)
    =
    \sum_{j=0}^{\infty}\beta_{2g,j}(x)u^j.
\end{equation}
Then
\begin{equation}\label{eq:beta-monomial}
    \beta_{2g,j}(x)
    =
    c_{2g,j}x^{\mathscr D},
    \qquad
    \mathscr D=j(\nu-1)+1-2g.
\end{equation}
If \(\mathscr D<0\), then \(\beta_{2g,j}(x)=c_{2g,j}=0\).
\end{theorem}

\begin{remark}\normalfont
As described in Section \ref{determining rn section}, we solve a hierarchy of
inhomogeneous differential equations to determine \(\beta_{2G,J}(x)\), in which
the coefficients \(\beta_{2g,j}(x)\) with \(g<G\) and \(j<J\) appear in the
inhomogeneous term. The significance of Theorem \ref{recurrence theorem} is
that it shows that the particular solution to these differential equations is a
simple monomial. This fact allows us to readily determine the coefficients
\(\beta_{2g,j}(x)\) and consequently \(\alpha_{2g,j}(x)\).
\end{remark}

As described in Section \ref{determining free section},
Theorem~\ref{recurrence theorem} leads to the following structural result for
\(f_{2g}(x;u)\).

\begin{theorem}
\label{free energy theorem}
Consider the asymptotic expansion \eqref{top exp free energy} for the free
energy \eqref{free energy} with respect to the weight
\(e^{-N\mathscr V(z;u)}\), where \(\mathscr V\) is given by
\eqref{fancy V def 2}.  Let \(\alpha_{2g,j}(x)\) denote the Taylor coefficients
of \(f_{2g}(x;u)\):
\begin{equation}\label{eq:alpha-Taylor}
    f_{2g}(x;u)
    =
    \sum_{j=0}^{\infty}\alpha_{2g,j}(x)u^j.
\end{equation}
Then
\begin{equation}\label{eq:alpha-monomial}
    \alpha_{2g,j}(x)
    =
    \widetilde c_{2g,j}x^{\widetilde{\mathscr D}},
    \qquad
    \widetilde{\mathscr D}=j(\nu-1)-2g.
\end{equation}
If \(\widetilde{\mathscr D}<-2\), then
\(\alpha_{2g,j}(x)=\widetilde c_{2g,j}=0\).
\end{theorem}

\begin{remark}\normalfont
The process for explicitly determining \(c_{2g,j}\) and
\(\widetilde c_{2g,j}\) using Equation \eqref{eq:volterra_lattice} is detailed
for the first few values of \(g\) and \(j\) at the end of
Sections \ref{determining rn section} and \ref{determining free section}.
Using the arguments presented in this paper, \(c_{2g,j}\) and
\(\widetilde c_{2g,j}\) can be determined for arbitrary \(g\) and \(j\), with
increasing computational effort as \(j\) and \(g\) become large.
\end{remark}

\begin{remark}\normalfont
In this paper, Theorems \ref{recurrence theorem} and
\ref{free energy theorem} are proven by analyzing the string equation for
general \(\nu\). Alternatively, they can also be derived using scaling
relations established in \cite[Lemma 11]{Watersthesis} for general even-degree
potentials; see also \cite[Equations (84)--(85)]{WatersSoln}.
\end{remark}

\subsection{Fixed-\texorpdfstring{\((g,j)\)}{(g,j)} formulae}
\label{subsec:fixed-gj-combinatorial-formulae}

We now present the first combinatorial results of this paper.  In
Theorems~\ref{thm 2legged combinatorics} and~\ref{thm combinatorics}, we give
explicit formulae in \(\nu\) for \(\mathcal{N}_g(2\nu,j)\) and
\(\mathscr{N}_g(2\nu,j)\), respectively, for \(1\leq j\leq 3\) and
\(0\leq g\leq 5\).  These formulae involve powers of the quantity \(c_\nu\)
defined in \eqref{r_0 string}, which is related to the Catalan number \(C_\nu\) \cite{oeis_A000108}
by\footnote{For the reader's convenience to numerically interpret the results of Theorems \ref{thm 2legged combinatorics} and \ref{thm combinatorics}, we recall that the first ten values of
\(\{C_\nu\}_{\nu=1}^{\infty}\) are
\[
    1,\ 2,\ 5,\ 14,\ 42,\ 132,\ 429,\ 1430,\ 4862,\ 16796,
\]
and the corresponding first ten values of \(\{c_\nu\}_{\nu=1}^{\infty}\) are
\[
    2,\ 12,\ 60,\ 280,\ 1260,\ 5544,\ 24024,\ 102960,\ 437580,\ 1847560.
\]}
\begin{equation}\label{c to C}
    c_{\nu}=\nu(\nu+1)C_{\nu}.
\end{equation}

Note that the factor $c^j_{\nu}$ appears in all enumerative formulae discussed in the introduction and elsewhere in this paper.

The polynomials \(Q_{g,j}(\nu)\) and \(S_{g,j}(\nu)\) in
Theorems~\ref{thm 2legged combinatorics} and~\ref{thm combinatorics} are the
principal ingredients in the fixed-\((g,j)\) formulae: once the
factor \(c_\nu^j\), or the \(j\)-th power of the Catalan number \(C_\nu^j\), is factored out, these polynomials
contain the remaining enumerative information.  To preserve the flow of the
text, we state the resulting fixed-\((g,j)\) enumeration formulae here and
provide the lengthy lists of the \textit{explicit} polynomials \(Q_{g,j}(\nu)\) and
\(S_{g,j}(\nu)\) themselves in Section~\ref{subsec:QS-polynomial-tables}.

\begin{theorem}
\label{thm 2legged combinatorics}
Let \(\mathcal N_g(2\nu,j)\) denote the number of connected two-legged
\(2\nu\)-valent labeled graphs with \(j\) vertices that can be embedded on a
compact Riemann surface of minimal genus \(g\).  For
\(0\leq g\leq 5\) and \(1\leq j\leq 3\), one has
\begin{equation}\label{eq:Qgj-form}
    \mathcal N_g(2\nu,j)
    =
    c_\nu^j Q_{g,j}(\nu),
\end{equation}
where \(Q_{g,j}(\nu)\) is an explicit polynomial in \(\nu\) of degree $3g+j-1$.  The polynomials
\(Q_{g,j}(\nu)\) for \(0\leq g\leq 5\) and \(1\leq j\leq 3\) are explicitly listed in
Section~\ref{subsec:Qgj-polynomials}.
\end{theorem}

Theorem~\ref{thm 2legged combinatorics}, together with
\eqref{eq:Ncal-ELT-hypergeom} and the hypergeometric simplification carried out
in Appendix~\ref{sect: dfi form}, yields Theorem~\ref{thm:main-Ncal}, our main
result concerning \(\mathcal N_g(2\nu,j)\).  The large-valence asymptotics then
follow from Theorem~\ref{thm:main-Ncal}. Figure \ref{fig2} is an illustration of Theorems \ref{thm:main-Ncal} and \ref{thm 2legged combinatorics} for the choices $(\nu,g,j) \in \{(3,0,1),(3,1,1)\}$.

\begin{figure}[!htp]
	\centering
	\begin{minipage}{0.32\textwidth}
		\centering
\includegraphics[scale=0.3]{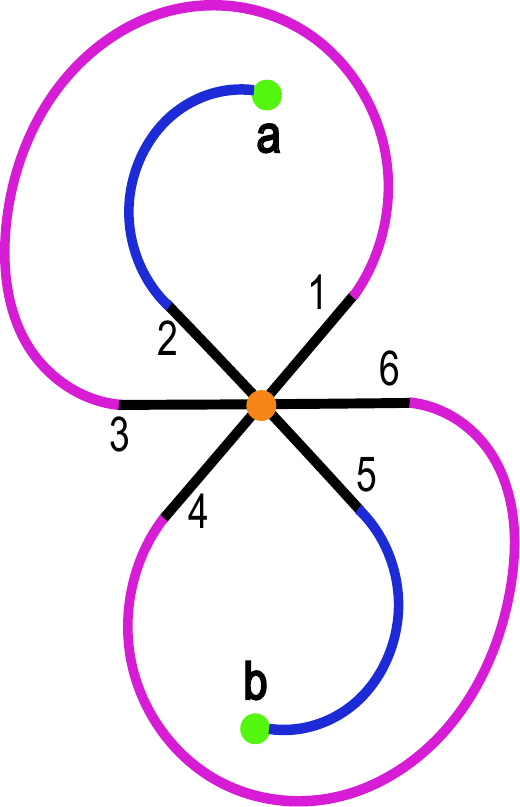}
     \subcaption{}   
	\end{minipage}\hfil 
	\begin{minipage}{0.32\textwidth}
		\centering
		\includegraphics[scale=0.34]{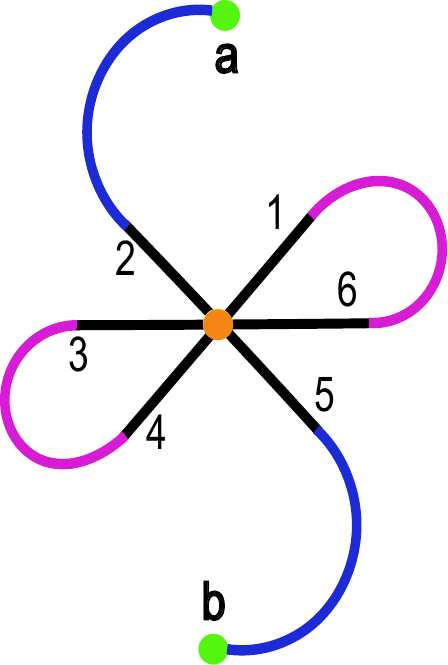}
    \subcaption{}      
	\end{minipage} \hfil
	\begin{minipage}{0.32\textwidth}
		\centering
		
        \includegraphics[width=\textwidth, alt={Three subfigures showing 2-legged graphs with one 6-valent vertex: (a) choice 3↔1, 4↔6 on the sphere; (b) choice 3↔4, 1↔6 on the sphere; (c) choice 3↔6, 4↔1, not on the sphere but on the torus.}]{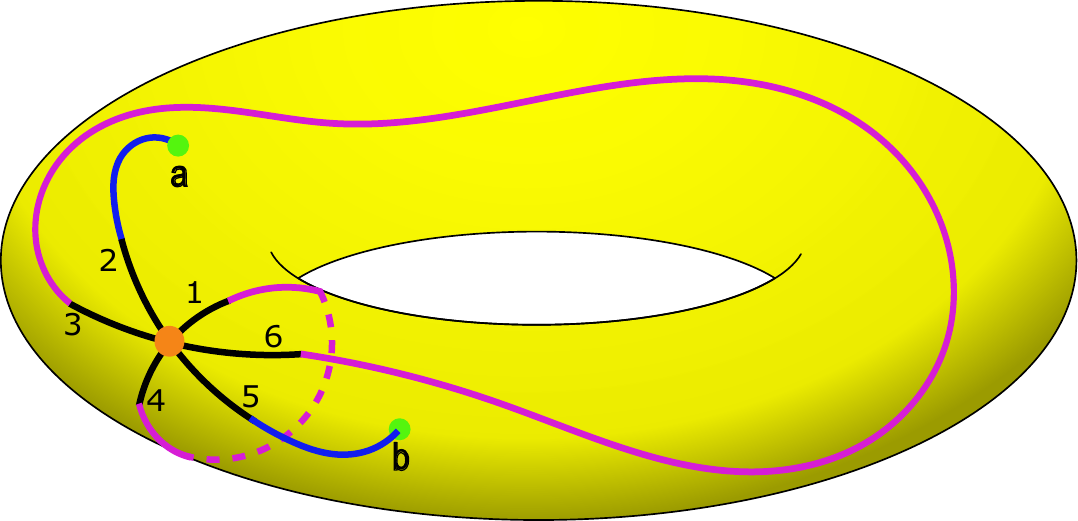}
    \subcaption{} 
	\end{minipage} \hfil
	\caption{The 2-legged graphs with one 6-valent vertex with the choice $a \leftrightarrow 2$ and $b \leftrightarrow 5$. This leaves three destinations for the edge labeled by $3$:  (a) an illustration of the choice $3 \leftrightarrow 1$ and $4 \leftrightarrow 6$ which can be embedded on the sphere,  (b) an illustration of the choice $3 \leftrightarrow 4$ and $1 \leftrightarrow 6$ which can also be embedded on the sphere, and (c) an illustration of the choice $3 \leftrightarrow 6$ and $4 \leftrightarrow 1$ which cannot be embedded on the sphere, but can be embedded on the torus. Given that our initial choice $a \leftrightarrow 2$ and $b \leftrightarrow 5$ is one of the $30$ possible choices, the illustration (c) explains why $\mathcal{N}_1(6,1) =30$ and the two choices illustrated in (a) and (b) explain why $\mathcal{N}_0(6,1)=2\times 30 =60$ as claimed in Theorem \ref{thm hexic 2-legged counts}. With regards to Theorem \ref{thm 2legged combinatorics}, notice that $c_3Q_{1,1}(3)=30$ and $c_3Q_{0,1}(3)=60$.}
	\label{fig2}
\end{figure}

\vspace{.2cm}

The next theorem is the analogous result for regular \(2\nu\)-valent graphs.

\begin{theorem}
\label{thm combinatorics}
Let \(\mathscr N_g(2\nu,j)\) denote the number of connected
\(2\nu\)-valent labeled graphs with \(j\) vertices that can be embedded on a
compact Riemann surface of minimal genus \(g\).  For
\(0\leq g\leq 5\) and \(1\leq j\leq 3\), one has
\begin{equation}\label{eq:Sgj-form}
    \mathscr N_g(2\nu,j)
    =
    C_\nu^j S_{g,j}(\nu),
\end{equation}
where \(S_{g,j}(\nu)\) is an explicit polynomial in \(\nu\) of degree $3(g+j-1)$.  The polynomials
\(S_{g,j}(\nu)\) for \(0\leq g\leq 5\) and \(1\leq j\leq 3\) are explicitly listed in
Section~\ref{subsec:Sgj-polynomials}.
\end{theorem}

Theorem~\ref{thm combinatorics}, together with
\eqref{eq:N-ELT-hypergeom} and the hypergeometric simplification carried out in
Appendix~\ref{sect: dfi form}, yields Theorem~\ref{thm:main-N}, our main result
concerning \(\mathscr N_g(2\nu,j)\).  The large-valence asymptotics then follow
from Theorem~\ref{thm:main-N}.

\begin{remark}\normalfont
For the convenience of the reader, in Appendix \ref{Appendix Number of graphs}
we provide combinatorial interpretations for the formulae in
Theorem \ref{thm combinatorics} when
\[
    (\nu,g,j)\in\{(2,0,1),(2,0,2),(2,1,1),(2,1,2)\}.
\]
\end{remark}

\begin{remark}\label{conjecture remark}\normalfont
It is a very interesting question to characterize the polynomials \(Q_{g,j}\)
and \(S_{g,j}\), which could lead to a complete characterization of the numbers
\(\mathcal{N}_{g}(2\nu,j)\) and \(\mathscr{N}_{g}(2\nu,j)\) for general
\(g\) and \(j\). We have observed interesting features about the polynomials
\(Q_{g,j}\) and \(S_{g,j}\) which we outline below. For each
\(g\in\mathbb N_0\) and \(j\in\mathbb N\), we conjecture that:

\begin{itemize}
   \item
    \(\mathscr{N}_{g}(2\nu,j)=C_\nu^j S_{g,j}(\nu)\), where
    \(S_{g,j}(\nu)\) is a polynomial in \(\nu\) of degree \(3(g+j-1)\) (note that this is implied by Conjecture \ref{conj:b-polynomial}).
    Besides repeated roots of order \(j\) at \(\nu=0\) and \(\nu=-1\), all
    other roots of \(S_{g,j}(\nu)\) are simple with positive real parts.
      \item
    \(\mathcal{N}_{g}(2\nu,j)=c_\nu^j Q_{g,j}(\nu)\), where
    \(Q_{g,j}(\nu)\) is a polynomial in \(\nu\) of degree \(3g+j-1\) (note that this is implied by Conjecture \ref{conj:a-polynomial}). Furthermore,  \(Q_{g,j}(\nu)\) has
    simple roots with non-negative real parts.
\end{itemize}
We have also observed numerically that, for fixed \(g\), as \(j\) increases,
the zeros of \(Q_{g,j}(\nu)\) and \(S_{g,j}(\nu)\) tend to move toward the line
\(\Re \nu=0\), even as their number increases. In particular, the maximum real
part of the roots of \(S_{g,j_1}\) (resp. \(Q_{g,j_1}\)) is strictly smaller
than that of \(S_{g,j_2}\) (resp. \(Q_{g,j_2}\)) whenever \(j_1>j_2\). We have
also observed that, for fixed \(g\), the number of complex-conjugate roots
increases with \(j\). Investigating the structure of these roots and
understanding the behavior of the associated polynomials remains an intriguing
direction for future research.
\end{remark}

\begin{remark}\normalfont
We note that by using the same arguments as in
Sections \ref{determining rn section} and \ref{determining free section}, which
were used to generate the explicit formulae in
Theorems \ref{thm 2legged combinatorics} and \ref{thm combinatorics}, one can
readily extend these tables to larger values of \(g\) and \(j\), with
additional computational effort.
\end{remark}

\subsection{Outline of the Proof of Theorems \ref{thm:main-N}, \ref{thm:main-Ncal}, \ref{thm 2legged combinatorics}
and \ref{thm combinatorics}}\label{Sec Roadmap}
A brief overview of the approach leading to our main results is as follows. We start with the unitary ensemble $\mathscr{H}_n$ of $n\times n$ Hermitian random matrices with the distribution
\begin{equation} 
	\dd \mu_{nN}(M;u,\nu) = \frac{1}{\Tilde{\mathcal{Z}}_{nN}(u,\nu)} \exp\Big(-N \mathrm{Tr}\left(\frac{M^2}{2}+u \frac{M^{2\nu}}{2\nu}\right)\Big) \dd M.
	\end{equation}
 These random matrices are in turn connected to orthogonal polynomials on the real line with orthogonality weight $\exp(-N(\frac{z^2}{2}+u \frac{z^{2\nu}}{2\nu}))$.   We establish our first set of results by combining two key identities (which have previously appeared separately in the literature) concerning the recurrence coefficients, \( \mathcal{R}_n \), of these orthogonal polynomials. First, through a change of variables in Section~\ref{variable change section}, we derive the differential-difference equation for \( \mathcal{R}_n \):
\begin{equation}\label{eq:volterra_lattice}
    \frac{\ddd \mathcal{R}_n}{\ddd u} = \frac{\mathcal{R}_n}{2\nu u} \left( N(\mathcal{R}_{n+1} - \mathcal{R}_{n-1}) - 2 \right),
\end{equation}
sometimes referred to as the Volterra lattice equation \cite{WaltersBook}. Equation \eqref{eq:volterra_lattice} was also derived as a special case of the more general ``edge equations'' introduced in \cite[Section~3.3]{Watersthesis}, which apply to all pure-valence cases, both even and odd; see also \cite[Equations~(D.3) and~(D.4)]{EW2022}. The second identity concerns the topological expansion \begin{equation}\label{Asymp ga_n^2 1}
\mathcal{R}_n(x;u) \sim \sum^{\infty}_{g=0} \frac{r_{2g}(x;u)}{N^{2g}},
		\end{equation}
        of the recurrence coefficients, where $x$ denotes the \textit{'t Hooft parameter} $n/N$. In particular, it turns out that the Taylor coefficients of $r_{2g}(x;u)$, when expanded in $u$ near $u=0$,
        depend monomialy on $x$ \cite{Watersthesis} \footnote{In a correspondence with N.Ercolani after posting the first preprint of this article, we learned that this property was originally proved in~\cite{Watersthesis}. We provide our alternative proof in Section~\ref{determining rn section}.}. Using these two ingredients, we prove  Theorems~\ref{thm 2legged combinatorics} and~\ref{thm combinatorics}, in which we obtain explicit formulae in the variable \( \nu \) describing \( \mathscr{N}_g(2\nu,j) \) and \( \mathcal{N}_g(2\nu,j) \) for \( 0 \leq g \leq 5 \) and finitely many fixed values of $j$. Our results reveal several intriguing structural patterns satisfied by the polynomials in $\nu$ describing \( \mathscr{N}_g(2\nu,j) \) and \( \mathcal{N}_g(2\nu,j) \), which lead to Remark~\ref{conjecture remark}, where we formulate conjectures and suggest possible directions for future research.

        \begin{tcolorbox}[projectstyle={Custom}, colback=yellow!6, colframe=orange!70!black]
For each fixed \(g\), the structural formulae
\eqref{eq:N-ELT-hypergeom}--\eqref{eq:Ncal-ELT-hypergeom}
reduce the general problem of finding explicit formulae for
\(\mathscr{N}_g(2\nu,j)\) and \(\mathcal{N}_g(2\nu,j)\), valid for \textit{all}
\(j,\nu\in\mathbb{N}\), to the determination of
\(3g-2\) and \(3g\) \(\nu\)-dependent expressions, respectively. These are
precisely the expressions determined in
Theorems~\ref{thm 2legged combinatorics} and~\ref{thm combinatorics}.
Consequently, these two theorems, together with
\eqref{eq:N-ELT-hypergeom}--\eqref{eq:Ncal-ELT-hypergeom}, yield fully
explicit formulae for \(\mathscr{N}_g(2\nu,j)\) and
\(\mathcal{N}_g(2\nu,j)\) valid for all
\(j,\nu\in\mathbb{N}\).
 \end{tcolorbox}

Let us illustrate this approach: fix $g \geq 2$ and consider ${\mathscr N}_{g}(2\nu,j)$. Since the coefficients $b^{(g,\nu)}_{\ell}$ are independent of $j$, one obtains a system of $3g-2$ linear equations for $\{b^{(g,\nu)}_{\ell}\}_{\ell=0}^{3g-3}$ provided that $3g-2$ formulae are known for
\[
{\mathscr N}_{g}(2\nu,j_0), \qquad j_0 = 1,2,\dots,3g-2,
\]
in the variable $\nu$. Similarly, for fixed $g \geq 1$, a linear system for $\{a^{(g,\nu)}_{\ell}\}_{\ell=0}^{3g-1}$ arises once the $3g$ formulae in $\nu$ are known for
\[
{\mathcal N}_{g}(2\nu,j_0), \qquad j_0 = 1,2,\dots,3g.
\]

\medskip
Solving these linear systems allows one to readily determine $\{b^{(g,\nu)}_{\ell}\}_{\ell=0}^{3g-3}$ and $\{a^{(g,\nu)}_{\ell}\}_{\ell=0}^{3g-1}$. As an example, we now provide the details
for the determination of
\[
    b_\ell^{(2,\nu)},
    \qquad
    \ell=0,\ldots,3.
\]
For \(g=2\), equation~\eqref{eq:N-ELT-hypergeom} becomes

\begin{equation}\label{non 2 general form g=2}
{\mathscr N}_{2}(2\nu,j) = j!\, c_\nu^j (\nu - 1)^j  \sum_{\ell = 0}^{3} \left(b^{(2,\nu)}_{\ell}
{ \ell+j  \choose j}
\ \pFq{2}{1}{-j,1-\nu j}{-(\ell+j)}{\frac{1}{1-\nu}}\right).
\end{equation} Setting $j=1,\ldots,4$ in
\eqref{non 2 general form g=2} yields a system of four equations for the
four \(j\)-independent unknowns
\(
    b_\ell^{(2,\nu)},
    \ell=0,\ldots,3.
\) By the method developed in
Section~\ref{determining free section}, the left-hand sides of these four
equations are obtained explicitly. More precisely, $\mathscr{N}_{2}(2\nu,j) = C_\nu^jS_{2,j}(\nu)$, where
\begin{eqnarray*}
S_{2,1}(\nu) &=& \frac{1}{1440}(5\nu-2)\prod_{i=-1}^3(\nu-i) ,\\
S_{2,2}(\nu) &=& \frac{1}{1440 }(\nu+1)^2\nu^2 (2\nu-3)(49\nu^2-43\nu+6)\prod_{i=1}^2(\nu-i),\\
S_{2,3}(\nu) &=& \frac{1}{480} (\nu+1)^3\nu^3(\nu-1) \bigg(539\nu^5-2356\nu^4+3677\nu^3-2460\nu^2  +660\nu-48\bigg),\\
S_{2,4}(\nu) &=& \frac{1}{360}(1+\nu)^4\nu^4(\nu - 1)\bigg(7148\nu^6 - 32946\nu^5 + 57857\nu^4 - 48477\nu^3  + 19778\nu^2 \\ &&\, - 3504\nu + 180\bigg).
\end{eqnarray*}
Solving the system of four equations given by \eqref{non 2 general form g=2} for $j=1,2,3,4$ we obtain
\begin{flalign*}
    b^{(2,\nu)}_0 &= -(\frac{\nu^3}{360} + \frac{71\nu^2}{2880} + \frac{\nu}{36} + \frac{1}{240}),&\\
b^{(2,\nu)}_1 &= \frac{\nu(31\nu^2 + 98\nu + 40)}{1440},&\\
b^{(2,\nu)}_2 &= -\frac{\nu^2(22\nu + 25)}{576},&\\
b^{(2,\nu)}_3 &= \frac{7\nu^3}{360}.&
\end{flalign*}
The same procedure can, in principle, be applied to determine
\(a_\ell^{(g,\nu)}\) and \(b_\ell^{(g,\nu)}\) for any fixed genus \(g\).

Outside the specific cases of low genus \( 0 \leq g \leq 2 \)\footnote{See \cite[Table 2]{ELTMe}}, the vast majority of previous attempts to determine \( \mathscr{N}_g(\mu,j) \) and \( \mathcal{N}_g(\mu,j) \) have focused on obtaining an expression for fixed $\mu=\mu_0$ and general $j$. Unfortunately, this approach does not allow for one to build a system of linear equations to solve for the undetermined coefficients in \eqref{eq:N-ELT-hypergeom}--\eqref{eq:Ncal-ELT-hypergeom}. Extending these fixed-$\mu$ results to arbitrary \( \mu \), in a way that simultaneously holds for general \( j \), is a difficult task and often requires an understanding of the Freud equations (see Remark~\ref{Freud remark}) as their order becomes large (see \cite{BGM,Erc-et-al-multiple-scale,Alfy}).

In the appendices we contrast the general-$\nu$ method of our main text with fixed-$\nu$ approaches of \cite{BGM} and \cite{BD}. We fix the valency to $2\nu = 6$ and derive closed-form expressions in $j$ for $\mathscr{N}_g(6,j)$ and $\mathcal{N}_g(6,j)$, when $g = 0, 1,$ and $2$. The case $g=2$ is, to our knowledge, new. This extends the work of \cite{BGM} on the 4-valent case, and that of \cite{BD} on the 3-valent case. Finding explicit results with the fixed-$\nu$ approach for $\mathscr{N}_g(6,j)$ and $\mathcal{N}_g(6,j)$ at higher genera $g \geq 3$ is possible but computationally more demanding\footnote{However, they are of limited mathematical interest in view of Theorems \ref{thm:main-Ncal} and \ref{thm:main-N} for $g=3$ and $g=4$.}. These results, though narrower in scope, serve to illustrate the comparative advantages of the general-$\nu$ framework.

Our main results close the problem of finding $\mathscr{N}_g(\mu,j)$ and $\mathcal{N}_g(\mu,j)$ for fixed $g$ and general $\mu=2\nu$ and $j$. We explicitly derive the counts for $g\leq4$ and our results can readily be extended to larger $g$ with increasing computational effort. The case of odd $\mu$ remains open, as do the more challenging problems of obtaining expressions for general $g$, $j$, and $\mu$, and mixed valence combinatorics (see Theorem \ref{thmEM02}).

\section{Differential Difference Equations}\label{variable change section} \normalfont
We begin our main arguments of this paper by proving Lemma \ref{sigma lemma} below. Let us begin by rescaling \eqref{fancy V def 2}. First, we make the transformation
\begin{equation}\label{uv relation}
    z = \sigma^{1/2}\zeta \quad \text{and} \quad u=\sigma^{-\nu }.
\end{equation} 
Recalling \eqref{fancy V def 2}, under this transformation we find that $\mathscr{V}(z) = V(\zeta)$, where
\begin{equation}\label{V def}
    V(\zeta) = \frac{\zeta^{2\nu}}{2\nu} + \sigma \frac{\zeta^2}{2}.
\end{equation}
We define the corresponding $\sigma$-partition function as
\begin{equation}\label{sigma partition function}
    Z_{nN}(\sigma) = \int_{-\infty}^\infty ... \int_{-\infty}^\infty \prod_{1\leq j<k\leq n} (\zeta_j-\zeta_k)^2\prod_{j=1}^n\exp\left[ -N\left(\frac{\zeta_j^{2\nu}}{2\nu}+\sigma\frac{\zeta_j^2}{2}\right)\right]\dd \zeta_1...\dd \zeta_n .
\end{equation} 
We define the $\sigma$-free energy as
\begin{equation}\label{sigma free energy}
    F_{nN}(\sigma) = \frac{1}{n^2} \ln\frac{Z_{nN}(\sigma)}{\mathcal{Z}_{nN}(0)}.
\end{equation}
Note that by equations \eqref{u free energy}, \eqref{free energy}, and \eqref{sigma free energy} we obtain the following relation between the free energies $\mathcal{F}_{nN}(u)$ and $F_{nN}(\sigma)$:
\begin{equation}\label{free relation}
    \mathcal{F}_{nN}(u) = \frac{\ln \sigma}{2}+F_{nN}(\sigma), \quad \sigma = u^{-1/\nu}.
\end{equation} 

We now introduce the class of monic polynomials $\{P_n(\zeta)\}_{n=0}^\infty$ which satisfy the orthogonality condition
\begin{equation}\label{h_n def}
    \int_{\mathbb{R}} P_n(\zeta)P_m(\zeta) e^{-NV(\zeta)} \dd \zeta  = h^{(\sigma)}_n\delta_{nm},
\end{equation} 
where $V(\zeta)$ is as defined in \eqref{V def}. As a consequence of their orthogonality condition these polynomials also satisfy the three term recurrence relation \cite{SzegoOP}
\begin{equation}\label{three term}
    \zeta P_n(\zeta) = P_{n+1}(\zeta) + R_nP_{n-1}(\zeta),
\end{equation}
where, 
\begin{equation}\label{R def}
    R_n = \frac{h^{(\sigma)}_n}{h^{(\sigma)}_{n-1}}.
\end{equation}
Using the definition of the recurrence coefficients and
\eqref{uv relation}, a direct computation gives
\begin{equation}\label{r relation}
    R_n = u^{\frac{1}{\nu}}\mathcal{R}_n,
\end{equation}
where $\mathcal{R}_n := \ga^2_n$ are the recurrence coefficients corresponding to polynomials orthogonal with respect to the weight $\mathscr{V}(z)$, see \eqref{OP rec}. We prove differential difference equations for $R_n$\footnote{Equation \eqref{dRndsigma} is sometimes referred to as the Volterra lattice equation \cite{Suris2003}.} and $F_{nN}$\footnote{Differential difference equations for $F_n$ are referred to as Toda equations in the literature \cite{BD, BGM}.} which are valid for all $\nu$.

\begin{lemma}\label{sigma lemma}
The recurrence coefficient $R_n$ and the free energy $F_{nN}$ satisfy the following differential difference equations independent of $\nu$,
\begin{equation}\label{dRndsigma}
    \frac{\ddd  R_n}{\ddd \sigma} = \frac{-N}{2} R_n \left( R_{n+1} - R_{n-1} \right),
\end{equation}
\begin{equation}\label{the second f}
    \frac{\ddd ^2F_{nN}}{\ddd \sigma^2}  = \frac{N^2}{4n^2}R_n\left( R_{n+1} + R_{n-1} \right).
\end{equation} 
\end{lemma}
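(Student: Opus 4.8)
The plan is to derive both identities from the standard structure of orthogonal polynomials associated to the weight $e^{-NV(\zeta)}$ with $V$ as in \eqref{V def}, using the $\sigma$-dependence of the weight. First I would record the two classical building blocks. The first is the \emph{ladder / deformation} relation obtained by differentiating the orthogonality relation \eqref{h_n def} with respect to $\sigma$: since $\partial_\sigma V(\zeta) = \zeta^2/2$, differentiating $\int P_n^2 e^{-NV} \, \dd\zeta = h_n$ and $\int P_n P_{n-1} e^{-NV}\, \dd\zeta = 0$ (using that $\partial_\sigma P_n$ is a polynomial of degree $\leq n-1$, hence expandable in $P_0,\dots,P_{n-1}$, and only the $P_{n-1}$-component survives the second integral after inserting the recurrence $\zeta^2 P_{n-1}$ expansion) yields $\dd (\ln h_n)/\dd\sigma = -\tfrac{N}{2}\langle \zeta^2 P_n, P_n\rangle/h_n$ and an expression for the off-diagonal derivative. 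The second building block is the three-term recurrence \eqref{three term} itself, which gives $\langle \zeta^2 P_n, P_n\rangle / h_n = R_{n+1} + R_n$ by expanding $\zeta P_n = P_{n+1} + R_n P_{n-1}$ twice.

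From these I would assemble \eqref{dRndsigma}. Write $R_n = h_n/h_{n-1}$, so $\dd(\ln R_n)/\dd\sigma = \dd(\ln h_n)/\dd\sigma - \dd(\ln h_{n-1})/\dd\sigma$. Using $\dd(\ln h_n)/\dd\sigma = -\tfrac{N}{2}(R_{n+1}+R_n)$ from the previous paragraph, the difference telescopes to $\dd(\ln R_n)/\dd\sigma = -\tfrac{N}{2}(R_{n+1}+R_n) + \tfrac{N}{2}(R_n + R_{n-1}) = -\tfrac{N}{2}(R_{n+1}-R_{n-1})$, which is exactly \eqref{dRndsigma} after multiplying by $R_n$. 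For \eqref{the second f}, recall $F_{nN}(\sigma) = \tfrac{1}{n^2}\ln\big(Z_{nN}(\sigma)/\mathcal{Z}_{nN}(0)\big)$ and that, by Heine's formula, $Z_{nN}(\sigma) = n!\prod_{k=0}^{n-1} h_k$. Hence $n^2 F_{nN} = \log(n!) + \sum_{k=0}^{n-1}\log h_k - \log \mathcal{Z}_{nN}(0)$, so $n^2 \dd F_{nN}/\dd\sigma = \sum_{k=0}^{n-1}\dd(\ln h_k)/\dd\sigma = -\tfrac{N}{2}\sum_{k=0}^{n-1}(R_{k+1}+R_k)$. Differentiating once more, $n^2\, \dd^2 F_{nN}/\dd\sigma^2 = -\tfrac{N}{2}\sum_{k=0}^{n-1}(\dd R_{k+1}/\dd\sigma + \dd R_k/\dd\sigma)$, and substituting \eqref{dRndsigma} in the form $\dd R_k/\dd\sigma = -\tfrac{N}{2}R_k(R_{k+1}-R_{k-1})$ produces, after the inner sum telescopes, the closed expression $\tfrac{N^2}{4}R_n(R_{n+1}+R_{n-1})$; dividing by $n^2$ gives \eqref{the second f}. (One should keep track of the boundary terms $R_0$, which vanishes by convention, and the top term $R_n$, which is precisely what survives.)

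The main obstacle is making the differentiation-under-the-integral and the degree-counting argument for $\partial_\sigma P_n$ fully rigorous: one must justify that $P_n(\zeta;\sigma)$ depends smoothly on $\sigma > 0$ (which follows from the determinantal/Hankel formula for $P_n$ and nonvanishing of the Hankel determinant, itself guaranteed by positivity of the weight), that the moment integrals converge and may be differentiated (the Gaussian-type decay from the $\zeta^{2\nu}$ term handles this uniformly on compact $\sigma$-intervals), and that $\partial_\sigma P_n$ is genuinely a polynomial of degree at most $n-1$ — this last point is immediate since $P_n$ is monic, so the $\zeta^n$ coefficient is constant in $\sigma$. The remaining algebra (the telescoping sums and the two applications of the recurrence) is routine bookkeeping. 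I would present the smoothness and differentiation-under-the-integral points briefly, then carry out the telescoping computations explicitly, since they are short and transparent.
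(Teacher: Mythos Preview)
Your proposal is correct and follows essentially the same route as the paper: differentiate $h_n$ in $\sigma$ using orthogonality and the three-term recurrence to obtain $\partial_\sigma \ln h_n = -\tfrac{N}{2}(R_{n+1}+R_n)$, deduce \eqref{dRndsigma} from $R_n = h_n/h_{n-1}$, and then differentiate $F_{nN} = \tfrac{1}{n^2}\big(\ln n! + \sum_{k=0}^{n-1}\ln h_k\big)$ twice and telescope using $R_0=0$ to reach \eqref{the second f}. Your added remarks on smoothness in $\sigma$ and differentiation under the integral are welcome but are not treated explicitly in the paper.
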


\begin{proof}
We first derive equation \eqref{dRndsigma} which will in turn be used to prove equation \eqref{the second f}. 
Differentiating \eqref{h_n def} with respect to $\sigma$ and using the orthogonality of $P_n(\zeta )$ combined with \eqref{three term}, we find
\begin{eqnarray}
    \frac{\ddd  h^{(\sigma)}_n}{\ddd \sigma} &=& \frac{\ddd }{\ddd \sigma}\int_\Gamma P_n(\zeta )P_n(\zeta ) e^{-NV(\zeta )} d\zeta  \nonumber, \\
    &=& 2\int_\Gamma \left( \frac{\ddd }{\ddd \sigma} P_n(\zeta )\right) P_n(\zeta ) e^{-NV(\zeta )} d\zeta  + \int_\Gamma  P_n(\zeta ) P_n(\zeta ) \frac{\ddd }{\ddd \sigma} e^{-NV(\zeta )} d\zeta  ,\nonumber \\
    &=& 0 + \int_\Gamma  P_n(\zeta ) P_n(\zeta ) \left( \frac{-N\zeta ^2}{2}\right) e^{-NV(\zeta )} d\zeta  \nonumber, \\
    &=&\frac{-N}{2}\int_\Gamma  \left(P_{n+1}(\zeta )+R_n P_{n-1}(\zeta )\right)^2 e^{-NV(\zeta )} d\zeta \nonumber, \\
    &=&\frac{-N}{2} \left( h^{(\sigma)}_{n+1} + R_n^2h^{(\sigma)}_{n-1} \right)\nonumber, \\
    &=&\frac{-N}{2} h^{(\sigma)}_n \left( R_{n+1} + R_n \right).\label{dhn}
\end{eqnarray}
Thus,
\begin{eqnarray}
    \frac{\ddd  R_n}{\ddd \sigma} &=& \frac{\ddd }{\ddd \sigma} \left( \frac{h^{(\sigma)}_n}{h^{(\sigma)}_{n-1}} \right)\nonumber ,\\
    &=& \frac{\left( \frac{\ddd }{\ddd \sigma}h^{(\sigma)}_n \right)h^{(\sigma)}_{n-1} - h^{(\sigma)}_n \left(\frac{\ddd }{\ddd \sigma}h^{(\sigma)}_{n-1}\right)}{(h^{(\sigma)}_{n-1})^2}\nonumber, \\
    &=& \frac{-N}{2} \frac{ h^{(\sigma)}_n \left( R_{n+1} + R_n \right)h^{(\sigma)}_{n-1} - h^{(\sigma)}_n h^{(\sigma)}_{n-1} \left( R_{n} + R_{n-1} \right)}{(h^{(\sigma)}_{n-1})^2}\nonumber ,\\
    &=& \frac{-N}{2} R_n \left( R_{n+1} - R_{n-1} \right).\label{d Rn ds}
\end{eqnarray}
Hence, we have proved equation \eqref{dRndsigma}. By the Heine's identity for Hankel determinants we can re-write the free energy $F_{nN}(\sigma)$ as
\begin{equation}\label{F def}
   F_{nN}(\sigma) = \frac{\ln n!}{n^2} +\frac{1}{n^2}\sum^{n-1}_{k=0} \ln h^{(\sigma)}_k.
\end{equation}
As an immediate consequence of equation \eqref{dhn} we determine that
\begin{equation}\label{d ln hn}
    \frac{\ddd  \ln h^{(\sigma)}_n}{\ddd  \sigma} = \frac{-N}{2} \left( R_{n+1} + R_n \right).
\end{equation}
Thus, taking the second derivative of \eqref{F def} and applying equations \eqref{d ln hn} and \eqref{d Rn ds} we find that,
\begin{eqnarray*}
    \frac{\ddd ^2 F_{nN}(\sigma)}{\ddd  \sigma^2} &=& \frac{-N}{2n^2}\frac{\ddd }{\ddd \sigma} \left(\sum^{n-1}_{k=0}  \left(R_{k+1} + R_k \right) \right),\\
    &=& \frac{-N}{2n^2} \sum^{n-1}_{k=0} \left( \frac{\ddd }{\ddd \sigma}R_{k+1} + \frac{\ddd }{\ddd \sigma}R_k \right),\\
    &=& \frac{N^2}{4n^2} \sum^{n-1}_{k=0} \Big( R_{k+1}  \left( R_{k+2} - R_{k} \right) + R_k \left( R_{k+1} - R_{k-1} \right) \Big),\\
    &=& \frac{N^2}{4n^2} \sum^{n-1}_{k=0}  R_{k+1} R_{k+2} - R_kR_{k-1} , \\
        &=& \frac{N^2}{4n^2}\left( R_{n+1}R_n + R_nR_{n-1} + \sum^{n-3}_{k=0}  R_{k+1}R_{k+2} - \sum^{n-1}_{k=0} R_{k}R_{k-1} \right), \\
    &=& \frac{N^2}{4n^2}\left( R_{n+1}R_n + R_nR_{n-1} + \sum^{n-1}_{j=2}  R_{j-1}R_{j} - \sum^{n-1}_{k=0} R_{k}R_{k-1} \right), \\
    &=& \frac{N^2}{4n^2}R_n\left( R_{n+1} + R_{n-1} \right),
\end{eqnarray*}
where to arrive at the final equality we have used the fact that $R_0=0$ which follows from equation \eqref{three term}.
\end{proof}
We will use Lemma \ref{sigma lemma} to prove the main results of this paper.

\section{The Asymptotic Expansion of $\mathcal{R}_n$}\label{determining rn section}

 In this section we use equation \eqref{dRndsigma} to prove Theorem \ref{recurrence theorem}. Theorem \ref{recurrence theorem} then allows us to prove Theorem \ref{free energy theorem} in Section \ref{determining free section}. To begin, let us determine $\di \frac{\ddd \mathcal{R}_n}{\ddd u}$ in terms of $\mathcal{R}_{n+1}$ and $\mathcal{R}_{n-1}$ using equations \eqref{uv relation}, \eqref{r relation} and \eqref{dRndsigma},
\begin{eqnarray}
    \frac{\ddd\mathcal{R}_n}{\ddd u} &=& \frac{\ddd}{\ddd u}(u^{-\frac{1}{\nu}}R_n),\nonumber \\
    &=&-\frac{1}{u\nu}\mathcal{R}_n - \frac{u^{-\frac{2}{\nu}}}{u \nu}\frac{\ddd R_n}{\ddd \sigma},\nonumber \\
    &=&\frac{\mathcal{R}_n}{2\nu u}\left( N(\mathcal{R}_{n+1} - \mathcal{R}_{n-1}) -2 \right).\label{dRdu}
\end{eqnarray}
Note that we have just recovered \eqref{eq:volterra_lattice}. To prove Theorem \ref{recurrence theorem} we are going to need \eqref{dRdu} and some properties of the Freud equations (sometimes referred to as the string equations). The Freud equations (see e.g. \cite{BL, Maggy}) are given by 
\begin{equation}
    \ga_n[\mathscr{V}'(\mathcal{Q})]_{n,n-1}=\frac{n}{N},
\end{equation}
 where in the case of even potentials the infinite matrix $\mathcal{Q}$ is given by
\begin{equation}
    \mathcal{Q}= \begin{pmatrix}
    0 & \ga_1 & 0 & 0 & \cdots \\
    \ga_1 & 0 & \ga_2 & 0 & \cdots \\
    0 & \ga_2 & 0 & \ga_3 & \cdots \\
    \vdots & \ddots & \ddots & \ddots & \cdots\\
    \end{pmatrix}.
\end{equation}
It is straightforward to show that for the weight $\mathscr{V}(z) = \frac{z^2}{2} + u\frac{z^{2\nu}}{2\nu}$ the Freud equation can be written as
\begin{equation}\label{freud equation}
    \mathcal{R}_n + uF_\nu = x,
\end{equation}
where we refer to $F_\nu$ as the \textit{Freud function} and $x=\frac{n}{N}$. Note that the $\mathcal{R}_n$ term on the LHS of \eqref{freud equation} arises from the $\frac{z^2}{2}$ component of the weight $\mathscr{V}(z)$ and $F_\nu$ arises from $\frac{z^{2\nu}}{2\nu}$.

\begin{remark}\label{Freud remark}
        The first few Freud functions are:
    \begin{eqnarray*}
        \nu =1 &:& F_1 = \mathcal{R}_n,\\
        \nu =2 &:& F_2 = \mathcal{R}_n(\mathcal{R}_{n+1} +\mathcal{R}_{n}+\mathcal{R}_{n-1}),\\
        \nu =3 &:& F_3 = \mathcal{R}_n(\mathcal{R}_{n+2}\mathcal{R}_{n+1} + \mathcal{R}_{n+1}^2 + 2\mathcal{R}_n\mathcal{R}_{n+1}
      +\mathcal{R}_n^2 + 2\mathcal{R}_n\mathcal{R}_{n-1} \\ &&  \qquad+  \mathcal{R}_{n+1}\mathcal{R}_{n-1} + \mathcal{R}_{n-1}^2 + \mathcal{R}_{n-1}\mathcal{R}_{n-2}).
    \end{eqnarray*}
\end{remark}
\noindent As part of our work studying hexic weights, we provide a direct derivation of equation \eqref{freud equation} for $\nu=3$ in Appendix \ref{Hexic section}. We now recall a few well known facts about $F_\nu$. 

\begin{lemma}\label{Freudian}
    The Freud functions $F_{\nu}$ for weights of the form $e^{-N(\frac{z^{2}}{2}+u\frac{z^{2\nu}}{2\nu})}$ have the following properties:
    \begin{enumerate}
        \item There are ${2\nu-1 \choose \nu}$ number of terms in $F_{\nu}$, which are not necessarily distinct.
        \item Each term is the product of $\nu$ recurrence coefficients from the set $\{\mathcal{R}_{n+\ell}: -\nu+1 \leq \ell \leq \nu-1 \}$.
    \end{enumerate}

\end{lemma}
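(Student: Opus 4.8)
The plan is to derive an explicit matrix-element formula for $F_\nu$ from the Freud equation and then extract both claims from a lattice-walk interpretation of that formula. Since $\mathscr{V}'(z) = z + u z^{2\nu-1}$, we have $\mathscr{V}'(\mathcal{Q}) = \mathcal{Q} + u\,\mathcal{Q}^{2\nu-1}$, hence $[\mathscr{V}'(\mathcal{Q})]_{n,n-1} = \gamma_n + u\,[\mathcal{Q}^{2\nu-1}]_{n,n-1}$. Multiplying by $\gamma_n$ and comparing with \eqref{freud equation} (whose $\mathcal{R}_n$ term comes from the $z^2/2$ part of $\mathscr{V}$) identifies
\[
  F_\nu = \gamma_n\,[\mathcal{Q}^{2\nu-1}]_{n,n-1}.
\]
Since the only nonzero entries of $\mathcal{Q}$ are $\mathcal{Q}_{k-1,k}=\mathcal{Q}_{k,k-1}=\gamma_k$, I would then expand $[\mathcal{Q}^{2\nu-1}]_{n,n-1}$ as a sum over nearest-neighbour lattice walks $n=k_0,k_1,\dots,k_{2\nu-1}=n-1$ on the index set of $\mathcal{Q}$, each contributing the weight $\prod_{t=1}^{2\nu-1}\gamma_{\max(k_{t-1},k_t)}$.

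For claim (1): a walk of length $2\nu-1$ with net displacement $-1$ has exactly $\nu-1$ up-steps and $\nu$ down-steps, so (for $n\ge\nu$, away from the boundary at index $0$) the number of such walks equals the number of ways to place the up-steps among the $2\nu-1$ positions, namely $\binom{2\nu-1}{\nu-1}=\binom{2\nu-1}{\nu}$. Each walk, together with the prefactor $\gamma_n$, produces one monomial of $F_\nu$; different walks may yield the same monomial, which is precisely the content of the phrase ``not necessarily distinct''. When $n<\nu$ the walks that would leave the index set simply do not occur; the monomials they would have contributed are exactly those containing $\mathcal{R}_0=0$, so the polynomial identity is unaffected.

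For claim (2) I would argue by parity on the path graph. A walk contributes $2\nu-1$ factors $\gamma$, and the prefactor one more, for total $\gamma$-degree $2\nu$. On a path, the walk $n\to n-1$ crosses the edge $\{n-1,n\}$ an odd number of times and every other edge an even number of times (the crossing count of an edge is congruent mod $2$ to its net signed crossing number, which equals the indicator that the edge separates $n$ from $n-1$). Since the edge $\{k-1,k\}$ carries weight $\gamma_k$, each walk contributes an odd power of $\gamma_n$ times even powers of the remaining $\gamma_k$; multiplying by the prefactor $\gamma_n$ makes the exponent of $\gamma_n$ even as well, so the monomial is a product of squares $\gamma_k^2=\mathcal{R}_k$, hence a product of exactly $\nu$ recurrence coefficients. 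Finally, a walk of length $2\nu-1$ starting at $n$ visits only indices in $\{n-\nu,\dots,n+\nu-1\}$ (the extremes attained by doing all down-steps or all up-steps first), so the edges $\{k-1,k\}$ it traverses have $k\in\{n-\nu+1,\dots,n+\nu-1\}$, giving recurrence coefficients in $\{\mathcal{R}_{n+\ell}:-\nu+1\le\ell\le\nu-1\}$. The main obstacle is the bookkeeping in this last step: making sure the edge-crossing parity argument is applied correctly on the semi-infinite path (including when the walk touches the boundary vertex $0$), and that the single prefactor $\gamma_n$ is exactly what upgrades every $\gamma$-factor to a square; the remaining counting is a routine lattice-path exercise. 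An alternative route to claim (1) is induction on $\nu$ using the band structure of $\mathcal{Q}^{2\nu-1}$, but the walk picture establishes both claims simultaneously and transparently.
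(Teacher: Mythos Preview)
Your proof is correct and rests on the same lattice-walk picture the paper uses: expand $z^{2\nu-1}\mathcal{P}_n$ (equivalently, $[\mathcal{Q}^{2\nu-1}]_{n,n-1}$) as a sum over nearest-neighbour paths from $n$ to $n-1$, and count the $\binom{2\nu-1}{\nu}$ step-sequences with $\nu$ down-steps and $\nu-1$ up-steps. The one genuine difference is in how claim~(2) drops out. The paper's appendix works with the \emph{monic} recurrence $z\mathcal{P}_n=\mathcal{P}_{n+1}+\mathcal{R}_n\mathcal{P}_{n-1}$, in which an up-step carries weight $1$ and a down-step from index $k$ carries weight $\mathcal{R}_k$; so the $\nu$ down-steps immediately furnish a product of $\nu$ recurrence coefficients, and the binomial analogy $(1+x)^{2\nu-1}$ is literal. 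You instead stick with the symmetric matrix $\mathcal{Q}$ (as the Freud equation is stated), where every step carries a $\gamma$, and then invoke the edge-crossing parity argument plus the prefactor $\gamma_n$ to pair the $2\nu$ gammas into $\nu$ squares $\mathcal{R}_k$. Your route is a bit longer but also more explicit---the parity argument is a clean way to see why only $\mathcal{R}$'s (not bare $\gamma$'s) appear---whereas the paper's monic-polynomial formulation makes that fact automatic and the whole proof a one-liner. Your handling of the index range and the boundary $n<\nu$ is also more careful than the paper's sketch.
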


\noindent Lemma \ref{Freudian} can be seen as a consequence of the work \cite{Maggy}. However, we also include a short proof in Appendix \ref{Freud section} for completeness.

Let us recall the asymptotic expansion \eqref{Asymp ga_n^2} 
\begin{equation}\label{R_n expansion}
    \mathcal{R}_{n}(x;u) = \sum_{g=0}^\infty \frac{r_{2g}(x;u)}{N^{2g}},
\end{equation} 
where $r_{2g}(x;u)$ can also be written as a power series, this time in terms of $u$. Furthermore, 
evaluation of the Taylor expansion of $r_j$, centered at $x = n/N$, at $x \pm k/N$ yields
\begin{equation}\label{R_np1 expansion}
    \mathcal{R}_{n\pm k}(x;u) \sim \sum_{m=0}^\infty \frac{1}{N^{2m}}\sum_{l=0}^\infty \frac{(\pm k)^lr_{2m}^{(l)}(x;u)}{l!N^{l}},\quad \text{as} \quad N\to\infty,
\end{equation} 
where the derivatives of $r_j$ are taken with respect to $x$. 

\begin{remark}\label{string remark} \normalfont
    Using Lemma \ref{Freudian} and \eqref{R_np1 expansion} we can deduce that the $N^{0}$ order of the Freud equation for the recurrence coefficients of polynomials with weight $e^{-N\mathscr{V}(z)}$ is given by
        \begin{eqnarray}
            r_0 + u{2\nu-1 \choose \nu}\left(r_0\right)^\nu &=& x, \label{Order zero String} \end{eqnarray}
        which is equivalent to \eqref{r_0 string} proven in \cite{ErcolaniCaustics} up to a simple change of variables: $r_0 \mapsto xr_0$.
\end{remark}
\begin{theorem}\label{theorem r structure} It holds that
\begin{equation}\label{r_2g series}
    r_{2g}(x;u) = \sum_{j=0}^\infty \beta_{2g,j}(x)u^j,
\end{equation}
where $\beta_{2g,j}(x) = c_{2g,j}x^{\mathscr{D}}$ and $\mathscr{D} = j(\nu-1)+1-2g$.  If $\mathscr{D}<0$ then $\beta_{2g,j}(x)=c_{2g,j} = 0$. Note that for $\mathscr{D}\geq 0$ one may still find the trivial solution $\beta_{2g,j}(x)=0$.
\end{theorem}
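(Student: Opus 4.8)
The plan is to prove the statement by induction on $g$, and, within each fixed $g$, by induction on $j$, using the differential-difference equation \eqref{dRdu} together with the Freud equation \eqref{freud equation}. The base case $g=0$ is supplied by Remark \ref{string remark}: the order-$N^0$ string equation \eqref{Order zero String} is an algebraic equation for $r_0(x;u)$ of the form $r_0 + u\binom{2\nu-1}{\nu}r_0^\nu = x$, and from it one reads off by Lagrange inversion (or by directly substituting $r_0 = \sum_j \beta_{0,j}(x)u^j$ and matching powers of $u$) that each $\beta_{0,j}(x)$ is a constant multiple of $x^{j(\nu-1)+1}$; indeed $\mathscr{D} = j(\nu-1)+1 \geq 1 > 0$ always holds in this case, so there is no vanishing. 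This both establishes the $g=0$ case and illustrates the mechanism: the exponent of $x$ is forced purely by homogeneity considerations.

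The inductive step is where the homogeneity structure must be made precise. Fix $G \geq 1$ and assume the claim holds for all $r_{2g}$ with $g < G$. Substitute the expansion \eqref{R_n expansion}, together with the shifted expansions \eqref{R_np1 expansion} for $\mathcal{R}_{n\pm k}$, into \eqref{dRdu}, and extract the coefficient of $N^{-2G}$. On the left one gets $\partial r_{2G}/\partial u$; on the right one gets a term $\frac{r_{2G}}{2\nu u}\big(r_1'(x;u) \cdot(\text{something}) - \cdots\big)$ — more precisely, collecting all $N^{-2G}$ contributions, one obtains a linear inhomogeneous ODE (in $u$, with $x$ a parameter) for $r_{2G}$ whose inhomogeneous term is a polynomial expression in $r_{2g}$ and their $x$-derivatives $r_{2g}^{(l)}$ for $g < G$, divided by $\nu u$. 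Similarly, extracting the $N^{-2G}$ order of the Freud equation \eqref{freud equation} — using Lemma \ref{Freudian} to control the structure of $F_\nu$ as a sum of $\binom{2\nu-1}{\nu}$ products of $\nu$ recurrence coefficients $\mathcal{R}_{n+\ell}$, each expanded via \eqref{R_np1 expansion} — yields an algebraic relation expressing $r_{2G}$ (times the factor $1 + \nu u \binom{2\nu-1}{\nu} r_0^{\nu-1}$, which is $\nu - (\nu-1)r_0$ after the change of variables, hence nonzero) in terms of lower-order data. The key observation is a \emph{grading}: assign to $u$ the weight $\nu - 1$, to $x$ the weight $1$, and to each $x$-derivative $\partial_x$ the weight $-1$; then by the induction hypothesis each $\beta_{2g,j}(x)u^j = c_{2g,j}x^{j(\nu-1)+1-2g}u^j$ is homogeneous of total weight $j(\nu-1) + [j(\nu-1)+1-2g] = 1 - 2g + 2j(\nu-1)$ — wait, more cleanly: $x^{j(\nu-1)+1-2g}u^j$ has $x$-weight $j(\nu-1)+1-2g$, and the monomial $r_{2g}$ as a whole, at order $u^j$, sits in $x$-degree $j(\nu-1)+1-2g$. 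One checks that every operation appearing in the recursion — multiplication by $u$, division by $u$, multiplication of several $r_{2g}$'s, and differentiation in $x$ — is compatible with the rule ``$x$-degree at order $u^j$ equals $j(\nu-1) + 1 - 2G$'' for the resulting object at genus $G$. Matching the coefficient of $u^j$ on both sides then forces $\beta_{2G,j}(x)$ to be a monomial $c_{2G,j}x^{j(\nu-1)+1-2G}$.

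The vanishing clause is then automatic: if $\mathscr{D} = j(\nu-1)+1-2g < 0$, the putative monomial $x^{\mathscr{D}}$ is not a polynomial in $x$, but $\beta_{2g,j}(x)$ must be polynomial (indeed it is a Taylor coefficient of the analytic function $r_{2g}(x;u)$ near $x=1$, $u=0$, and by the structural result \eqref{PQ} of \cite{ErcolaniCaustics}, or by tracking through the recursion, one sees that after the change of variables $r_{2g}$ is a rational function with the stated form; negative powers of $x$ cannot appear because the only source of $x$-dependence is the polynomial $r_0$ and the algebraic relations, which introduce no poles at $x=0$). Hence $\beta_{2g,j}(x) = c_{2g,j} = 0$ in that regime. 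The main obstacle I anticipate is \emph{bookkeeping the grading carefully through the Freud equation}: one must verify that the combination of terms from $F_\nu$ at order $N^{-2G}$, after all the Taylor shifts in \eqref{R_np1 expansion}, respects the claimed homogeneity, and that the ``pivot'' factor multiplying $r_{2G}$ is a nonzero function of $r_0$ so that $r_{2G}$ is uniquely solved for. This is essentially a careful power-counting argument, and once the weight assignment ($u \leftrightarrow \nu-1$, $x \leftrightarrow 1$, $\partial_x \leftrightarrow -1$, each factor of $N^{-2} \leftrightarrow -2$ in $x$-degree per unit $u$-degree... ) is fixed, every term can be checked to land in the predicted graded piece; the subtlety is purely in not miscounting.
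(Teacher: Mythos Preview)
Your strategy is essentially the one the paper follows: double induction on $(g,j)$, with the $g=0$ base case coming from the order-$N^0$ string equation \eqref{Order zero String} (Lagrange inversion), and the inductive step a power-counting/homogeneity argument. The grading you describe (weight $1$ for $x$, weight $-1$ for $\partial_x$, etc.) is exactly the bookkeeping the paper carries out term by term in the Freud equation.

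One point of divergence worth flagging: you propose to use \emph{both} the Volterra equation \eqref{dRdu} and the Freud equation \eqref{freud equation} in the inductive step, but the paper's proof uses only the Freud equation. This is not incidental. The Volterra equation at order $N^{-2G}$ and order $u^J$ gives a first-order linear ODE in $x$ for $\beta_{2G,J}$, namely
\[
\nu J\,\beta_{2G,J} - x\,\frac{d\beta_{2G,J}}{dx} = \lambda_{G,J}\,x^{J(\nu-1)+1-2G},
\]
whose general solution is $c\,x^{J(\nu-1)+1-2G} + A\,x^{\nu J}$. The homogeneous piece $A x^{\nu J}$ has the wrong degree and must be shown to vanish; the Volterra equation alone cannot do this. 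The paper makes exactly this observation in the remark following the proof, and that is why the proof proper runs entirely through the Freud equation: writing $\mathcal{R}_n = x - uF_\nu$ and expanding $F_\nu$ via Lemma \ref{Freudian} and \eqref{R_np1 expansion} gives $\beta_{2G,J}$ \emph{algebraically} (no ODE, no spurious homogeneous solution) as a sum of products of lower-order $\beta$'s and their $x$-derivatives, each of which lands in the correct graded piece by induction. So your instinct to invoke the Freud equation is right and is in fact doing all the work; the Volterra route is what the paper uses afterward for \emph{computing} the constants $c_{2g,j}$, not for proving the monomial structure.

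Your vanishing argument for $\mathscr{D}<0$ via analyticity and \eqref{PQ} is more roundabout than needed. In the paper's setup it falls out of the same induction: if any factor $\beta_{2m,j_p}$ in a product has negative formal degree it is zero by hypothesis, and if a factor has nonnegative degree but is differentiated more times than its degree it also vanishes; either way the contribution is zero, so $\beta_{2G,J}=0$ whenever $J(\nu-1)+1-2G<0$.
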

\begin{proof}
We will prove Theorem \ref{theorem r structure} by induction. First, as shown in Appendix \ref{0 expansion} we find that
\begin{equation}\label{induction step 1}
    \beta_{0,j}(x) = c_{0,j}x^{j(\nu-1)+1},
\end{equation}
where 
\[ c_{0,j} = \left(-{2\nu-1 \choose \nu}\right)^j\frac{(j\nu)!}{j!(j(\nu-1)+1)!} .\]
Thus, Theorem \ref{theorem r structure} holds for all $j\in \N_0$ when $g=0$. Furthermore, comparing the $N^{-2g}$ coefficients in equation \eqref{freud equation} it readily follows that $\beta_{2g,0}=0$ for all $g>0$. Thus, Theorem \ref{theorem r structure} also holds for all $g\in \N_0$ when $j=0$. These two identities constitute our base case for the inductive argument.

Assume Theorem \ref{theorem r structure} holds true for all $j\leq J$ when $g<G$ and for all $j<J$ when $g=G$. We will prove that  that $\beta_{2G,J}(x) = c_{2G,J}x^{J(\nu-1)+1-2G}$. 

Let us recall the Freud equation \begin{equation}\label{freud equation1}
    \mathcal{R}_n   = x - uF_\nu.
\end{equation} The first statement of Lemma \ref{Freudian} suggests expressing the Freud function $F_{\nu}$ as 
\begin{equation}
    F_{\nu} \equiv \sum^{M_{\nu}}_{m=1} F_{\nu,m}, \qquad M_{\nu}:= {2\nu-1 \choose \nu},
\end{equation}
where by the second statement of Lemma \ref{Freudian} we have
\begin{equation}
    F_{\nu,m} = \prod_{s\in I_{\nu,m}}\mathcal{R}_{n+s},
\end{equation}
with the index set $I_{\nu,m} \subset I_{\nu} := \{-\nu+1, -\nu+2, \ldots, \nu-2, \nu-1\}$ and $|I_{\nu,m}|=\nu$. The members of $I_{\nu,m}$ may not be necessarily distinct.

By equation \eqref{freud equation1}, in order to find an expression for $\be_{2G,J}$ we need to find the $u^{J-1}$ Taylor coefficient of the $N^{-2G}$ coefficient in the large $N$ asymptotic expansion of $F_{\nu}$. To this end, fix $m \in \{1,\ldots,M\}$, set $I_{\nu,m} = \{a_1,\ldots,a_{\nu}\}$, and choose the vectors of indices $(j_1,\ldots,j_{\nu})^T \in \N^{\nu}_0$ and $(k_1,\ldots,k_{\nu})^T \in \N^{\nu}_0$ with the property that \begin{equation}\label{k's and j's}
    k_1 + \cdots + k_{\nu}=2G, \qquad \mbox{and} \qquad j_1 + \cdots + j_{\nu}= J-1.
\end{equation}
For each $p \in \{1,\ldots,\nu\}$, we define $\Xi_{m}(x;k_p,j_p)$ as being the $u^{j_p}$ Taylor coefficient of the $N^{-k_{p}}$ coefficient in the large $N$ asymptotic expansion of $\mathcal{R}_{n+a_p}$. Then $\Upsilon_{m}(x;k_1,\ldots,k_p, j_1, \ldots, j_p):=\prod^{\nu}_{p=1} \Xi_{m}(x;k_p,j_p)$ is the contribution of the particular choice $(j_1,\ldots,j_{\nu})^T \in \N^{\nu}_0$ and $(k_1,\ldots,k_{\nu})^T \in \N^{\nu}_0$ to the desired the $u^{J-1}$ Taylor coefficient of the $N^{-2G}$ coefficient in the large $N$ asymptotic expansion of $F_{\nu,m}$.  Recalling \eqref{R_np1 expansion} we have
\begin{equation}\label{R_np1 expansion1}
    \mathcal{R}_{n+ a_p}(x;u) \sim \sum_{m=0}^\infty \frac{1}{N^{2m}}\sum_{l=0}^\infty \frac{(a_p)^lr_{2m}^{(l)}(x;u)}{l!N^{l}},\quad \text{as} \quad N\to\infty,
\end{equation} where we recall that the derivatives in the inner summation are with respect to $x$. The coefficient of $N^{-k_p}$ in the asymptotic expansion of \eqref{R_np1 expansion1} is
\begin{equation}\label{sum1}
\sum_{\substack{m , \ell \in \N_0 \\ 2m+\ell=k_p} } \frac{(a_p)^l}{l!}r_{2m}^{(l)}(x;u), \qquad \mbox{and thus} \qquad \Xi_{m}(x;k_p,j_p)=\sum_{\substack{m , \ell \in \N_0 \\ 2m+\ell=k_p} } \frac{(a_p)^l}{l!}\be_{2m,j_p}^{(l)}(x).
\end{equation}
Using the induction hypothesis, for each $m , \ell \in \N_0$ with $2m+\ell=k_p$ we can write 
\begin{equation}
\be_{2m,j_p}^{(l)}(x)=\tilde{c}_{2m,j_p} x^{j_p(\nu-1)+1-2m-\ell} = \tilde{c}_{2m,j_p} x^{j_p(\nu-1)+1-k_p}.
\end{equation}
So $\Xi_{m}(x;k_p,j_p)$ given by \eqref{sum1} must be of the same form as well. Therefore 
\begin{equation} \begin{split}
       \Upsilon_{m}(x;k_1,\ldots,k_p, j_1, \ldots, j_p) & = \prod^{\nu}_{p=1} \Xi_{m}(x;k_p,j_p) \\ & =  A_{m}(k_1,\ldots,k_p, j_1, \ldots, j_p) x^{(J-1)(\nu-1)+\nu-2G},
\end{split}
\end{equation}
for some constant $A_{m}(k_1,\ldots,k_p, j_1, \ldots, j_p)$, where we have used \eqref{k's and j's}. Thus, the $u^{J-1}$ Taylor coefficient of the $N^{-2G}$ coefficient in the large $N$ asymptotic expansion of $F_{\nu}$ is \begin{equation} \begin{split}
     \sum^{M_{\nu}}_{m=1}  \sum_{\substack{k_1, k_2, \ldots, k_{\nu} \in \N_0 \\ k_1 + k_2 + \ldots + k_{\nu} = 2G} } \sum_{\substack{j_1, j_2, \ldots, j_{\nu} \in \N_0 \\ j_1 + j_2 + \ldots + j_{\nu} = J-1} } & \Upsilon_{m}(x;k_1,\ldots,k_p, j_1, \ldots, j_p) = A x^{(J-1)(\nu-1)+\nu-2G}.
\end{split}
\end{equation}
Now, by applying \eqref{freud equation1} we obtain the desired result
\[\beta_{2G,J}(x) = c_{2G,J}x^{J(\nu-1)+1-2G},\] which holds for an arbitrary choice of $(G,J)\in \N_0 \times \N_0$.

Since our inductive argument is on a two-dimensional lattice, some care is needed to complete our inductive argument. We will justify why
our inductive reasoning described above can be used to fill out the finite set of points $(j,G)$  with $j<J$ and the points $(j,g)$ with $g<G$, $j\leq J$ from our base case. Recalling our base case we can immediately apply our inductive step to conclude that $\beta_{2,1}$ satisfies Theorem \ref{theorem r structure}. This will then imply that $\beta_{2,2}$ satisfies Theorem \ref{theorem r structure}. We then repeatedly apply our induction step until we arrive at $\beta_{2,J}$. Furthermore, given our base case and the fact that Theorem \ref{theorem r structure} holds for $\beta_{2,1}$, we immediately apply our inductive step to conclude that $\beta_{4,1}$ also satisfies Theorem \ref{theorem r structure}. This will then imply that $\beta_{4,2}$ satisfies Theorem \ref{theorem r structure}. We then repeatedly apply our induction step until we arrive at $\beta_{4,J}$. There are finitely many iterations of this process until we reach $g=G$ and $j=J$ as desired.

\end{proof}

\begin{remark} \normalfont As an alternative attempt to prove that
\begin{equation}\label{be 2g j}
\be_{2g,j}(x)=c_{2g,j}x^{j(\nu-1)+1-2g},    
\end{equation}
 one can try to directly derive from \eqref{dRdu} the inhomogeneous differential equation satisfied by $\be_{2g,j}$:
\begin{equation}\label{eq 4132}
    \nu J \beta_{2G,J} - x\frac{\dd \beta_{2G,J}}{\dd x} = \lambda_{G,J}x^{J(\nu-1)+1-2G},
\end{equation} where $\lambda_{G,J}$ is a constant. This differential equation provides a  convenient way to compute $\be_{2g,j}$'s in a recursive way and this is what we use to derive all the formulae in Theorem \ref{thm 2legged combinatorics}. However this differential equation in itself does not prove \eqref{be 2g j}, as it suggests that  $$\be_{2g,j}(x)=c_{2g,j}x^{j(\nu-1)+1-2g}+Ax^{\nu J},$$ where $x^{\nu J}$ is the homogeneous solution of the differential equation. The proof of Theorem \ref{theorem r structure} shows that $A=0$.

Here, for completeness, we provide the details of deriving the differential equation \eqref{eq 4132}. Rearranging Equation \eqref{dRdu} we find,
\begin{equation}\label{yolo}
    2\left( \nu u \frac{\ddd \mathcal{R}_n}{\ddd u} + \mathcal{R}_n \right)= N\mathcal{R}_n\left(\mathcal{R}_{n+1} - \mathcal{R}_{n-1}\right).
\end{equation}
Next we equate the $N^{-2G}$ coefficient in Equation \eqref{yolo}. Substituting Equation \eqref{R_np1 expansion} into Equation \eqref{yolo} we find,
\begin{equation}
     \nu u \frac{\ddd r_{2G}(x;u)}{\ddd u} + r_{2G}(x;u)  = \left( \sum_{g=0}^{G} r_{2g}(x;u) \left(  \sum_{h=0}^{G-g}\frac{r^{(2l+1)}_{2h}(x;u)}{(2l+1)!} \right)  \right),
\end{equation}
where $l = G-g-h$. After some re-arranging of terms we are left with,
\begin{multline}\label{induction eq}
     \nu u \frac{\ddd r_{2G}(x;u)}{\ddd u} + r_{2G}(x;u)  - r_0(x;u)\frac{\ddd r_{2G}(x;u)}{\ddd x} - r_{2G}(x;u)\frac{\ddd r_{0}(x;u)}{\ddd x} \\
    =  \sum_{g=1}^{G-1} r_{2g}(x;u)  \left(  \sum_{h=0}^{G-g} \frac{r^{(2l+1)}_{2h}(x;u)}{(2l+1)!} \right) + r_{0}(x;u)  \left(  \sum_{h=0}^{G-1} \frac{r^{(2(G-h)+1)}_{2h}(x;u)}{(2(G-h)+1)!} \right),
\end{multline}
Note that the RHS now only contains the term $r_{2k}(x;u)$, where $k<G$. Thus, all terms that contribute to the $u^J$ power of the RHS satisfy our induction assumption. Notice that  the second term on the RHS of \eqref{induction eq} has the following coefficient of $u^J$
\begin{equation}\label{coeff u^J 2nd}
\sum^{G-1}_{h=0} \sum^J_{k=0} \be_{0,J-k}(x) \frac{\dd^{2(G-h)+1}}{\dd x^{2(G-h)+1}} \be_{2h,k}(x).    
\end{equation} 
In what follows we use the notation
\[ f(x) \overset{\sim}{=} g(x) \] to denote the equation $f(x)=cg(x)$ for some constant $c$ (which may or may not be zero).
For a fixed $0 \leq h \leq G-1$ and $0 \leq k \leq J$, from the induction hypothesis we have 
\[ \be_{0,J-k}(x) \overset{\sim}{=} x^{(J-k)(\nu-1)+1},  \] and
\begin{equation}
\frac{\dd^{2(G-h)+1}}{\dd x^{2(G-h)+1}} \be_{2h,k}(x) \overset{\sim}{=} \begin{cases}
  x^{k(\nu-1)-2G}, & k(\nu-1)-2G \geq 0, \\
 0, & k(\nu-1)-2G < 0.
\end{cases}
\end{equation}
So we have
\begin{equation}\label{coeff u^J 2nd'}
    \be_{0,J-k}(x) \frac{\dd^{2(G-h)+1}}{\dd x^{2(G-h)+1}} \be_{2h,k}(x) \overset{\sim}{=}  \begin{cases}
   x^{J(\nu-1)-2G+1}, & J(\nu-1)-2G \geq 0, \\
 0, & J(\nu-1)-2G < 0.
\end{cases}
\end{equation}

Now we focus on the first term on the RHS of \eqref{induction eq} which has the following coefficient of $u^J$
\begin{equation}\label{coeff u^J 1st} \sum^{G-1}_{g=1}
\sum^{G-g}_{h=0} \sum^J_{k=0} \be_{2g,J-k}(x) \frac{\dd^{2\ell+1}}{\dd x^{2\ell+1}} \be_{2h,k}(x), \qquad \ell = G-g-h.    
\end{equation} For a fixed $1\leq g \leq G-1,$ $0\leq h \leq G-g,$  and $0\leq k \leq J,$ we have 
\begin{equation}
    \be_{2g,J-k}(x) \overset{\sim}{=} \begin{cases}
 x^{(J-k)(\nu-1)+1-2g}, & (J-k)(\nu-1)+1-2g \geq 0, \\
 0, & (J-k)(\nu-1)+1-2g < 0,
\end{cases}
\end{equation}
and
\begin{equation}
\frac{\dd^{2\ell+1}}{\dd x^{2\ell+1}} \be_{2h,k}(x) \overset{\sim}{=} \begin{cases}
 x^{k(\nu-1)-2(G-g)}, & k(\nu-1)-2(G-g) \geq 0, \\
 0, & k(\nu-1)-2(G-g) < 0,
\end{cases},
\end{equation}
 where again $\ell = G-g - h$. We get nonzero terms simultaneously in the last two expressions if $J(\nu-1)+1-2G \geq 0$. Therefore we have
\begin{equation}\label{coeff u^J 1st'}
    \be_{2g,J-k}(x)\frac{\dd^{2\ell+1}}{\dd x^{2\ell+1}} \be_{2h,k}(x) \overset{\sim}{=} \begin{cases}
 x^{J(\nu-1)+1-2G}, & J(\nu-1)+1-2G \geq 0, \\
 0, & J(\nu-1)+1-2G < 0,
\end{cases} 
\end{equation}
Combining  \eqref{coeff u^J 2nd}, \eqref{coeff u^J 2nd'}, \eqref{coeff u^J 1st}, and \eqref{coeff u^J 1st'} we conclude that the coefficient of $u^J$ on the RHS of \eqref{induction eq} is equal to 
\begin{equation}\label{u^J RHS}
 Cx^{J(\nu-1)+1-2G}   
\end{equation}
for some constant $C$, if $J(\nu-1)+1-2G \geq 0$ \footnote{Compare with the condition on $J(\nu-1)-2G$ in \eqref{coeff u^J 2nd'}.
}, and equals zero otherwise.

Now, we focus on the LHS of \eqref{induction eq}. The coefficient of $u^J$ from the terms $\nu u \frac{\ddd r_{2G}(x;u)}{\ddd u} + r_{2G}(x;u)$ can be easily seen to be equal to
\begin{equation}\label{u^J LHS1}
    (\nu J+1) \be_{2G,J}.
\end{equation}
The coefficient of $u^J$ from the term $- r_0(x;u)\frac{\ddd r_{2G}(x;u)}{\ddd x}$ is \begin{equation}\label{u^J LHS2}
    -x \frac{\dd}{\dd x} \be_{2G,J}(x) + A x^{J(\nu-1)+1-2G},
\end{equation}
for some constant $A$, where we have used the fact that $\be_{0,0}(x)=x$. Finally, the coefficient of $u^J$ from the term $ - r_{2G}(x;u)\frac{\partial r_{0}(x;u)}{\partial x}$ equals
\begin{equation}\label{u^J LHS3}
    -\be_{2G,J}+B x^{J(\nu-1)+1-2G},
\end{equation}
for some constant $B$, where again we have used the fact that $\be_{0,0}(x)=x$.

Combining \eqref{u^J RHS}, \eqref{u^J LHS1}, \eqref{u^J LHS2}, and \eqref{u^J LHS3} we obtain the following differential equation for $\beta_{2G,J}(x)$,
\begin{equation}\label{eq 413}
    \nu J \beta_{2G,J} - x\frac{\dd \beta_{2G,J}}{\dd x} = \lambda_{G,J}x^{J(\nu-1)+1-2G},
\end{equation} for some constant $\la_{G,J}$. This is the desired differential equation \eqref{eq 4132}.
    
\end{remark}

Having proved Theorem \ref{theorem r structure} we will now show how to use equation \eqref{dRdu} to recursively derive differential equations \eqref{eq 413} with explicit $\la_{G,J}$. Solving these allows us to explicitly find $\be_{2G,J}(x)$ and thus the numbers $\mathcal{N}_{G}(2\nu,J)$ via \eqref{2legged counts}. From \eqref{beta 0 def} we have

\begin{comment}
    Comparing the first few coefficients of $u$\footnote{\red{Coming from the string eqn}} found in Appendix \ref{0 expansion} we determine that
\end{comment}
\begin{subequations}\label{alpha 0s}
    \begin{eqnarray}
        \beta_{0,0}(x) &=& x,\\
        \beta_{0,1}(x) &=& -{2\nu-1 \choose \nu}x^\nu,\\
        \beta_{0,2}(x) &=& \frac{ (2\nu-1!)^2}{(\nu-1!)^3\nu!}x^{2\nu-1}.
    \end{eqnarray}
\end{subequations}
In order to derive $\beta_{2g,j}(x)$ for $g>0$ we will use \eqref{dRdu}. We show how to derive $r_2(x;u)$ from $r_0(x;u)$, larger values of $g$ can then be determined recursively. Evaluating \eqref{dRdu} at order $N^{-2}$ we find,
\begin{equation}\label{N^{-2} matched}
           \nu u\frac{\ddd r_2}{\ddd u} = r_0\Big(\frac{\ddd ^3r_0}{3!\ddd x^3} + \frac{\ddd r_2}{\ddd x}\Big) + r_2\Big(\frac{\ddd r_0}{\ddd x}-1\Big).
\end{equation}
Substituting in $\beta_{0,0}=x$ (found in equation \eqref{alpha 0s}) and evaluating the above equation at powers of $u^1$ and $u^2$ we find
\begin{eqnarray}
    \nu \beta_{2,1}-x\frac{\dd \beta_{2,1}}{\dd x} &=& \frac{x}{3!}\frac{\dd ^3\beta_{0,1}}{\dd x^3},\label{el3}\\
    2\nu \beta_{2,2} - x\frac{\dd \beta_{2,2}}{\dd x} &=& \frac{\dd}{\dd x}\left(\beta_{2,1}\beta_{0,1}\right) + \frac{1}{6}\left( \beta_{0,1}\frac{\dd ^3 \beta_{0,1}}{\dd x^3} + x \frac{\dd ^3 \beta_{0,2}}{\dd x^3}\right). \label{el4}
\end{eqnarray}

We can solve equation \eqref{el3} to find
 \begin{equation}
    \beta_{2,1}(x) = \lambda x^{\nu} - \frac{(\nu-2)(2\nu -1)!}{2(\nu -2)!(\nu -1)!3!}x^{\nu -2}, 
\end{equation} 
where it remains to find the constant $\lambda$. But from Theorem \ref{theorem r structure} it follows $\beta_{2,1}$ is of degree $x^{\nu-2}$. Hence, $\la=0$ and we conclude
\begin{equation}
    \beta_{2,1}(x) = -\frac{(\nu -2)(2\nu -1)!}{2(\nu -2)!(\nu -1)!3!}x^{\nu -2}.
\end{equation}
We can then use this information to solve \eqref{el4} to find
\[ \beta_{2,2}(x) = \frac{(2+3\nu (\nu -2))(2\nu -1!)^2}{6(\nu -2!)(\nu -1!)^2\nu !}x^{2\nu -3}.\]
Formulae for $\be_{2g,j}$ for larger values of $g$ and $j$ can then be  evaluated recursively using \eqref{dRdu}. See Section \ref{subsec:Qgj-polynomials} for explicit values for $\be_{2g,j}$, when $j=1,2,3$ and $g=0,1,2,3,4,5$ (recall that  $\be_{2g,j}(x)|_{x=1}$ is related to  $\mathcal N_g(2\nu,j)$  by \eqref{2legged counts} and 
    $\mathcal N_g(2\nu,j)
    =
    c_\nu^j Q_{g,j}(\nu)$ by \eqref{eq:Qgj-form}).
\section{The Asymptotic Expansion of $\mathcal{F}_{nN}$}\label{determining free section}
In this section we prove Theorem \ref{free energy theorem} using Theorem \ref{recurrence theorem} and Lemma \ref{sigma lemma}. Combining equations \eqref{uv relation}, \eqref{free relation}, \eqref{r relation} and \eqref{the second f} we find
\begin{equation}\label{second F u}
    (\nu ^2u^2)\frac{\ddd  ^2\mathcal{F}_{nN}}{\ddd  u^2} + \nu (\nu +1)u\frac{\ddd \mathcal{F}_{nN}}{\ddd u}+1/2 = \frac{1}{4x^2}\mathcal{R}_n(\mathcal{R}_{n+1}+\mathcal{R}_{n-1}).
\end{equation}
Recalling \eqref{top exp free energy}, we know that $\mathcal{F}_{nN}(x;u)$ has the topological expansion 
\begin{equation}
    \mathcal{F}_{nN}(x;u) = \sum_{g=0}^\infty \frac{f_{2g}(x;u)}{N^{2g}},
\end{equation} 
where $x = \frac{n}{N}$. Furthermore, $f_{2g}(x;u)$ can be written as a power series in $u$ as
\begin{equation}\nonumber
    f_{2g}(x;u) = \sum_{j=0}^\infty \alpha_{2g,j}(x)u^j.
\end{equation}
The following theorem about the structure of $\alpha_{2g,j}(x)$ follows from the same arguments used in the proof of Theorem \ref{theorem r structure} and the details are left to the reader.
\begin{theorem}\label{theorem f structure} It holds that
\begin{equation}
     f_{2g}(x;u) = \sum_{j=0}^\infty \alpha_{2g,j}(x)u^j,
\end{equation}
where $\alpha_{2g,j}(x) = \tilde{c}_{2g,j}x^{\widetilde{\mathscr{D}}}$ and $\widetilde{\mathscr{D}} = j(\nu-1)-2g$. That is, $\tilde{c}_{2g,j}(x)$ is a monomial in $x$ of degree $\widetilde{\mathscr{D}}$. If $\widetilde{\mathscr{D}}<-2$ then $\alpha_{2g,j}(x)=\tilde{c}_{2g,j} = 0$.
\end{theorem}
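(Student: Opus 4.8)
The plan is to run the argument of Theorem~\ref{theorem r structure} with Equation~\eqref{second F u} playing the role that Equation~\eqref{dRdu} played there. Substitute the topological expansion $\mathcal{F}_{nN}(x;u)=\sum_{g\geq0}N^{-2g}f_{2g}(x;u)$ into the left-hand side of \eqref{second F u}, and the expansions \eqref{R_n expansion} and \eqref{R_np1 expansion} into the right-hand side, then compare the coefficient of $N^{-2g}$. Since the operator $\nu^2u^2\partial_u^2+\nu(\nu+1)u\partial_u$ acts diagonally on the monomials $u^j$, the matched identity for $f_{2g}$ is \emph{algebraic} in the $u$-grading (in contrast with the differential equation \eqref{eq 413} for $\beta_{2G,J}$), so there is no homogeneous solution to discard; this is why the present theorem is the easier of the two structural results and its details were left to the reader.

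Carrying this out: on the left, the $N^{-2g}$ coefficient of \eqref{second F u} is $\nu^2u^2f_{2g}''+\nu(\nu+1)uf_{2g}'+\tfrac12\delta_{g,0}$, where primes denote $\partial_u$; its coefficient of $u^j$ is $\nu j(\nu j+1)\,\alpha_{2g,j}(x)+\tfrac12\delta_{g,0}\delta_{j,0}$. On the right, using $\mathcal{R}_n=\sum_{m}N^{-2m}r_{2m}$ and $\mathcal{R}_{n+1}+\mathcal{R}_{n-1}=2\sum_{m,p\geq0}N^{-2m-2p}r_{2m}^{(2p)}(x;u)/(2p)!$ (the odd-order $x$-derivatives coming from \eqref{R_np1 expansion} cancel), the $N^{-2g}$ coefficient of $\tfrac{1}{4x^2}\mathcal{R}_n(\mathcal{R}_{n+1}+\mathcal{R}_{n-1})$ is $\tfrac{1}{2x^2}\sum_{m_1+m_2+p=g}r_{2m_1}r_{2m_2}^{(2p)}/(2p)!$, whose coefficient of $u^j$ is $\tfrac{1}{2x^2}\sum_{m_1+m_2+p=g}\sum_{j_1+j_2=j}\beta_{2m_1,j_1}(x)\,\beta_{2m_2,j_2}^{(2p)}(x)/(2p)!$. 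By Theorem~\ref{theorem r structure}, whenever it is nonzero $\beta_{2m,k}(x)$ is a monomial of degree $k(\nu-1)+1-2m\geq0$, so each nonzero summand is a monomial in $x$ of degree $(j_1(\nu-1)+1-2m_1)+(j_2(\nu-1)+1-2m_2-2p)=j(\nu-1)+2-2g$; after the factor $x^{-2}$ the entire right-hand side collapses to $\tilde C_{g,j}\,x^{\widetilde{\mathscr{D}}}$ with $\widetilde{\mathscr{D}}=j(\nu-1)-2g$, for a single constant $\tilde C_{g,j}$ (possibly zero).

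Equating the two sides gives, for every $j\geq1$, the algebraic identity $\nu j(\nu j+1)\,\alpha_{2g,j}(x)=\tilde C_{g,j}\,x^{\widetilde{\mathscr{D}}}$, hence $\alpha_{2g,j}(x)=\tilde c_{2g,j}\,x^{\widetilde{\mathscr{D}}}$ with $\tilde c_{2g,j}=\tilde C_{g,j}/(\nu j(\nu j+1))$. The case $j=0$ is invisible to \eqref{second F u} (the operator annihilates the $u^0$ term) and must be handled separately: since $\mathcal{F}_{nN}(u)\big|_{u=0}=\tfrac{1}{n^2}\ln\big(\mathcal{Z}_{nN}(\mathbf{0})/\mathcal{Z}_{nN}(\mathbf{0})\big)=0$, the expansion \eqref{top exp free energy} forces $f_{2g}(x;0)\equiv0$, and hence $\alpha_{2g,0}(x)\equiv0$ for every $g$. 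For the vanishing claim when $\widetilde{\mathscr{D}}<-2$ with $j\geq1$, i.e.\ $j(\nu-1)+2-2g<0$: in any nonzero summand of $\tilde C_{g,j}$ one needs $\beta_{2m_1,j_1}\neq0$, which forces $j_1(\nu-1)+1-2m_1\geq0$, and $\beta_{2m_2,j_2}^{(2p)}\neq0$; because the $(2p)$-th derivative of a monomial $x^d$ with integer $d\geq0$ vanishes unless $d\geq2p$, the latter forces $j_2(\nu-1)+1-2m_2-2p\geq0$. Adding these inequalities yields $j(\nu-1)+2-2g\geq0$, a contradiction, so $\tilde C_{g,j}=0$ and $\alpha_{2g,j}\equiv0$; for $j=0$ (where $\widetilde{\mathscr{D}}=-2g<-2$ iff $g\geq2$) the vanishing is already contained in $\alpha_{2g,0}\equiv0$.

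The term-by-term differentiation of the expansions in $u$ and the matching of powers of $N^{-2}$ are legitimate by the analyticity of $f_{2g}(x;u)$ and $r_{2g}(x;u)$ in a neighbourhood of $(1;0)$ established in \cite{Ercolani-McLaughlin-Pierce}, exactly as in the proof of Theorem~\ref{theorem r structure}. I do not anticipate a genuine obstacle; the step requiring the most care is checking that the right-hand side of the matched equation is a \emph{single} monomial $\tilde C_{g,j}x^{\widetilde{\mathscr{D}}}$, which relies on the \emph{exact} degree $k(\nu-1)+1-2m$ supplied by Theorem~\ref{theorem r structure} and not merely an upper bound, together with the bookkeeping that the $j=0$ slice must be pinned down by the boundary value $\mathcal{F}_{nN}(u)|_{u=0}=0$. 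The elementary derivative-degree fact used in the $\widetilde{\mathscr{D}}<-2$ case is the same one already implicit in the proof of Theorem~\ref{theorem r structure}.
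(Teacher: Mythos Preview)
Your proposal is correct and is exactly the argument the paper has in mind when it says the result ``follows from the same arguments used in the proof of Theorem~\ref{theorem r structure} and the details are left to the reader'': you feed the topological expansions into \eqref{second F u}, use Theorem~\ref{theorem r structure} to see that the right-hand side at order $N^{-2g}$, coefficient $u^j$, is a single monomial of degree $j(\nu-1)-2g$ in $x$, and read off $\alpha_{2g,j}$ algebraically since $\nu j(\nu j+1)\neq0$ for $j\ge1$. Your observation that the left-hand operator is diagonal in the $u$-grading, so no homogeneous solution needs to be ruled out (unlike in \eqref{eq 413}), and your separate handling of $j=0$ via $\mathcal{F}_{nN}(0)=0$, are precisely the points the paper glosses over; the vanishing for $\widetilde{\mathscr{D}}<-2$ via the nonnegativity of the $\beta$-degrees is also correct.
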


Through equation \eqref{second F u} we can relate $f_{2g}$ and $r_{2g}$ by the equations
\begin{eqnarray}
     (\nu^2u^2)\frac{\ddd^2f_0}{\ddd u^2} + \nu(\nu+1)u\frac{\ddd f_0}{\ddd u}+1/2 &=& \frac{r_0^2}{2x^2}, \label{fr rel 1}\\
     (\nu^2u^2)\frac{\ddd ^2f_2}{\ddd u^2} + \nu(\nu+1)u\frac{\ddd f_2}{\ddd u} &=& \frac{r_0}{4x^2}(4r_2+\frac{\ddd ^2r_0}{\ddd x^2}),\label{fr rel 2}\\
     &\vdots&\nonumber
\end{eqnarray}
By comparing coefficients of $u$ in \eqref{fr rel 1} we find, 
\begin{subequations}\label{ab alpha relation rel 1}
    \begin{eqnarray}
    \beta_{0,0}^2 &=& x^2,\\
    \beta_{0,0}\beta_{0,1}, &=& x^2\nu(\nu+1)\alpha_{0,1},\\
    &\vdots&\nonumber
\end{eqnarray}
\end{subequations}
Similarly, comparing coefficients of $u$ in \eqref{fr rel 2} we find,
\begin{subequations}\label{ab alpha relation rel 2}
\begin{align}
   \beta_{0,0}\!\left(2\beta_{2,0}+\frac{\dd^2\beta_{0,0}}{\dd x^2}\right) &= 0, \\
   2(\beta_{0,1}\beta_{2,0}+\beta_{2,1}\beta_{0,0}) 
   + \left(\beta_{0,1}\frac{\dd^2\beta_{0,0}}{\dd x^2} + \beta_{0,0}\frac{\dd^2\beta_{0,1}}{\dd x^2}\right) 
   &= 4x^2\nu(\nu+1)\alpha_{2,1}, \\
   &\vdots \nonumber
\end{align}
\end{subequations}

Using these equations we can deduce a relation between the $\beta_{2g,j}(x)$'s and the $\alpha_{2g,j}(x)$'s. By solving equations \eqref{ab alpha relation rel 1} and \eqref{ab alpha relation rel 2} we can determine $\alpha_{0,1}$ and $\alpha_{2,1}$,
\begin{eqnarray}
    \alpha_{0,1} &=& -\frac{(2\nu-1)!}{\nu!(\nu+1)!}x^{\nu-1},\\
 \alpha_{2,1} &=& -\frac{(2\nu-1)!}{12\nu!(\nu-2)!}x^{\nu-3}.    
\end{eqnarray}
One can then iteratively repeat the arguments to find higher and higher powers of $u$ and $g$ in the free energy expansion. For example:
\begin{eqnarray}
    \alpha_{0,2} &=& \frac{((2\nu-1)!)^2}{4(\nu!)^3(\nu-1)!}x^{2\nu-2},\\
    \alpha_{2,2} &=& \frac{(3\nu-1)((2\nu-1)!)^2}{24(\nu-2)!(\nu-1)!(\nu!)^2}x^{2\nu-4}.
\end{eqnarray}

See Section \ref{subsec:Sgj-polynomials} for explicit values for $\alpha_{2g,j}$, when $j=1,2,3$ and $g=0,1,2,3,4,5$ (recall that  $\alpha_{2g,j}(x)|_{x=1}$ is related to  $\mathscr N_g(2\nu,j)$  by \eqref{numbers and free energy} and 
    $\mathscr N_g(2\nu,j)
    =
    C_\nu^j S_{g,j}(\nu)$ by \eqref{eq:Sgj-form}).

\section{Explicit Polynomials in Theorems \ref{thm:main-N}, \ref{thm:main-Ncal}, \ref{thm 2legged combinatorics}
and \ref{thm combinatorics}}\label{sec:coefficient-tables}
We first explicitly state the polynomials $Q_{g,j}(\nu)$ and $S_{g,j}(\nu)$ introduced in Theorems \ref{thm 2legged combinatorics} and \ref{thm combinatorics}. We use these polynomials, and also further graph counts provided in Appendix \ref{sec further eqns}, to deduce $a_{\ell}^{(g,\nu)}$ and $b_{\ell}^{(g,\nu)}$ which are explicitly stated in Section \ref{sect:a and b state}.

\subsection{The fixed-\texorpdfstring{\((g,j)\)}{(g,j)} polynomials
\texorpdfstring{\(Q_{g,j}\)}{Qgj} and
\texorpdfstring{\(S_{g,j}\)}{Sgj}}
\label{subsec:QS-polynomial-tables}

In this section we list the explicit polynomials \(Q_{g,j}\) and
\(S_{g,j}\) appearing in Theorems~\ref{thm 2legged combinatorics}
and \ref{thm combinatorics}. These were derived in Sections \ref{determining rn section} and \ref{determining free section} respectively.

\subsubsection{The polynomials \texorpdfstring{\(Q_{g,j}\)}{Qgj}}
\label{subsec:Qgj-polynomials}

For \(0\leq g\leq 5\) and \(1\leq j\leq 3\), the polynomials
\(Q_{g,j}\) in \eqref{eq:Qgj-form} are as follows:
{\small
\begin{eqnarray*}
   Q_{0,1}(\nu) &=& 1, \\  
   Q_{0,2}(\nu) &=& 2\nu, \\ 
   Q_{0,3}(\nu) &=& 3\nu(3\nu-1),  \\   
   Q_{1,1}(\nu) &=& \frac{1}{12}\prod_{i=0}^2(\nu-i),  \\
   Q_{1,2}(\nu) &=& \frac{1}{3}\left(3\nu^2-6\nu+2\right)\prod_{i=0}^1(\nu-i),\\
     Q_{1,3}(\nu) &=&\frac{3}{4}\left(17\nu^3-39\nu^2+24\nu-4\right)\prod_{i=0}^1(\nu-i),\\ 
    Q_{2,1}(\nu) &=& \frac{(5\nu-7)}{1440}\prod_{i=0}^4(\nu-i),\\
    Q_{2,2}(\nu) &=& \frac{1 }{360}(2\nu-3)(49\nu^3 -201\nu^2+220\nu-56)\prod_{i=0}^2(\nu-i),\\
    Q_{2,3}(\nu) &=& \frac{1}{160}(3\nu-5)(539\nu^4-1788\nu^3+2005\nu^2-856\nu+112)\prod_{i=0}^2(\nu-i),\\
    Q_{3,1}(\nu) &=& \frac{1}{362880}\left(35\nu^2-147\nu+124\right)\prod_{i=0}^6(\nu-i),\\
    Q_{3,2}(\nu) &=& \frac{1}{45360}(2\nu-5)\bigg(1181\nu^5 -9883\nu^4+29848\nu^3-40538\nu^2+23976\nu -4464\bigg) \prod_{i=0}^3(\nu-i),\\
    Q_{3,3}(\nu) &=& \frac{1}{4480}(3\nu-7)\bigg(8621\nu^7 - 78417\nu^6 + 288943\nu^5-555499\nu^4+594372\nu^3  -346452\nu^2  \\&&\,+98272\nu - 9920\bigg)  \prod_{i=0}^2(\nu-i), \\
    Q_{4,1}(\nu) &=& \frac{1}{87091200}\left(175\nu^3-1470\nu^2+3509\nu-2286\right)\prod_{i=0}^8(\nu-i),\\
    Q_{4,2}(\nu) &=& \frac{1}{10886400}\bigg(21015\nu^6 - 248463\nu^5 + 1108499\nu^4 - 2386617\nu^3 + 2597902\nu^2 - 1313808\nu  \\&&\,+ 219456\bigg) (2\nu - 5)(2\nu - 7) \prod_{i=0}^4(\nu-i) ,
            \end{eqnarray*} }
{\small
\begin{eqnarray*} 
 Q_{4,3}(\nu) &=& \frac{1}{1075200} \bigg( 2805887\nu^{10} - 46719825\nu^9 + \\&&\,338126378\nu^8 - 1396332194\nu^7  + 3628412663\nu^6  - 6163425041\nu^5 + 6874078128\nu^4 \\&& - 4909790588\nu^3 + 2108489904\nu^2 - 476570112\nu    + 40965120 \bigg)\prod_{i=0}^3(\nu-i),\\
    Q_{5,1}(\nu) &=& \frac{1}{11496038400} \left(385\nu^4 - 5390\nu^3 + 24959\nu^2 - 44242\nu + 24528\right) \prod_{i=0}^{10}(\nu-i),\\[1pt]
        Q_{5,2}(\nu) &=& \frac{1}{718502400}(2\nu - 7)(2\nu - 9)\bigg(168155\nu^8 - 3106577\nu^7 + 23488479\nu^6  - 94884829\nu^5 \\&&\, +  223426562\nu^4 - 312172674\nu^3 + 249503444\nu^2 \\[1pt]
    &&\, - 101165280\nu + 14716800\bigg)\prod_{i=0}^5(\nu-i),\\[1pt]
    Q_{5,3}(\nu) &=& \frac{1}{141926400}(3\nu- 11)\bigg(46360603\nu^{11} - 880543553\nu^{10} + 7377406270\nu^9 - 35895463278\nu^8  \\&&\, + 112326954267\nu^7  - 236357283609\nu^6  + 339283640108\nu^5  - 329560955560\nu^4 \\&&\, + 209749893152\nu^3  - 81769381200\nu^2 + 17052537600\nu  - 1373568000\bigg)\prod_{i=0}^4(\nu-i).
\end{eqnarray*}}

\subsubsection{The polynomials \texorpdfstring{\(S_{g,j}\)}{Sgj}}
\label{subsec:Sgj-polynomials}

For \(0\leq g\leq 5\) and \(1\leq j\leq 3\), the polynomials
\(S_{g,j}\)  in \eqref{eq:Sgj-form} are as follows: 
{\small \begin{flalign*}
  S_{0,1}(\nu) &= 1, & \\
S_{0,2}(\nu) &= \frac{1}{2}(\nu+1)^2\nu, & \\
S_{0,3}(\nu) &= (\nu+1)^3\nu^3, & \\
S_{1,1}(\nu) &= \frac{1}{12}(\nu+1)\nu(\nu-1), & \\
S_{1,2}(\nu) &= \frac{1}{12}(\nu+1)^2\nu^2(3\nu-1)(\nu-1), & \\
S_{1,3}(\nu) &= \frac{1}{12}\left(17\nu^2-13\nu+2\right) (\nu+1)^3\nu^3(\nu-1), & \\
S_{2,1}(\nu) &= \frac{1}{1440}(5\nu-2)\prod_{i=-1}^3(\nu-i) , & \\
S_{2,2}(\nu) &= \frac{1}{1440 }(\nu+1)^2\nu^2 (2\nu-3)(49\nu^2-43\nu+6)\prod_{i=1}^2(\nu-i), & \\
S_{2,3}(\nu) &= \frac{1}{480} (\nu+1)^3\nu^3(\nu-1) \left(539\nu^5-2356\nu^4+3677\nu^3-2460\nu^2+660\nu-48\right), & \\
S_{3,1}(\nu) &= \frac{1}{362880} \left(35\nu^2-77\nu+12\right)\prod_{i=-1}^5(\nu-i), & \\ 
S_{3,2}(\nu) &= \frac{1}{181440}(\nu+1)^2\nu^2 (2\nu-5)(1181\nu^4 - 4282\nu^3+4969\nu^2-1868\nu+120) \times \prod_{i=1}^3(\nu-i),& 
\end{flalign*}
\begin{flalign*}
S_{3,3}(\nu) &= \frac{1}{13440}\bigg(8621\nu^7 -67098\nu^6+207750\nu^5-326324\nu^4 \\&+273029\nu^3-115578\nu^2  +20560\nu-800\bigg)  (\nu+1)^3\nu^3\prod_{i=1}^2(\nu-i) , & \\ 
S_{4,1}(\nu) &= \frac{1}{87091200} \left(175\nu^3 - 945\nu^2 + 1094\nu - 72\right)\prod_{i=-1}^7(\nu-i), & \\
S_{4,2}(\nu) &= \frac{1}{43545600}(2\nu - 5)(2\nu - 7)\bigg(21015\nu^5 - 117163\nu^4 + 228063\nu^3  \\ &- 182453\nu^2  + 50034\nu - 1512\bigg)  (\nu+1)^2\nu^2\prod_{i=1}^4(\nu-i), & \\
S_{4,3}(\nu) &= \frac{1}{9676800}\bigg(2805887\nu^9 - 33646824\nu^8 + 170341574\nu^7 - 473605544\nu^6  \\ & + 786759767\nu^5 - 794026448\nu^4  + 471186660\nu^3 - 149071904\nu^2 \\
& + 19693632\nu  - 376320\bigg) (\nu+1)^3\nu^3\prod_{i=1}^3(\nu-i), & \\
    S_{5,1}(\nu) &= \frac{1}{11496038400} \left(385\nu^4 - 3850\nu^3 + 11099\nu^2 - 8954\nu + 240\right)\prod_{i=-1}^9(\nu-i), & \\
S_{5,2}(\nu) &= \frac{1}{2874009600}(2\nu - 7)(2\nu - 9)\bigg(168155\nu^7 - 1803472\nu^6 + 7641252\nu^5 \\ & - 16263590\nu^4 + 18157345\nu^3  - 9913818\nu^2 + 2014128\nu & \\ & - 25920\bigg) (\nu+1)^2\nu^2 \times \prod_{i=1}^5(\nu-i), & \\ 
S_{5,3}(\nu) &= \frac{1}{425779200}\bigg( 46360603\nu^{12} - 973391694\nu^{11} + 9018453443\nu^{10} - 48560689270\nu^9  \\ & + 168394080893\nu^8  - 393534106562\nu^7 + 629719954801\nu^6 \\
&\,- 686021525378\nu^5  + 494760354900\nu^4 - 222565585336\nu^3 + 55430820000\nu^2 \\ &- 5767948800\nu  + 48384000 \bigg)(\nu+1)^3\nu^3\prod_{i=1}^3(\nu-i). &
\end{flalign*}
}

\subsection{The coefficient polynomials $b_\ell^{(g,\nu)}$ and \(a_\ell^{(g,\nu)}\) }\label{sect:a and b state}

In this section we list the coefficient polynomials appearing in
Theorems~\ref{thm:main-N} and \ref{thm:main-Ncal}.  These are the
coefficients \(b_\ell^{(g,\nu)}\) and \(a_\ell^{(g,\nu)}\) appearing in
\eqref{eq:main-N} and \eqref{eq:main-Ncal}, respectively. The method for determining these coefficients explicitly was outlined in
Section~\ref{Sec Roadmap}.

\subsubsection{Coefficients for \(\mathscr N_g(2\nu,j)\)}\label{Sec explicit b coeffs}

We first list the coefficients \(b_\ell^{(g,\nu)}\) for
\(g=2,3,4\).  For each such \(g\), the coefficients
\(b_\ell^{(g,\nu)}\), \(0\leq \ell\leq 3g-3\), are polynomials in
\(\nu\) of degree \(3g-3\).

\begin{itemize}
\item The 4 genus-2 coefficients in~\eqref{eq:main-N} are all cubic polynomials in $\nu$ and are given by:
{\small
\begin{flalign*}
b^{(2,\nu)}_0 &= \frac{-1}{2880}\big( 12+ 80\nu+ 71\nu^2+8\nu^3\big),&\\
b^{(2,\nu)}_1 &= \frac{\nu}{1440}\big( 40+ 98\nu+31\nu^2\big),& \\
b^{(2,\nu)}_2 &= \frac{-\nu^2}{576}\big(25+22\nu\big),& \\
b^{(2,\nu)}_3 &= \frac{7}{360}\nu^3. &   
\end{flalign*}
}
\item The 7 genus-3 coefficients in~\eqref{eq:main-N} are all polynomials of degree $6$ in $\nu$ and are given by:
{\small
\begin{flalign*}
b^{(3,\nu)}_0 &= \frac{1}{725760}\big( 720 +  22176 \nu +  103996 \nu^2+  148106 \nu^3 +  70537 \nu^4 + 9168 \nu^5 + 32\nu^6\big),& \\
b^{(3,\nu)}_1 &= \frac{-\nu}{120960}\big(3696 + 40302 \nu + 105063 \nu^2+ 88751 \nu^3+ 23726\nu^4 + 1352\nu^5\big),& \\
b^{(3,\nu)}_2 &= \frac{\nu^2}{51840}\big(9844 + 59892 \nu + 92779 \nu^2 + 43983 \nu^3 + 5137\nu^4 \big),& \\
b^{(3,\nu)}_3 &= \frac{-\nu^3}{362880} \big(178108 + 644796 \nu + 560697 \nu^2 + 115989\nu^3\big),& \\
b^{(3,\nu)}_4 &= \frac{\nu^4}{6912}\big(4311 + 8764\nu + 3324\nu^2\big),& \\
b^{(3,\nu)}_5 &= \frac{-\nu^5}{864}\big(335 + 297\nu\big),& \\
b^{(3,\nu)}_6 &= \frac{245}{2592}\nu^6.  &  
\end{flalign*}
}
\item The 10 genus-4 coefficients in~\eqref{eq:main-N} are all polynomials of degree $9$ in $\nu$ and are given by:
{\small \begin{flalign*}
     b^{(4,\nu)}_0 &= \frac{-1}{87091200}\bigg(60480 + 6091776 \nu + 69138396 \nu^2 + 271690872 \nu^3 + 
  465121035 \nu^4  \\ & + 369591027 \nu^5 + 131702178 \nu^6 + 17530000 \nu^7 + 
  298048 \nu^8 - 27008 \nu^9\bigg), & \\
b^{(4,\nu)}_1 &= \frac{\nu}{87091200}\bigg(6091776 + 152189712 \nu + 1008188924 \nu^2 + 
  2656587008 \nu^3  + 3172645503 \nu^4\\ & + 1753874888 \nu^5 + 
  417930588 \nu^6 ,+ 33675968 \nu^7 + 264640 \nu^8\bigg),& \\
b^{(4,\nu)}_2 &= \frac{-\nu^2}{87091200}  \bigg(83051316 + 1215477348 \nu + 5407498229 \nu^2 + 
  9947570376 \nu^3\\& + 8266710762 \nu^4 + 3054981038 \nu^5 + 
  440225473 \nu^6 + 16310128 \nu^7\bigg),& \\
b^{(4,\nu)}_3 &= \frac{\nu^3}{87091200}\bigg(   478979296 + 4713790504 \nu + 14650381372 \nu^2 + 
  18758897792 \nu^3  \\& + 10420470078 \nu^4 + 2327628744 \nu^5 + 
  154051144 \nu^6\bigg),& \\
  b^{(4,\nu)}_4 &= \frac{-\nu^4}{87091200}\bigg(1497758248 + 10303469792 \nu + 22295920990 \nu^2 + 
  19083694728 \nu^3 \\ &+ 6406095591 \nu^4 + 656850999 \nu^5\bigg),& \end{flalign*}
  \begin{flalign*}
  b^{(4,\nu)}_5 &= \frac{\nu^5}{87091200}\bigg(2797604320 + 13403430040 \nu + 19389912360 \nu^2 + \\&
  10028168640 \nu^3  + 1544787615 \nu^4\bigg),&\\
  b^{(4,\nu)}_6 &= \frac{-\nu^6}{87091200}\big(3221868790 + 10320667420 \nu + 9021248320 \nu^2 + 
  2140698280 \nu^3\big),& \\[6pt]
  b^{(4,\nu)}_7 &= \frac{\nu^7}{87091200}\big(2248560160 + 4352240480 \nu + 1745323720 \nu^2\big),&\\[6pt]
b^{(4,\nu)}_8 &= \frac{-\nu^8}{87091200}\big(873846400  + 775944400 \nu\big),& \\[6pt]
b^{(4,\nu)}_9 &= \frac{259553}{155520}\nu^9.&
\end{flalign*}
}
\end{itemize}

\subsubsection{Coefficients for \(\mathcal N_g(2\nu,j)\)}\label{Sec explicit a coeffs}

We next list the coefficients \(a_\ell^{(g,\nu)}\) for
\(g=1,2,3,4\).  For each such \(g\), the coefficients
\(a_\ell^{(g,\nu)}\), \(0\leq \ell\leq 3g-1\), are polynomials in
\(\nu\) of degree \(3g-1\).

 \begin{itemize}
 \item  The 3 genus-1 coefficients in~\eqref{eq:main-Ncal} are all quadratic polynomials in $\nu$ and are given by: 
{\small \begin{flalign*}
a^{(1,\nu)}_0 &= \frac{\nu}{12}\big(2+\nu\big), & \\
a^{(1,\nu)}_1 &= \frac{-\nu}{12}\big(2+3\nu\big), & \\
a^{(1,\nu)}_2 &= \frac{1}{6}\nu^2, &
\end{flalign*}}
    \item  The 6 genus-2 coefficients in~\eqref{eq:main-Ncal} are all polynomials of degree $5$ in $\nu$ and are given by:
{\small \begin{flalign*}
    a^{(2,\nu)}_0 &= \frac{-\nu}{480}\big(56 + 302 \nu + 383 \nu^2 + 130 \nu^3 + 8 \nu^4\big), & \\
    a^{(2,\nu)}_1 &= \frac{\nu}{1440}\big(168 + 2114 \nu + 4985 \nu^2 + 3102 \nu^3 + 428 \nu^4\big), & \\
    a^{(2,\nu)}_2 &= \frac{-\nu^2}{1440}\big(1208 + 6716 \nu + 7802 \nu^2 + 1969 \nu^3\big), & \\
a^{(2,\nu)}_3 &= \frac{\nu^3}{288}\big(576 + 1582 \nu + 745 \nu^2\big), &\\
a^{(2,\nu)}_4 &= \frac{-\nu^4}{72}\big(141 + 157 \nu\big), &\\
a^{(2,\nu)}_5 &= \frac{49}{72}\nu^5. &
\end{flalign*}}
\item The 9 genus-3 coefficients in~\eqref{eq:main-Ncal} are all polynomials of degree $8$ in $\nu$ and are given by:
{\small \begin{flalign*}
a^{(3,\nu)}_0 &= \frac{\nu}{72576} \bigg(17856 + 235296 \nu + 939236 \nu^2 + 1505064 \nu^3 + 
   1032603 \nu^4 + 285860 \nu^5 \\ & + 24472 \nu^6 + 64 \nu^7\bigg), &\\
a^{(3,\nu)}_1 &= \frac{-\nu}{362880}\bigg(89280 + 2588256 \nu + 17470540 \nu^2 + 43350840 \nu^3 + 
   45171237 \nu^4 \\&+ 19790842 \nu^5 + 3202640 \nu^6 + 122384 \nu^7\bigg), &\\
a^{(3,\nu)}_2 &= \frac{\nu^2}{120960}\bigg(470592 + 7034376 \nu + 29599732 \nu^2 + 
   47839718 \nu^3 + 31864925 \nu^4 \\ & + 8166599 \nu^5 + 591434 \nu^6\bigg), &\\
   a^{(3,\nu)}_3 &= \frac{-\nu^3}{51840}\bigg(1189824 + 11112596 \nu + 30497468 \nu^2+ 
   31533303 \nu^3 + 12291699 \nu^4 \\ &+ 1410522 \nu^5\bigg), &\\
a^{(3,\nu)}_4 &= \frac{\nu^4}{51840}\big(3544928 + 21617504 \nu + 37979568 \nu^2 + 
   22989726 \nu^3 + 4013349 \nu^4\big), &\\
a^{(3,\nu)}_5 &= \frac{-\nu^5}{17280}\big(1969104 + 7691608 \nu + 7913786 \nu^2 + 2145687 \nu^3\big),\\
a^{(3,\nu)}_6 &= \frac{\nu^6}{2592}\big(279762 + 640168 \nu + 295069 \nu^2\big), &\\
a^{(3,\nu)}_7 &= \frac{-\nu^7}{2592}\big(140998 + 144559 \nu\big), &\\
a^{(3,\nu)}_8 &= \frac{1225}{108}\nu^8. &
\end{flalign*}}

\item The 12 genus-4 coefficients in~\eqref{eq:main-Ncal} are all polynomials of degree $11$ in $\nu$ and are given by:
{\small \begin{flalign*}
    a^{(4,\nu)}_0 &=\frac{-\nu}{87091200} \bigg(92171520 + 2098742688 \nu + 16092283032 \nu^2 + 
   56367784900 \nu^3  \\ & + 100912028042 \nu^4  + 95941872033 \nu^5  + 
   47857995514 \nu^6 + 11645825128 \nu^7  \\ &++ 1123745952 \nu^8  
   13504960 \nu^9 - 1134336 \nu^{10}\bigg), & \\
a^{(4,\nu)}_1 &= \frac{\nu}{87091200} \bigg(92171520 + 4497305760 \nu + 56186738136 \nu^2 + 
   289168376484 \nu^3 \\ & + 726203633242 \nu^4 
   + 953850310201 \nu^5  + 
   664891212498 \nu^6  + 237899049736 \nu^7  \\&+ 39460597200 \nu^8 + 
   2326824640 \nu^9 + 12389120 \nu^{10}\bigg), & \\
   a^{(4,\nu)}_2 &=\frac{-\nu^2}{87091200} \bigg(2398563072 + 64510638912 \nu + 
   542085293504 \nu^2  + 2002789878500 \nu^3  \\ &+ 3693545234356 \nu^4 + 
   3559327488365 \nu^5  + 1780743266214 \nu^6  \\
   &+ 434681953116 \nu^7  + 
   44174048544 \nu^8 + 1204524992 \nu^{9}\bigg), & \\
   a^{(4,\nu)}_3 &=\frac{\nu^3}{87091200}\bigg(24416183808 + 442459038240 \nu + 
   2671980151700 \nu^2  + 7254888306748 \nu^3  \\ & + 9821647137541 \nu^4 
   + 
   6796178238906 \nu^5 + 2320687841608 \nu^6 \\ & + 347315456984 \nu^7 + 
   16360414736 \nu^{8}\bigg),& \\
a^{(4,\nu)}_4 &=\frac{-\nu^4}{87091200} \bigg(133174336320 + 1736002857024 \nu + 
   7709497511976 \nu^2  + 15373668288148 \nu^3 \\ & + 14950112833628 \nu^4 + 
   7062752648152 \nu^5 + 1479132827021 \nu^6 + 102674086346 \nu^{7}\bigg),&\\
a^{(4,\nu)}_5 &=\frac{\nu^5}{87091200} \bigg(441520978624 + 4234961647976 \nu + 
   13818569757108 \nu^2  + 19799971981928 \nu^3 \\ &  +13148780770332 \nu^4 + 
   3810551222121 \nu^5 + 370238758206 \nu^{6}\bigg),&
      \end{flalign*}}

{\small \begin{flalign*}
a^{(4,\nu)}_6 &=\frac{-\nu^6}{87091200}\bigg(944715646560 + 6660438078560 \nu + 
   15620288203240 \nu^2  + 15287477780820 \nu^3 \\ & + 6228745467280 \nu^4 + 
   837587645685 \nu^{5}\bigg),&\\
a^{(4,\nu)}_7 &=\frac{\nu^7}{87091200} \bigg(1336183743440 + 6772555031480 \nu + 
   10850932667540 \nu^2  + 6516572243020 \nu^3 \\ &+ 1232139788705 \nu^{4}\bigg),&\\
a^{(4,\nu)}_8 &=\frac{-\nu^8}{87091200}\bigg(1243814173840  + 4307716419440 \nu + 
   4235681481360 \nu^2  + 1180572677480 \nu^{3}\bigg),&\\
a^{(4,\nu)}_9 &=\frac{\nu^9}{87091200} \big(733890670800 + 1560046779200 \nu + 
   711981918200 \nu^{2}\big),&\\[6pt]
a^{(4,\nu)}_{10} &= \frac{-\nu^{10}}{87091200} (249065196800  + 245759637200 \nu),& \\[6pt]
a^{(4,\nu)}_{11} &= \frac{4412401}{10368}\nu^{11}&.
\end{flalign*}}
\end{itemize}

\section{Conclusion}

In this paper, we determine explicit formulae for $\mathscr{N}_g(2\nu,j)$ and $\mathcal{N}_g(2\nu,j)$ which hold for general $j$ and $\nu$, and for fixed $g\leq 4$. This then leads to formulae for the asymptotic behavior of $\mathscr{N}_g(2\nu,j)$ and $\mathcal{N}_g(2\nu,j)$ as $\nu\to\infty$. To the best of the authors' knowledge, the results for counts and asymptotics for genus $g\geq 2$ are new to the literature. The method presented here can readily be extended to higher genus, only demanding additional computational cost. 

Proving the Conjectures \ref{conj:b-polynomial} and \ref{conj:a-polynomial} as well as the conjectures raised in Remark \ref{conjecture remark} concerning the polynomials $Q_{g,j}(\nu)$ and $S_{g,j}(\nu)$ would be an interesting direction of future research and would extend many of the results presented here to higher genus (at least structurally). Another interesting avenue of future research would be to determine a formula for $\mathscr{N}_g(2\nu,j)$ (and $\mathcal{N}_g(2\nu,j)$) which holds for general $g$ and fixed $\nu$ and $j$. 

As we have seen, the case of general \(j\), with \(\nu\) and \(g\)
fixed (see ~\eqref{bleher cubic sphere}--\eqref{N j+1 2} and
Theorem~\ref{thm hexic counts}), and the case of general \(\nu\), with
\(j\) and \(g\) fixed (see ~Theorem~\ref{thm combinatorics}), both lead
to nice closed-form expressions. It is therefore natural to ask whether the
remaining case of general \(g\), with \(j\) and \(\nu\) fixed, also
admits a nice closed-form expression. However, if such an expression is found, it must be fundamentally different from those found for general $j$ and $\nu$. This can be seen by recognizing that $\mathscr{N}_g(2\nu,j)$ denotes the number of connected labeled $2\nu$-valent graphs with $j$ vertices on a compact Riemann surface of genus $g$ that cannot be realized on Riemann surfaces of smaller genus. Thus, if one fixes $j$ and $\nu$, there will be a critical value $g_c$ where $\mathscr{N}_g(2\nu,j)=0$ for $g>g_c$. Hence, there will be infinitely many $g$'s for which $\mathscr{N}_g(2\nu,j)=0$, which means that $\mathscr{N}_g(2\nu,j)$, for general $g$ with fixed $j$ and $\nu$ cannot be a rational expression of $g$ (compare with Theorems \ref{thm combinatorics} and \ref{thm hexic counts}).

\begin{appendices} 

\section{Proof of Equations \eqref{eq:main-N} and \eqref{eq:main-Ncal}}\label{sect: dfi form}

We will prove a general formula from which equations \eqref{eq:main-N} and \eqref{eq:main-Ncal} will follow immediately. First, observe that for positive $j$ the following identity holds,
\begin{equation}
    (-j)_k =(-1)^k\frac{j!}{(j-k)!},
\end{equation}
where $(a)_k$ is the rising factorial \cite[Section 5.2]{NIST:DLMF}. Thus,
\begin{eqnarray}
    \pFq{2}{1}{-j,-\lambda_j}{-G-\ell-j}{\tfrac{1}{1-\nu}} &=& \sum^{j}_{s=0} \frac{(-j)_s(-\lambda_j)_s}{(-G-\ell-j)_s}\frac{(1-\nu)^{-s}}{s!},\nonumber \\
    &=& \frac{j!(\lambda_j)!}{(G+\ell+j)!} \sum^{j}_{s=0} \frac{(G+\ell + j-s)!}{(j-s)!(\lambda_j-s)!}\frac{(\nu-1)^{-s}}{s!},\nonumber \\
    &=& \frac{j!(\lambda_j)!}{(G+\ell+j)!} \sum^{j}_{r=0} \frac{(G+\ell + r)!}{(r)!(\lambda_j-(j-r))!}\frac{(\nu-1)^{r-j}}{(j-r)!}.\nonumber
\end{eqnarray}
The first equality holds form the definition of the hypergeometric function \cite[Section 15.2]{NIST:DLMF} and the last inequality holds by making the change of variables $r=j-s$. Hence, we find that
\begin{multline}
    (\nu - 1)^j  \sum_{\ell = 0}^{L} \left(c^{(g,\nu)}_{\ell} {G + \ell+j   \choose j}
\ \pFq{2}{1}{-j,\lambda_j}{-G-\ell-j}{\tfrac{1}{1-\nu}}\right)\\ =    \sum_{\ell = 0}^{L} \left(c^{(g,\nu)}_{\ell} \frac{(\lambda_j)!}{(G+\ell)!}
\sum^{j}_{r=0} \frac{(G+\ell + r)!}{(r)!(\lambda_j-(j-r))!}\frac{(\nu-1)^{r}}{(j-r)!}\right).\nonumber
\end{multline}
Interchanging the sum and gathering terms on the right hand side of the above equation we conclude that
\begin{multline}
    (\nu - 1)^j  \sum_{\ell = 0}^{L} \left(c^{(g,\nu)}_{\ell} {G + \ell+j   \choose j}
\ \pFq{2}{1}{-j,\lambda_j}{-G-\ell-j}{\tfrac{1}{1-\nu}}\right)\\ =    \sum^{j}_{r=0} (\nu-1)^{r}\binom{\lambda_j}{j-r} \sum_{\ell = 0}^{L}   c^{(g,\nu)}_{\ell} \binom{G+\ell+r}{r}
.\label{eq:gen diff form}
\end{multline}
Equations \eqref{eq:main-N} and \eqref{eq:main-Ncal} now follow immediately from \eqref{eq:gen diff form}, and equations \eqref{eq:N-ELT-hypergeom} and \eqref{eq:Ncal-ELT-hypergeom}.

\section{Combinatorics of $6$-valent Graphs with Arbitrary Number of Vertices}\label{Hexic section}
In this section we derive the topological expansion of $\mathcal{R}_n$ and $\mathcal{F}_{nN}$ for the hexic weight
\[\mathscr{V}(z;u) = \frac{z^2}{2}+u\frac{z^6}{6},\]
by extending the method presented in \cite{BGM}. We begin with proving the hexic Freud equation
\begin{equation}\label{sextic string} \begin{split}
      x & =  \mathcal{R}_n\bigg(1 + u( \mathcal{R}_{n+2}\mathcal{R}_{n+1} + \mathcal{R}_{n+1}^2 + 2\mathcal{R}_n\mathcal{R}_{n+1} 
      +\mathcal{R}_n^2 + 2\mathcal{R}_n\mathcal{R}_{n-1}\\ &   \qquad  + \mathcal{R}_{n+1}\mathcal{R}_{n-1} + \mathcal{R}_{n-1}^2 + \mathcal{R}_{n-1}\mathcal{R}_{n-2}) \bigg).
\end{split}
\end{equation}

This is a standard proof in orthogonal polynomial theory which we include for completeness. Using integration by parts we find that in the case $\nu=3$,
\begin{eqnarray*}
     nh_{n-1} &=& \int_\Gamma \left( \frac{\dd}{\dd z}\mathcal{P}_n(z)\right) \mathcal{P}_{n-1}(z) e^{-N\mathscr{V}(z)} \dd z,\\
     &=&  -\int_\Gamma \mathcal{P}_n(z) \left( \frac{\dd}{\dd z}\mathcal{P}_{n-1}(z) e^{-N\mathscr{V}(z)}\right) \dd z,\\
     &=&  N \int_\Gamma \left(\frac{\dd}{\dd z}V(z)\right)\mathcal{P}_n(z) \mathcal{P}_{n-1}(z) e^{-N\mathscr{V}(z)} \dd z,\\
      &=&  N \int_\Gamma \left(uz^5+z\right)\mathcal{P}_n(z) \mathcal{P}_{n-1}(z) e^{-N\mathscr{V}(z)} \dd z.
\end{eqnarray*}
We can use equation \eqref{OP rec} (recalling the notation $\ga^2_n \equiv \mathcal{R}_n$ and the fact that $\be_n=0$ for the hexic weight) to calculate the last equality to arrive at
Equation \eqref{sextic string} by letting $x = \frac{n}{N}$.

Note that the form of the RHS of \eqref{sextic string} is dependent on the choice $\nu=3$, and the RHS will become increasingly complicated as $\nu$ becomes larger. Substituting equation \eqref{R_np1 expansion} into \eqref{sextic string} one can determine $r_{2g}$ for as large a $g$ as desired, albeit with increasing effort. Comparing the first two coefficients of $N$ ($N^0$ and $N^{-2}$) yields the equations:
\begin{eqnarray}
    r_0+10ur_0^3 &=& x, \label{r0 cubic}\\
    r_2(1+30ur_0^2) &=& -5ur_0((r_0')^2 + 2r_0r_0'') \label{r2 OG},
\end{eqnarray}
where we remind the reader that the derivative is respect to $x$. Solving \eqref{r0 cubic} we find
\begin{equation}\label{r0 sol}
    r_0 = u^{-\frac{1}{3}}\left( \Big( \frac{x}{20} + \left[\frac{x^2}{400} + \frac{1}{30^3u}\right]^{1/2}\Big)^{1/3} + \Big( \frac{x}{20} - \left[ \frac{x^2}{400} + \frac{1}{30^3u}\right]^{1/2}\Big)^{1/3}  \right).
\end{equation}
As was the case in \cite{BGM} we can derive an explicit expression for $r_0$. We note that obtaining an explicit expression is possible for $\nu=2$ and $\nu=3$. However, since finding an explicit expression for $r_0$ is equivalent to solving an algebraic equation of degree $\nu$, this problem becomes intractable as $\nu$ becomes larger (see equation \eqref{Order zero String}).\\

Solving equation \eqref{r2 OG} for $r_2$ we find
\begin{equation}\label{r2 sol}
    r_2 = \frac{-5ur_0((r_0')^2 + 2r_0r_0'') }{1+30ur_0^2}.
\end{equation}
A formula for $r_{2g}$ can be found inductively for any $g$ by comparing coefficients of $N^{-2g}$ in \eqref{sextic string} for larger and larger $g$. For example, comparing the $N^{-4}$ coefficient we find
\begin{multline}
    r_4 = \frac{1}{12(1+30u(r_0)^2}\big( -360 u r_0 (r_2)^2 - 60 u r_2 (r_0')^2 - 
  120 u r_0 r_0' r_2' - 
  240 u r_0 r_2 r_0'' - \\
  33 u r_0 (r_0'')^2 - 
  120 u (r_0)^2 r_2'' - 
  44 u r_0 r_0'r_0^{(3)} - 22u(r_0)^2r_0^{(4)} \big).
\end{multline}

It remains to find a nice expression for the coefficients of $u^j$ of $r_2(u)$ and $r_4(u)$. We will detail the process for $r_2(u)$ which can then be generalized to $r_4(u)$. Our approach involves the same techniques as was used in \cite{BD} and \cite{Ercolani-McLaughlin-Pierce}.

\begin{lemma}\label{explicit r2}
For the hexic weight, the $N^{-2}$ coefficient of $\mathcal{R}_n$ has the series expansion,
\begin{equation}
    r_{2} = \sum_{j=1}^\infty c_{2,j} u_j, 
\end{equation}
where 
\begin{eqnarray*}
c_{2,1} &=& \frac{x}{2},\\
    c_{2,j\geq2} &=& \frac{(-10)^j}{2}x^{2j-1}\bigg( 10 {3j \choose j-2} {}_2F_1(3,2-j,3+2j,-2)\\&& \, +{3j \choose j-1} {}_2F_1(3,1-j,2+2j,-2)\bigg).
\end{eqnarray*}
\end{lemma}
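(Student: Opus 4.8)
## Proof strategy for Lemma \ref{explicit r2}

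The plan is to extract the Taylor coefficients of $r_2(x;u)$ in the variable $u$ directly from the closed-form expression \eqref{r2 sol}, namely
\[
r_2 = \frac{-5u\,r_0\big((r_0')^2 + 2r_0 r_0''\big)}{1+30u r_0^2},
\]
by combining three ingredients: (i) the fact, established in Theorem \ref{theorem r structure} (equivalently Remark \ref{string remark}), that $r_0(x;u)$ has a $u$-expansion whose coefficients are monomials in $x$; (ii) an explicit formula for those coefficients via Lagrange inversion applied to the cubic \eqref{r0 cubic}; and (iii) a resummation of the resulting convolution into a ${}_2F_1$. First I would record that, setting $r_0 = x\,\rho$ where $\rho = \rho(\xi)$ solves $\rho + \xi \rho^3 = 1$ with $\xi = 10 u x^2$, Lagrange–Bürmann inversion gives
\[
\rho = \sum_{k=0}^\infty \binom{3k}{k}\frac{(-\xi)^k}{2k+1},
\qquad\text{so}\qquad
\beta_{0,j}(x) = c_{0,j}\,x^{j(\nu-1)+1} = \frac{(-10)^j}{2j+1}\binom{3j}{j} x^{2j+1}
\]
for $\nu = 3$, matching \eqref{alpha 0s}–\eqref{induction step 1}. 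I would also need the companion expansions of $r_0^2$ and of the numerator; since differentiation in $x$ acts on the monomial $x^{2k+1}$ by bringing down explicit rational factors, each of $(r_0')^2$, $r_0 r_0''$, and $r_0^2$ is again a power series in $u$ with monomial coefficients in $x$, and the $x$-powers are forced by Theorem \ref{theorem r structure} (the $u^j$ coefficient of $r_2$ must be proportional to $x^{2j-1}$).

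The core computation is then purely combinatorial: write $r_2 = -5u\,\Phi(x;u)\cdot\Psi(x;u)$ where $\Phi = r_0\big((r_0')^2+2r_0r_0''\big)$ and $\Psi = (1+30ur_0^2)^{-1}$, expand each factor using the binomial coefficients above, and read off the $u^{j}$ coefficient as a finite sum. The geometric factor $\Psi$ contributes $\sum_{m\ge 0}(-30u)^m (r_0^2)^m$; combining with the Lagrange coefficients produces a sum over compositions that, after collecting, can be recognized as a terminating Gauss series. Concretely, I expect the $u^j$ coefficient to collapse — via the Vandermonde/Pfaff–Saalschütz type identities satisfied by products of $\binom{3k}{k}$ against $\binom{3m}{m}$ — into the stated combination
\[
\tfrac{1}{2}(-10)^j x^{2j-1}\Big(10\binom{3j}{j-2}{}_2F_1(3,2-j;3+2j;-2) + \binom{3j}{j-1}{}_2F_1(3,1-j;2+2j;-2)\Big),
\]
with the two terms tracking the two "shifted" ways the derivative weights distribute; the argument $-2$ and the parameter shifts $3+2j$, $2+2j$ are exactly what one gets from the $30u = 3\cdot 10u$ ratio in $\Psi$ relative to the $10u$ appearing in $r_0$. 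The base case $c_{2,1} = x/2$ I would check by hand directly from \eqref{r2 sol} (at order $u^1$, $r_0 \approx x$, $r_0' \approx 1$, $r_0'' \approx 0$, giving $r_2 \approx -5u\cdot x\cdot 1 = -5ux$... so in fact one must be careful: the $u^1$ coefficient requires the $u^0$ and $u^1$ pieces of $r_0$, and $c_{2,1}$ emerges after cancellation — this small verification pins down normalization).

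The main obstacle will be the resummation step: turning the explicit but messy double (or triple) convolution of Lagrange coefficients into the clean ${}_2F_1$ form. This requires identifying the right hypergeometric contiguous/quadratic transformation — the appearance of ${}_2F_1(3,\,\cdot\,;\,\cdot\,;-2)$ rather than a ${}_3F_2$ at unit argument is the non-obvious part and suggests that a quadratic transformation or a generating-function identity for $\binom{3k}{k}$ (Fuss–Catalan numbers) is doing the work behind the scenes. A cleaner route, which I would pursue in parallel, is to bypass the convolution entirely: substitute the ansatz $r_2 = \sum_j c_{2,j} u^j$ with $c_{2,j} = \lambda_j x^{2j-1}$ into the differential-difference relation \eqref{r2 OG} (or rather its $u^j$-component, the ODE $\nu\,u\,\partial_u r_2 + \ldots$ analogue from Section \ref{determining rn section}), obtaining a first-order recurrence for $\lambda_j$, and then verifying that the proposed closed form satisfies that recurrence using the known contiguous relations for ${}_2F_1$. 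This reduces the lemma to a finite hypergeometric identity check, which is routine if somewhat lengthy, and sidesteps the need to discover the transformation from scratch.
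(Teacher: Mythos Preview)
Your approach is genuinely different from the paper's and, as you yourself flag, the resummation step is the weak point. The paper avoids that obstacle entirely by a two-move trick you do not mention. First, it differentiates the string equation $r_0+10ur_0^3=x$ with respect to $x$ to express $r_0'$ and $r_0''$ as rational functions of $r_0$ alone, which collapses \eqref{r2 sol} to the purely algebraic form
\[
r_2 \;=\; r_0\,\frac{(9x-10r_0)(x-r_0)}{2(3x-2r_0)^4}.
\]
Second, it writes $c_{2,j}=\tfrac{1}{2\pi i}\oint r_2\,u^{-j-1}\,\dd u$ and changes variables from $u$ to $z=r_0-x$ using the same implicit relation (so $u = -z\big/\big(10(x+z)^3\big)$ and $\dd u/\dd r_0$ is explicit). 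This produces a single contour integral
\[
c_{2,j} \;=\; \frac{(-10)^j}{2}\cdot\frac{1}{2\pi i}\oint \frac{z(x+z)^{3j}(x+10z)}{(x-2z)^3\,z^{j+1}}\,\dd z,
\]
which is \emph{already} in the Euler integral form of ${}_2F_1$ (DLMF 15.6.2), so the hypergeometric expression drops out directly. No convolution of Fuss--Catalan numbers, no Pfaff--Saalsch\"utz, no contiguous-relation check is needed.

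Your route~--- expand $\Phi$ and $\Psi$ separately and resummate the triple convolution~--- would in principle reach the same answer, but you have not actually produced the identity that does it, and ``I expect it to collapse via Vandermonde/Pfaff--Saalsch\"utz type identities'' is not a proof. The alternative you sketch (verify a first-order recurrence for $\lambda_j$ via contiguous relations) is sound but also heavier than the paper's argument. Incidentally, your direct check at order $u^1$ is right the first time: with $r_0\approx x$, $r_0'\approx 1$, $r_0''\approx 0$ one gets $c_{2,1}=-5x$, which agrees with the value recorded in Theorem~\ref{hexic rec theorem}; the ``$x/2$'' in the lemma statement appears to be a typo in the paper, not a cancellation you are missing.
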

\begin{proof}
    Following the arguments presented in Appendix \ref{0 expansion} we find that
    \begin{eqnarray}
        c_{2,j} &=& \frac{1}{2\pi i}\oint \frac{r_2}{u^{j+1}} \dd u,\\
        &=& \left(-10\right)^j\frac{1}{2\pi i} \oint \frac{r_2}{r_0}\frac{\left(x+z \right)^{3j}}{z^{j+1}} \left( x  -2z\right) \dd z,\label{cj sub1}
    \end{eqnarray}
    where $z = r_0-x$ and the integral is about $z=0$. Taking the $x$ derivative of \eqref{r0 cubic} allows us to write $r_0'$ and $r_0''$ in terms of $r_0$ which then allows us to express equation \eqref{r2 sol} as
    \begin{equation}
        r_2 = r_0\frac{(9x-10r_0)(x-r_0)}{2(3x-2r_0)^4}.
    \end{equation}
    We substitute this expression for $r_2$ into \eqref{cj sub1} and change variables from $r_0$ to $z$ to find
    \begin{equation}
        c_{2,j} = \left(-10\right)^j\frac{1}{2\pi i} \oint \frac{z(x+z)^{3j}(x+10z)}{2(x-2z)^3z^{j+1}} \dd z.\label{cj sub2}
    \end{equation}
    Equation \eqref{cj sub2} can be explicitly evaluated using \cite[Equation 15.6.2]{NIST:DLMF}. This provides an explicit expression for $c_{2,j}$.
\end{proof}
We can repeat the arguments presented in Lemma \ref{explicit r2} to determine the series expansion of $r_4$. The steps are identical but with more algebra involved. We used Mathematica to deal with the increasing algebraic steps required. The results are presented in the following theorem.

\begin{theorem}\label{hexic rec theorem} Consider the system of orthogonal polynomials \eqref{OP} with respect to the weight
\[ \exp\left(-N \left( \frac{z^2}{2} + \frac{u z^6}{6} \right) \right), \]
 and the associated recurrence relation \eqref{OP rec}. The coefficients \( r_0 \), \( r_2 \), and \( r_4 \) in the corresponding topological expansion \eqref{Asymp ga_n^2} are given by:

\begin{equation*}
    r_{0} = \sum_{j=1}^\infty \beta_{0,j} u^j, \qquad \text{and,} \qquad r_{2} = \sum_{j=1}^\infty \beta_{2,j} u^j, \qquad \text{and,} \qquad r_4 = \sum_{j=2}^\infty \beta_{4,j}u^j,
\end{equation*}
where
\begin{equation*}
    \beta_{0,j} = \left(-10\right)^j\frac{(3j)!}{j!(2j+1)!}x^{2j+1},
\end{equation*}
 \begin{eqnarray*}
  \beta_{2,1}  &=& -5x, \\
 \beta_{2,j\geq2}  &=&  \frac{(-10)^j}{2}x^{2j-1}\bigg( 10 {3j \choose j-2} 
    \pFq{2}{1}{3, 2-j}{3+2j}{-2} + {3j \choose j-1} \pFq{2}{1}{3, 1-j}{2+2j}{-2} \bigg),
\end{eqnarray*}
and,
\begin{flalign*}
 \beta_{4,2}  & =  295x, & \\
 \beta_{4,3}  &=  -274300 x^{3}, & \\
 \beta_{4,4}  &= 81777000x^{5}, & \\
 \beta_{4,j\geq5}  & =  \frac{(-10)^j}{20}x^{2j-3}\Bigg[ 59 { 3j \choose j-2}\pFq{2}{1}{8, 2-j}{3+2j}{-2} + 4011{ 3j \choose j-3} \pFq{2}{1}{8, 3-j}{4+2j}{-2}  \\
    &\, + 27528 { 3j \choose j-4} \pFq{2}{1}{8, 4-j}{5+2j}{-2}  + 34268{ 3j \choose j-5} \pFq{2}{1}{8, 5-j}{6+2j}{-2}\Bigg], & \nonumber
\end{flalign*}
\noindent and $\pFq{2}{1}{a, b}{c}{z}$ is Gauss' hypergeometric function \cite[Section 15.2]{NIST:DLMF}. \end{theorem}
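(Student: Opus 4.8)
The plan is to establish the three asserted closed forms by combining the hexic Freud equation \eqref{sextic string} with the asymptotic substitution \eqref{R_np1 expansion}, and then to extract the Taylor coefficients in $u$ via contour integration in the variable $z := r_0 - x$, exactly as in Lemma \ref{explicit r2}. First I would record the $N^0$, $N^{-2}$, and $N^{-4}$ balances of \eqref{sextic string}, which give \eqref{r0 cubic}, \eqref{r2 OG}, and the displayed equation for $r_4$; these are purely mechanical consequences of plugging \eqref{R_np1 expansion} into \eqref{sextic string} and collecting powers of $N$. The formula for $\beta_{0,j}$ is then obtained by the Lagrange–Bürmann / residue computation carried out in Appendix \ref{0 expansion}: writing $\beta_{0,j} = \frac{1}{2\pi\ii}\oint r_0\, u^{-j-1}\,\dd u$ and changing variables from $u$ to $z$ using $u = z/(10(x+z)^3)$ and $r_0 = x+z$, one lands on a rational integrand whose residue is the stated binomial ratio. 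The $\beta_{2,j}$ formula is precisely Lemma \ref{explicit r2}, so nothing new is required there beyond restating it (and checking $\beta_{2,1} = -5x$, which is immediate from \eqref{r2 OG} at lowest order).

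The genuinely new content is the computation of $\beta_{4,j}$. The approach mirrors Lemma \ref{explicit r2}: starting from $\beta_{4,j} = \frac{1}{2\pi\ii}\oint r_4\, u^{-j-1}\,\dd u$, I would change variables to $z$, so that $u^{-j-1}\,\dd u$ contributes the factor $(-10)^j (x+z)^{3j} (x-2z)\, z^{-j-1}\,\dd z$ up to the Jacobian bookkeeping already done for $\beta_{0,j}$. The key preparatory step is to express $r_4$ as an explicit rational function of $r_0$ (equivalently of $z$): differentiating \eqref{r0 cubic} repeatedly in $x$ lets one solve for $r_0', r_0'', r_0^{(3)}, r_0^{(4)}$ as rational functions of $r_0$ with denominator a power of $(3x - 2r_0) = (x - 2z)$, and likewise $r_2$ and its derivatives $r_2', r_2''$ are rational in $r_0$ since $r_2 = r_0(9x-10r_0)(x-r_0)/\big(2(3x-2r_0)^4\big)$. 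Substituting all of these into the displayed $r_4$-equation and simplifying yields $r_4 = z \cdot (\text{polynomial in } x, z)/\big(2(x-2z)^{k}\big)$ for an explicit $k$ (I expect $k$ around $7$), so that $\beta_{4,j}$ becomes the residue at $z=0$ of a rational function of the form $(x+z)^{3j}$ times a fixed rational function of $z$ with a single finite pole at $z = x/2$. Such residues evaluate, via the binomial expansion of $(x+z)^{3j}$ against the partial-fraction expansion of the fixed rational factor at $z=x/2$, to a finite linear combination of terms $\binom{3j}{j-m}\,{}_2F_1(a, m-j; \,\text{something}+2j;\,-2)$; this is the same mechanism as \cite[Eq.~15.6.2]{NIST:DLMF} used in Lemma \ref{explicit r2}. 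Matching the pole order to the upper parameter $8$ and the shifts $m = 2,3,4,5$ to the lower parameters $3+2j,\dots,6+2j$ fixes the structural shape, and the four rational coefficients $59/20,\ 4011/20,\ 27528/20,\ 34268/20$ are then pinned down by comparing with a handful of directly computed values $\beta_{4,2},\beta_{4,3},\beta_{4,4},\beta_{4,5}$ (hence the separately listed low-$j$ cases, which also serve to confirm the formula in the degenerate range where some binomials vanish). The low-$j$ values themselves come from plugging $r_0 = -5x u + O(u^2)$, etc., into \eqref{r2 sol} and the $r_4$-equation and reading off coefficients.

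The main obstacle is the algebraic simplification of $r_4$ into a clean rational function of $z$ with a single controllable pole: the raw $r_4$-expression involves products and derivatives of $r_0$ and $r_2$ up to fourth order, and naively substituting produces an enormous rational expression whose numerator must be shown to carry the factor $z$ and whose denominator must collapse to a pure power of $(x-2z)$ with no spurious factors. Verifying this collapse — and identifying the exact pole order — is the delicate bookkeeping step; in practice it is handled with a computer algebra system, as the statement acknowledges. A secondary, milder obstacle is confirming that the residue extraction genuinely produces only the four ${}_2F_1$ terms listed (i.e. that higher-order terms in the partial-fraction expansion at $z = x/2$ either vanish or recombine), and that the convention ${}_2F_1(a,b;c;z)$ with a negative-integer numerator parameter correctly truncates so that the closed form remains valid for all $j \geq 5$. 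Once the rational form of $r_4$ is in hand, everything else is a routine application of the machinery already developed for $\beta_{0,j}$ and $\beta_{2,j}$.
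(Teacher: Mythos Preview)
Your proposal is correct and follows essentially the same route as the paper: derive the $N^0$, $N^{-2}$, $N^{-4}$ balances of the hexic Freud equation, express $r_2$ and $r_4$ as rational functions of $r_0$ (equivalently of $z=r_0-x$) by repeatedly differentiating \eqref{r0 cubic}, and extract the Taylor coefficients via the contour-integral/change-of-variables argument of Lemma~\ref{explicit r2}, evaluating the resulting residues with \cite[Eq.~15.6.2]{NIST:DLMF}. The paper is terser on the $r_4$ step, simply stating that the argument is identical to Lemma~\ref{explicit r2} with more algebra handled by Mathematica; your anticipated denominator exponent is slightly low (the integrand carries $(x-2z)^{-8}$, matching the upper parameter $8$), but the mechanism you describe is exactly right.
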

\begin{remark} \normalfont
    Note that the integral representation in equation \eqref{cj sub2} is how one arrives at the results found in \cite{ELTMe}, where their results hold in a more general setting. Using \cite[Equation 15.6.2]{NIST:DLMF} one can explicitly compute integrals of this form (this is also how one derives the counts of graphs from the results presented in \cite[Section 5]{Ercolani-McLaughlin-Pierce}).
\end{remark}
\subsection{First order derivative of the free energy}
One can also derive a first-order differential equation for $F_{nN}(\sigma)$ introduced in \eqref{sigma free energy}. However, its form depends on $\nu$, and in this section we will detail the process for $\nu=3$. The first-order differential equation can be useful for deriving explicit expressions of $F_{nN}(\sigma)$ as solving an inhomogeneous first-order differential equation is considerably easier than solving an inhomogeneous second-order differential equation \eqref{the second f}. On the other hand, the first-order differential equation leads to a complicated expression which depends on $\nu$. Furthermore, if one is only concerned with representing $\mathcal{F}_{nN}(u)$ as a power series in $u$, the second-order differential equation is a much easier expression to use.
\begin{lemma}
The hexic $\sigma$-free energy satisfies the following first order differential equation in $\sigma$,
\begin{equation}\label{the first f}
     \frac{\ddd F_{nN}}{\ddd \sigma}
    = -\frac{N^2}{2n^2}\Big(\frac{n}{N}R_n+R_{n+1}R_nR_{n-1}(R_{n+2}+R_{n+1}+R_{n}+R_{n-1}+R_{n-2})\Big).
\end{equation} 
\end{lemma}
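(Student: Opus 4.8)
The plan is to run the same argument used for the second-order equation~\eqref{the second f} in Lemma~\ref{sigma lemma}, except that the single sum of recurrence coefficients that now appears cannot be collapsed by elementary shifts alone and must instead be telescoped using the hexic Freud equation~\eqref{sextic string}. First I would differentiate Heine's formula~\eqref{F def} once in $\sigma$, using the first-order identity~\eqref{d ln hn} (which holds for every even-degree potential, in particular the hexic one), to get
\[
\frac{\ddd F_{nN}}{\ddd \sigma}\;=\;-\frac{N}{2n^2}\sum_{k=0}^{n-1}\bigl(R_{k+1}+R_k\bigr)\;=:\;-\frac{N}{2n^2}\,S_n .
\]
Since $R_0=0$ by~\eqref{three term}, one has $S_{n+1}-S_n=R_{n+1}+R_n$ and $S_1=R_1$. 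Comparing with the target~\eqref{the first f}, the whole statement is equivalent to the algebraic identity
\[
S_n\;=\;nR_n+NG_n,\qquad G_n:=R_{n+1}R_nR_{n-1}\bigl(R_{n+2}+R_{n+1}+R_n+R_{n-1}+R_{n-2}\bigr),
\]
where the boundary symbols $R_{-1},R_{-2}$ are immaterial since they only ever occur multiplied by $R_0=0$.

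Second, I would rewrite the hexic Freud equation in the $\sigma$-variables. Substituting $\mathcal{R}_m=\sigma R_m$ and $u=\sigma^{-3}$ (from~\eqref{uv relation} and~\eqref{r relation}) into~\eqref{sextic string}, and using $x=n/N$, turns it into
\[
\frac{n}{N}=\sigma R_n+R_n\Phi_n,\qquad
\Phi_n:=R_{n+2}R_{n+1}+R_{n+1}^2+2R_nR_{n+1}+R_n^2+2R_nR_{n-1}+R_{n+1}R_{n-1}+R_{n-1}^2+R_{n-1}R_{n-2}.
\]
From this, the $\sigma$-dependent terms cancel in
\[
(n+1)R_n-nR_{n+1}=N\Bigl(R_n\tfrac{n+1}{N}-R_{n+1}\tfrac{n}{N}\Bigr)=NR_nR_{n+1}\bigl(\Phi_{n+1}-\Phi_n\bigr).
\]
The one genuinely computational step is then the purely polynomial identity
\[
G_{n+1}-G_n=R_nR_{n+1}\bigl(\Phi_{n+1}-\Phi_n\bigr),
\]
which I would prove by direct expansion: the difference reduces to $\Phi_{n+1}-\Phi_n=R_{n+2}\bigl(R_{n+3}+R_{n+2}+R_{n+1}+R_n\bigr)-R_{n-1}\bigl(R_n+R_{n+1}+R_{n-1}+R_{n-2}\bigr)$, and multiplying by $R_nR_{n+1}$ reproduces $G_{n+1}-G_n$ once the crossed quartic term $R_nR_{n+1}R_{n-1}R_{n+2}$ cancels. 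Combining the last two displays gives $N\bigl(G_{n+1}-G_n\bigr)=(n+1)R_n-nR_{n+1}$.

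Finally I would prove $S_n=nR_n+NG_n$ by induction on $n$. The base case $n=1$ is $S_1=R_1=1\cdot R_1+N\cdot 0=R_1+NG_1$, since $G_1=0$ (it carries the factor $R_0=0$). For the inductive step, assuming $S_n=nR_n+NG_n$,
\[
S_{n+1}=S_n+R_{n+1}+R_n=(n+1)R_n+R_{n+1}+NG_n=(n+1)R_{n+1}+NG_n+\bigl[(n+1)R_n-nR_{n+1}\bigr]=(n+1)R_{n+1}+NG_{n+1}.
\]
Substituting $S_n=nR_n+NG_n$ back into the first display yields exactly~\eqref{the first f}. I expect the only real obstacle to be the polynomial identity $G_{n+1}-G_n=R_nR_{n+1}(\Phi_{n+1}-\Phi_n)$: it is conceptually trivial but requires carefully tracking about a dozen quadratic and cubic monomials, so it is where a sign or index slip is most likely; everything else is short bookkeeping. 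Alternatively one can bypass the induction entirely by summing $N(G_{k+1}-G_k)=(k+1)R_k-kR_{k+1}$ over $k=0,\dots,n-1$: the right side telescopes to $2\sum_{k=1}^{n-1}R_k-(n-1)R_n$, hence $NG_n=S_n-nR_n$.
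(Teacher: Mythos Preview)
Your proof is correct and takes a genuinely different route from the paper's. The paper works analytically: starting from the same derivative $\tfrac{\ddd F_{nN}}{\ddd\sigma}=\tfrac{1}{n^2}\sum_{k=0}^{n-1}\tfrac{\partial\ln h_k}{\partial\sigma}$, it rewrites the resulting sum via the Christoffel--Darboux formula as a single integral involving $P_n'P_{n-1}$, then uses orthogonality, an auxiliary identity for the subleading coefficient in $P_n'=nP_{n-1}+A_nP_{n-3}+\cdots$, and an integration by parts that brings in $V'(\zeta)=\zeta^5+\sigma\zeta$; the hexic structure enters only at the very end when $\zeta^5P_{n+1}$ is expanded by the three-term recurrence. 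Your argument instead stays purely discrete: you reduce the claim to the algebraic identity $S_n=nR_n+NG_n$ and prove it by telescoping the $\sigma$-form of the Freud equation, the only computation being the cancellation of the cross term $R_nR_{n+1}R_{n-1}R_{n+2}$ in $G_{n+1}-G_n=R_nR_{n+1}(\Phi_{n+1}-\Phi_n)$. Your route is shorter and more elementary---no Christoffel--Darboux, no integration by parts, no derivative-of-$P_n$ lemma---and it makes transparent that the first-order Toda equation is essentially a telescoped Freud equation; the paper's approach, by contrast, does not presuppose the Freud equation and shows more directly how the orthogonality structure produces the specific quartic $G_n$.
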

\begin{proof}
    Differentiating equation \eqref{F def} with respect to $\sigma$ we find
\begin{eqnarray*}
    \frac{\ddd F_{nN}}{\ddd \sigma} &=& \frac{1}{n^2} \left(\sum^{n-1}_{k=0}  \frac{1}{h_k}\frac{\partial h_k}{\partial \sigma} \right)=  -\frac{N}{2n^2} \left(\sum^{n-1}_{k=0}  \frac{1}{h_k}\int_\Gamma \zeta^2P_k(\zeta)^2e^{-NV(\zeta )}\dd \zeta  \right)\\
    &=& -\frac{N}{2n^2} \int_\Gamma \zeta ^2\left(\sum_{k=0}^{n-1}\frac{P_k(\zeta )^2}{h_k}\right)e^{-NV(\zeta )} \dd \zeta  \\
    &=& -\frac{N}{2n^2} \int_\Gamma \zeta ^2\left(\frac{P_n(\zeta )'P_{n-1}(\zeta ) - P_{n-1}(\zeta )'P_{n}(\zeta )}{h_{n-1}}\right)e^{-NV(\zeta )}\dd \zeta  ,
\end{eqnarray*}
where we have used the Christoffel-Darboux formula to arrive at the last equality and $f(\zeta )'$ is shorthand notation for $\frac{d f(\zeta )}{d\zeta }$. Using the orthogonality condition of the polynomials and repeated application of equation \eqref{three term} we find that
\begin{equation}\label{to be cont}
\begin{aligned}
\frac{\dd F_{nN}}{\dd \sigma}
&= \frac{N(n-1)}{2n^{2}}\,R_n
 - \frac{N}{2n^{2}} \int_{\Gamma} \frac{\zeta^{2}}{h_{n-1}}\,P_n' P_{n-1}\,e^{-NV}\,\dd\zeta \\
&= \frac{N(n-1)}{2n^{2}}\,R_n
  - \frac{N}{2n^{2} h_{n-1}}
   \int_{\Gamma} P_n'\!\bigg(P_{n+1} + (R_n+R_{n-1})P_{n-1} + R_{n-1}R_{n-2}P_{n-3}\bigg)
   e^{-NV}\,\dd\zeta \\
&= \frac{N(n-1)}{2n^{2}}\,R_n
 - \frac{N}{2n}(R_n+R_{n-1})
 - \frac{N}{2n^{2} h_{n-1}}\,R_{n-1}R_{n-2}
   \int_{\Gamma} P_n' P_{n-3}\,e^{-NV}\,\dd\zeta \\
&= -\frac{N}{2n^{2}}(R_n+nR_{n-1})
 - \frac{N}{2n^{2} h_{n-3}} \int_{\Gamma} P_n' P_{n-3}\,e^{-NV}\,\dd\zeta .
\end{aligned}
\end{equation}
We now take a brief detour to prove an identity required to proceed.  First observe that the derivative of $P_n(\zeta )$ with respect to $\zeta$ can be written as
\begin{equation}
    P_n' = nP_{n-1} + A_nP_{n-3} + \mathcal{O}(\zeta ^{n-5}),\label{diff identity}
\end{equation}
whenever the orthogonality weight is an even function. Let us differentiate both sides of equation \eqref{three term} with respect to $\zeta $ and apply \eqref{diff identity} and \eqref{three term} so that both sides are written only in terms of the basis of orthogonal polynomials. Comparing the coefficient of $P_{n-2}$ gives the identity
\begin{equation}
    nR_{n-1} + A_n = A_{n+1} + (n-1)R_n.\label{needed ident}
\end{equation}
Applying equation \eqref{needed ident} to  \eqref{to be cont} we find
\begin{eqnarray*}
    \frac{\ddd F_{nN}(\sigma)}{\ddd \sigma} &=& -\frac{N}{2n^2}\Big( R_n+nR_{n-1} + A_n \Big), \\
    &=& -\frac{N}{2n^2}\Big( nR_{n} + A_{n+1} \Big), \\
    &=& -\frac{N}{2n^2}\Big( nR_{n} + \frac{1}{h_{n-2}}\int_\Gamma P_{n+1}'P_{n-2} e^{-NV} \dd\zeta \Big).
\end{eqnarray*}
Applying integration by parts and using orthogonality we obtain
\begin{eqnarray*}
    \frac{\ddd F_{nN}(\sigma)}{\ddd \sigma} &=&  -\frac{N}{2n^2}\left(nR_n+\frac{N}{h_{n-2}}\int_\Gamma P_{n+1}P_{n-2}V' e^{-NV} \dd\zeta \right),\\
     &=& -\frac{N}{2n^2}\left(nR_n+\frac{N}{h_{n-2}}\int_\Gamma P_{n+1}P_{n-2}\zeta ^5 e^{-NV} \dd\zeta \right).
\end{eqnarray*}
Using equation \eqref{three term} we can express $\zeta ^5P_{n+1}$ in terms of the basis of orthogonal polynomials $\{P_{k}(\zeta )\}_{n-4}^{n+6}$. This leads us to equation \eqref{the first f}.
\end{proof}
\begin{remark}
\normalfont    The derivation detailed above holds for general $\nu$ up to \eqref{to be cont}. This simplifies the derivation of the first order Toda equation in the quartic case presented in \cite{BGM}.
\end{remark}

Equation \eqref{the first f} can be written in terms of $u$ using equations \eqref{uv relation}, \eqref{free relation}, \eqref{r relation} and \eqref{the second f} as,
\begin{equation}\nonumber
    \frac{\ddd\mathcal{F}_{nN}}{\ddd u} = \frac{\mathcal{R}_n}{6ux^2}\Big(x+u\mathcal{R}_{n+1}\mathcal{R}_{n-1}(\mathcal{R}_{n+2}+\mathcal{R}_{n+1}+\mathcal{R}_{n}+\mathcal{R}_{n-1}+\mathcal{R}_{n-2})\Big) - \frac{1}{6u}.
\end{equation}
In Theorem \ref{hexic free theorem}  we provide an explicit formula for the first three coefficients $N^{0}$, $N^{-2}$ and $N^{-4}$ of the free energy for hexic weights. The derivation follows from the same arguments as were used in the proof of Lemma \ref{explicit r2}.
\begin{theorem}\label{hexic free theorem}
Consider the eigenvalue partition function \eqref{u free energy} with respect to the weight
\[ \exp\left(-N \left( \frac{z^2}{2} + \frac{u z^6}{6} \right) \right), \]
 and the associated free energy  $\mathcal{F}_{nN}$ given by \eqref{free energy}. The coefficients \( f_0 \), \( f_2 \), and \( f_4 \) in the corresponding topological expansion \eqref{top exp free energy} are given by:
\begin{equation*}
    f_0 = \sum_{j=1}^\infty \alpha_{0,j}u^j, \qquad and, \qquad f_2 = \sum_{j=1}^\infty \alpha_{2,j}u^j, \qquad and, \qquad f_4 = \sum_{j=1}^\infty \alpha_{4,j}u^j,
\end{equation*}
where, {\small
\begin{equation*}
    \alpha_{0,j} = (-10)^j\frac{{ 3j+1\choose j}-2{3j+1 \choose j-1}}{6j(3j+1)}x^{2j},
\qquad
    \alpha_{2,j} = (-10)^j\frac{2{2+3j \choose j-1}\pFq{2}{1}{3, 1-j}{4+2j}{-2}}{3j(3j+1)}x^{2j-2},
\end{equation*}}
and,
{\small \begin{flalign*}
 \al_{4,2}  &= \frac{265}{4},& \\
 \al_{4,3}  &= -\frac{40025}{3}x^2,& \\
 \al_{4,4}  &= 1736625x^4, & \\
 \al_{4,5}  &= -187387500 x^6,& \\
 \al_{4,j}  &=  \frac{(-10)^jx^{2j-4}}{40j(3j+1)} \Bigg[  371 {3j\choose j-2}\pFq{2}{1}{8, 2-j}{3+2j}{-2}  + 6735 {3j\choose j-3}\pFq{2}{1}{8, 3-j}{4+2j}{-2}  \\& + 23496 {3j\choose j-4}\pFq{2}{1}{8, 4-j}{5+2j}{-2}  + 25004 {3j\choose j-5}\pFq{2}{1}{8, 5-j}{6+2j}{-2} \nonumber \\& +7872{3j\choose j-6}\pFq{2}{1}{8, 6-j}{7+2j}{-2} \Bigg],& \nonumber
\end{flalign*}} for all $j\geq6$.
\end{theorem}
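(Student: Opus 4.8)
The plan is to apply the contour-integral method used in the proof of Lemma~\ref{explicit r2} (and of Theorem~\ref{hexic rec theorem}) to the first-order differential equation for the free energy displayed just above the statement,
\[
\frac{\ddd\mathcal{F}_{nN}}{\ddd u} = \frac{\mathcal{R}_n}{6ux^2}\Big(x+u\mathcal{R}_{n+1}\mathcal{R}_{n-1}(\mathcal{R}_{n+2}+\mathcal{R}_{n+1}+\mathcal{R}_{n}+\mathcal{R}_{n-1}+\mathcal{R}_{n-2})\Big) - \frac{1}{6u}.
\]
First I would substitute the topological expansion $\mathcal{F}_{nN}=\sum_{g\ge 0}f_{2g}/N^{2g}$ together with the Taylor-shift expansion \eqref{R_np1 expansion} of each $\mathcal{R}_{n+k}$, whose coefficients $r_0,r_2,r_4$ are now available in closed form from Theorem~\ref{hexic rec theorem}, and equate coefficients of $N^{0}$, $N^{-2}$ and $N^{-4}$. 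The $N^{0}$ relation expresses $\ddd f_0/\ddd u$ through $r_0$ alone; the $N^{-2}$ relation expresses $\ddd f_2/\ddd u$ through $r_0$, $r_2$ and $x$-derivatives of $r_0$; the $N^{-4}$ relation expresses $\ddd f_4/\ddd u$ through $r_0,r_2,r_4$ and their $x$-derivatives. Using the order-zero string equation $r_0+10ur_0^3=x$ and its repeated differentiation in $x$ — exactly as \eqref{r2 sol} and the displayed formula for $r_4$ were obtained — every quantity on the right-hand side becomes a rational function of $r_0$ and $x$, while $u=(x-r_0)/(10r_0^3)$ is itself rational in $r_0$.

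Next I would extract the Taylor coefficients $\alpha_{2g,j}$ as in Lemma~\ref{explicit r2}. The normalisation $\mathcal{F}_{nN}(u=0)=0$ (together with analyticity of $f_{2g}$ near $u=0$) forces $f_{2g}(x;0)=0$, so for $j\ge 1$ an integration by parts gives
\[
\alpha_{2g,j}=\frac{1}{2\pi i}\oint \frac{f_{2g}}{u^{\,j+1}}\,\dd u=\frac{1}{2\pi i\,j}\oint u^{-j}\,\frac{\ddd f_{2g}}{\ddd u}\,\dd u,
\]
the contour being a small loop around $u=0$. Changing variables from $u$ to $z:=r_0-x$, so that $u=-z/\big(10(x+z)^3\big)$ and $\dd u$ is an explicit rational $1$-form in $z$, turns each $\alpha_{2g,j}$ into the contour integral around $z=0$ of a rational function of $z$, of precisely the shape appearing in \eqref{cj sub2}. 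Evaluating these by \cite[Eq.~15.6.2]{NIST:DLMF} produces the ${}_2F_1$'s with argument $-2$ and the binomial prefactors $\binom{3j}{j-k}$, $\binom{3j+1}{j-k}$ in the statement; for the finitely many small $j$ at which these prefactors are degenerate (lower index out of range), the residue at $z=0$ is computed directly, which is the source of the separately listed values $\alpha_{4,2},\dots,\alpha_{4,5}$, in exact analogy with the separately listed $\beta_{2,1}$ and $\beta_{4,2},\beta_{4,3},\beta_{4,4}$ in Theorem~\ref{hexic rec theorem}. The vanishing $\alpha_{4,0}=\alpha_{4,1}=0$ is accounted for by Theorem~\ref{theorem f structure} together with the elementary fact that a single $6$-valent vertex cannot force genus $2$.

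The steps for $f_0$ are short and for $f_2$ routine; the genuinely laborious part — and the main obstacle — is the $N^{-4}$ matching that produces $\ddd f_4/\ddd u$. There the product $\mathcal{R}_{n+1}\mathcal{R}_{n-1}(\mathcal{R}_{n+2}+\mathcal{R}_{n+1}+\mathcal{R}_{n}+\mathcal{R}_{n-1}+\mathcal{R}_{n-2})$ must be expanded through order $N^{-4}$ via \eqref{R_np1 expansion}, generating many cross-terms in which up to four $x$-derivatives are distributed over the five factors with shift-dependent combinatorial weights; one must then reduce $r_2$, $r_4$, and $r_0',r_0'',r_0^{(3)},r_0^{(4)},r_2''$ to rational functions of $r_0$ via the string equation before the change of variables and the residue / ${}_2F_1$ evaluation can be applied. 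This bookkeeping is most safely carried out with computer algebra (as the authors note); conceptually it is identical to Lemma~\ref{explicit r2}. Finally I would cross-check the resulting formulae against the numerical tables of \cite{pierce2006,Dubrovin-Yang} and against Theorem~\ref{thmEM02} via \eqref{numbers and free energy}, as a guard against arithmetic slips.
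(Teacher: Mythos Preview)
Your proposal is correct and follows essentially the same route as the paper: the authors place Theorem~\ref{hexic free theorem} immediately after deriving the first-order equation for $\ddd\mathcal{F}_{nN}/\ddd u$ and state that ``the derivation follows from the same arguments as were used in the proof of Lemma~\ref{explicit r2}'', i.e.\ exactly the match-coefficients-then-contour-integrate procedure you outline, with computer algebra handling the $N^{-4}$ bookkeeping. Your integration-by-parts step $\alpha_{2g,j}=\tfrac{1}{j}\cdot[u^{j-1}]\,\ddd f_{2g}/\ddd u$ is the natural way to pass from the first-order equation to the Taylor coefficients and is implicit in the paper's remark that the first-order equation is preferable here to the second-order one~\eqref{second F u}.
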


\subsection{Explicit formulae for \( \mathscr{N}_{g}(6,j) \) and \( \mathcal{N}_{g}(6,j) \) as functions of $j$ (fixed $g$) }\label{new results: Hexic Case}

Theorems \ref{thm hexic counts} and \ref{thm hexic 2-legged counts} provide combinatorial results for $6$-valent graphs embedded on Riemann surfaces and are essentially corollaries of Theorems \ref{hexic free theorem} and \ref{hexic rec theorem} respectively in view of the formulae \eqref{numbers and free energy} and \eqref{2legged counts}.

\begin{theorem}\label{thm hexic counts}
	Let $\mathscr{N}_g(6,j)$ be the number of connected labeled 6-valent graphs with $j$ vertices which are realizable on a closed Riemann surface of minimal genus $g$ , (as an example recall the graphs (a) and (b) in Figure \ref{fig:intro}). We have 

{\small \begin{flalign*}
    \mathscr{N}_0(6,j) &= 60^{j} \cdot \frac{(3j-1)!}{(2j+2)!}, \qquad j \in \N, & \\
    \mathscr{N}_1(6,j) &= \frac{40}{3j+1}(j-1)!(60)^{j-1}  {2+3j \choose j-1}\pFq{2}{1}{3, 1-j}{4+2j}{-2} , \qquad j \in \N, & \\
    \mathscr{N}_2(6,j)   &=  \frac{3}{2(3j+1)} (j-1)!(60)^{j-1}  \Bigg[  371 {3j\choose j-2}\pFq{2}{1}{8, 2-j}{3+2j}{-2} \\ &
    + 6735 {3j\choose j-3}\pFq{2}{1}{8, 3-j}{4+2j}{-2} + 23496 {3j\choose j-4}\pFq{2}{1}{8, 4-j}{5+2j}{-2} \\
     & + 25004 {3j\choose j-5}\pFq{2}{1}{8, 5-j}{6+2j}{-2} + 7872{3j\choose j-6}\pFq{2}{1}{8, 6-j}{7+2j}{-2} \Bigg], \nonumber &
\end{flalign*}} for all $j \geq 6$. For $g=2$ and $1\leq j \leq 5$ the counts are given by: $\mathscr{N}_2(6,1)=0$, $\mathscr{N}_2(6,2)=4770$, $\mathscr{N}_2(6,3) = 17290800$, $\mathscr{N}_2(6,4) = 54015984\times10^3$, and $\mathscr{N}_2(6,5)   = 174855024 \times 10^6$.
\end{theorem}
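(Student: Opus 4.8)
The plan is to obtain Theorem~\ref{thm hexic counts} as a direct corollary of Theorem~\ref{hexic free theorem} via the identity \eqref{numbers and free energy}, namely $\mathscr{N}_g(6,j) = (-6)^j \frac{\partial^j}{\partial u^j} f_{2g}(1,u)\big|_{u=0}$, which here reads $\mathscr{N}_g(6,j) = (-6)^j j!\, \alpha_{2g,j}(1)$ since $f_{2g}(x;u) = \sum_j \alpha_{2g,j}(x) u^j$. First I would record the $g=0$ case: substituting $\alpha_{0,j}(x) = (-10)^j \frac{\binom{3j+1}{j} - 2\binom{3j+1}{j-1}}{6j(3j+1)} x^{2j}$ from Theorem~\ref{hexic free theorem} and multiplying by $(-6)^j j!$ gives $\mathscr{N}_0(6,j) = 60^j \cdot \frac{j!\,(\binom{3j+1}{j} - 2\binom{3j+1}{j-1})}{6j(3j+1)}$; the remaining task is the routine binomial simplification showing this equals $60^j \frac{(3j-1)!}{(2j+2)!}$ — expand the binomials into factorials, combine $\binom{3j+1}{j} - 2\binom{3j+1}{j-1}$ over a common denominator, and cancel. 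The $g=1$ and $g=2$ formulae follow the same way: for $g=1$, $\mathscr{N}_1(6,j) = (-6)^j j! \,\alpha_{2,j}(1) = (-6)^j j! \cdot (-10)^j \frac{2\binom{3j+2}{j-1}}{3j(3j+1)} {}_2F_1(3,1-j;4+2j;-2)$, and rewriting $60^j j!/(3j(3j+1)) \cdot 2\binom{3j+2}{j-1}$ as $\frac{40}{3j+1}(j-1)!\,60^{j-1}\binom{3j+2}{j-1}$ is a matter of pulling out constants. The $g=2$ general-$j$ formula (valid for $j\geq 6$) is the analogous rearrangement of the five-term expression for $\alpha_{4,j}(1)$ in Theorem~\ref{hexic free theorem}, where the constant $\tfrac{3}{2(3j+1)}(j-1)!60^{j-1}$ comes from $(-6)^j j! \cdot \tfrac{(-10)^j}{40 j(3j+1)}$ after writing $j! = j\cdot(j-1)!$ and $60^j = 60\cdot 60^{j-1}$.

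For the small-$j$ values of $\mathscr{N}_2(6,j)$ with $1\leq j\leq 5$, the closed form for $j\geq 6$ does not apply, so I would instead read off $\alpha_{4,j}(1)$ directly from the explicitly listed values $\alpha_{4,2} = 265/4$, $\alpha_{4,3} = -40025/3$ (at $x=1$), $\alpha_{4,4} = 1736625$, $\alpha_{4,5} = -187387500$, and then compute $\mathscr{N}_2(6,j) = (-6)^j j!\,\alpha_{4,j}(1)$: for $j=2$, $(-6)^2 \cdot 2 \cdot \tfrac{265}{4} = 36\cdot 2 \cdot 66.25 = 4770$; for $j=3$, $(-6)^3 \cdot 6 \cdot(-\tfrac{40025}{3}) = (-216)\cdot 6 \cdot (-13341.67) = 17290800$; similarly $j=4$ gives $1296 \cdot 24 \cdot 1736625 = 54015984\times 10^3$ and $j=5$ gives $(-7776)\cdot 120 \cdot (-187387500) = 174855024\times 10^6$. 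The claim $\mathscr{N}_2(6,1) = 0$ follows from $\alpha_{4,1}(1) = 0$, which I would verify either from the structural Theorem~\ref{free energy theorem} (the constraint $\widetilde{\mathscr{D}} = j(\nu-1) - 2g = 3\cdot 1 - 4 = -1 < -2$ fails, so one must check directly) or, more robustly, from the combinatorial fact that every labeled $6$-valent graph on one vertex is realizable on the torus, hence contributes nothing to genus $2$; this is consistent with the analogous vanishing $\mathscr{N}_2(4,1)=0$ discussed after \eqref{N j+1 2}.

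The only genuine obstacle is bookkeeping: verifying that the two representations of $\alpha_{2g,j}$ — the closed-form hypergeometric one and the explicit small-$j$ numerics — are mutually consistent at the boundary, and that the ${}_2F_1$ evaluations with argument $-2$ at negative-integer upper parameters terminate correctly (so the $\binom{3j}{j-k}$ prefactors vanish when $j<k$, making the $j=2,\dots,5$ truncations of the $j\geq 6$ formula agree with the listed values — a useful internal check). None of this requires new ideas beyond Theorems~\ref{hexic free theorem} and~\ref{hexic rec theorem} and the dictionary \eqref{numbers and free energy}; the proof is assembled by substitution and elementary factorial/binomial algebra, which is why the paper (correctly) calls Theorem~\ref{thm hexic counts} "essentially a corollary." I would present the $g=0,1$ simplifications in full, state the $g=2$ rearrangement with the key constant identified, and tabulate the five small-$j$ arithmetic evaluations.
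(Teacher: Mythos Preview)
Your proposal is correct and follows exactly the route the paper indicates: Theorem~\ref{thm hexic counts} is stated as ``essentially a corollary of Theorem~\ref{hexic free theorem}'' via \eqref{numbers and free energy}, and you carry this out by substituting the explicit $\alpha_{2g,j}(1)$ into $\mathscr{N}_g(6,j)=(-6)^j j!\,\alpha_{2g,j}(1)$ and simplifying. One small slip: for $j=1$, $\nu=3$, $g=2$ the exponent is $\widetilde{\mathscr{D}}=1\cdot(3-1)-4=-2$, not $-1$; this still does not trigger the vanishing clause $\widetilde{\mathscr{D}}<-2$, so your fallback to the combinatorial/Euler-characteristic argument (or to $S_{2,1}(3)=0$ from Theorem~\ref{thm combinatorics}) is the right move.
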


\begin{remark} \normalfont 
Note that the formulae for $\mathscr{N}_0(6,j)$ and $\mathscr{N}_1(6,j)$ agree with \eqref{numbers sphere} and \eqref{eq:e1_counts} for $\nu=3$. The formula for $\mathscr{N}_2(6,j)$ has not appeared before in the literature, but is simply an evaluation of the much more genaral formula \eqref{eq:N-ELT-hypergeom} at $\nu=3$ with the explicit expressions for $b^{(2,\nu)}_{\ell}$, $\ell=0,1,2,3$, provided in Theorem \ref{thm:main-N}.
\end{remark}
\begin{theorem}\label{thm hexic 2-legged counts}
	Let $\mathcal{N}_g(6,j)$ be the number of 2-legged connected labeled 6-valent graphs with $j$ vertices which are realizable on a closed Riemann surface of minimal genus $g$, (as an example recall the graphs (c) and (d) in Figure \ref{fig:intro}). We have
    {\small \begin{flalign*}
\mathcal{N}_0(6,j) &= \left(60\right)^j\frac{(3j)!}{(2j+1)!}, \qquad j \in \N, & \\
    \mathcal{N}_1(6,j) &= \frac{j!(60)^j}{2}\left[ 10 {3j \choose j-2} 
    \pFq{2}{1}{3, 2-j}{3+2j}{-2}\,+{3j \choose j-1} \pFq{2}{1}{3, 1-j}{2+2j}{-2} \right], & \\
    \mathcal{N}_2(6,j)   &=  \frac{j!(60)^j}{20}\Bigg[ 59 { 3j \choose j-2}\pFq{2}{1}{8, 2-j}{3+2j}{-2} + 4011{ 3j \choose j-3} \pFq{2}{1}{8, 3-j}{4+2j}{-2}  \\ &
      + 34268 { 3j \choose j-5} \pFq{2}{1}{8, 4-j}{5+2j}{-2}+27528 { 3j \choose j-4} \pFq{2}{1}{8, 5-j}{6+2j}{-2}\Bigg], \nonumber          
    \end{flalign*}}

\noindent where the expression for $\mathcal{N}_1(6,j)$ holds for all $j \geq 2$ and the expression for $\mathcal{N}_2(6,j)$ holds for all $j \geq 5$. For $g=1$ and $ j = 1$ we have $\mathcal{N}_1(6,1)=30$\footnote{See Figure \ref{fig2}.}, while for $g=2$ and $1\leq j \leq 4$ the counts are given by: $\mathcal{N}_2(6,1)=0$, $\mathcal{N}_2(6,2)=21240$, $\mathcal{N}_2(6,3) = 355492800$, and $\mathcal{N}_2(6,4) = 2543591808\times10^3$.
\end{theorem}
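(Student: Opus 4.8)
The plan is to deduce Theorem~\ref{thm hexic 2-legged counts} as an immediate corollary of Theorem~\ref{hexic rec theorem}, using the identity \eqref{2legged counts}. Specializing \eqref{2legged counts} to $\nu=3$ (so that $-2\nu=-6$) gives
\[
\mathcal{N}_g(6,j) = (-6)^j\,\frac{\partial^j}{\partial u^j}\,r_{2g}(1,u)\Big|_{u=0}.
\]
Since $r_{2g}(x;u)$ is analytic near $u=0$ (by \cite{Ercolani-McLaughlin-Pierce}) it has the convergent expansion $r_{2g}(x;u)=\sum_{k\geq 0}\beta_{2g,k}(x)u^k$, so the $j$-th $u$-derivative at $u=0$ extracts exactly $j!\,\beta_{2g,j}(x)$, whence $\mathcal{N}_g(6,j)=(-6)^j\,j!\,\beta_{2g,j}(1)$. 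By Theorem~\ref{theorem r structure} each $\beta_{2g,j}(x)$ is a monomial in $x$, so evaluation at $x=1$ simply removes the $x$-power; its appearance in the formulae of Theorem~\ref{hexic rec theorem} is a consistency check rather than an obstruction.

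I would then carry out the substitution genus by genus. For $g=0$, inserting $\beta_{0,j}(1)=(-10)^j(3j)!/\big(j!(2j+1)!\big)$ and using $(-6)(-10)=60$, the factorials cancel and one obtains $\mathcal{N}_0(6,j)=60^j\,(3j)!/(2j+1)!$ valid for all $j\in\N$. For $g=1$ and $j\geq 2$, feeding the closed form for $\beta_{2,j}$ into the relation above turns the prefactor $(-6)^j j!$ together with the $\tfrac12(-10)^j$ of Theorem~\ref{hexic rec theorem} into $\tfrac12\,j!\,60^j$, which yields the stated expression for $\mathcal{N}_1(6,j)$; the isolated low-order value $\beta_{2,1}=-5x$ gives the exceptional count $\mathcal{N}_1(6,1)=-6\cdot(-5)=30$. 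For $g=2$, the same one-line substitution applied to $\beta_{4,2}$, $\beta_{4,3}$, $\beta_{4,4}$ produces the explicit integers $\mathcal{N}_2(6,2)=36\cdot 2\cdot 295=21240$, $\mathcal{N}_2(6,3)=1296\cdot 274300=355492800$, $\mathcal{N}_2(6,4)=31104\cdot 81777000=2543591808\times 10^3$, while applying it to $\beta_{4,j\geq 5}$ gives the hypergeometric formula; and $\mathcal{N}_2(6,1)=0$ follows from $\beta_{4,1}=0$ (the expansion of $r_4$ starts at $u^2$).

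Essentially all the substance of the argument sits upstream, inside Theorem~\ref{hexic rec theorem}: rewriting $r_2$ and $r_4$ in closed form from the hexic Freud/string equation \eqref{sextic string}, converting their Taylor coefficients into contour integrals of the form \eqref{cj sub2} via the residue (Lagrange inversion) argument of Appendix~\ref{0 expansion}, and evaluating those integrals in hypergeometric form through \cite[Eq.~15.6.2]{NIST:DLMF}; once that is in place the present theorem is a corollary. Accordingly the main pitfall in this last step is purely bookkeeping — tracking the sign pattern $(-6)^j(-10)^j=60^j$, confirming which $\beta_{2g,j}$ vanish identically so that the exceptional small-$j$ counts are read off correctly, and matching the stated ranges of validity ($j\geq 2$ for $\mathcal{N}_1$, $j\geq 5$ for $\mathcal{N}_2$) with the corresponding ranges in Theorem~\ref{hexic rec theorem}.
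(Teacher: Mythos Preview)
Your proposal is correct and matches the paper's own approach: the paper explicitly states that Theorem~\ref{thm hexic 2-legged counts} is essentially a corollary of Theorem~\ref{hexic rec theorem} via \eqref{2legged counts}, and your detailed bookkeeping (the sign pattern $(-6)^j(-10)^j=60^j$, the exceptional small-$j$ values, and the ranges of validity) is exactly what is needed to carry this out.
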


\section{Series Expansion of $r_0(x;u)$}\label{0 expansion}
For completeness we derive equation \eqref{beta 0 def} which is a known result in the literature \cite[Theorem 2.1]{Ercolani-McLaughlin-Pierce}. 
\begin{equation}\label{beta 0 def}
    \beta_{0,j} = \left(-{2\nu-1 \choose \nu}\right)^j\frac{(j\nu)!}{j!(j(\nu-1)+1)!}x^{j(\nu-1)+1}.
\end{equation}
As noted in Remark \ref{string remark} we are readily able to observe that $r_0(x;u)$ satisfies the algebraic equation
\begin{equation}\label{string appendix}
    r_0 + u{2\nu-1 \choose \nu}r_0^\nu = x.
\end{equation}
Taking the derivative of \eqref{string appendix} with respect to $u$ we find that
\begin{equation}
    \frac{\dd r_0}{\dd u} = - \frac{{2\nu-1 \choose \nu}r_0^\nu}{1+{2\nu-1 \choose \nu}u\nu r_{0}^{\nu-1}}.
\end{equation}
We now repeat the analysis carried out in \cite{BD} to determine a closed form expression for the coefficients of the power series \eqref{r_2g series} of $r_0$. By the Cauchy residue theorem
\begin{equation}
    \beta_{0,j} = \frac{1}{2\pi i}\oint \frac{r_0}{u^{j+1}} \dd u.
\end{equation}
Thus,
\begin{eqnarray*}
    \beta_{0,j} &=& \frac{1}{2\pi i} \oint \left( \frac{r_0}{u^{j+1}} \right) \left( \frac{\dd u}{\dd r_0} \right)\dd r_0,\\
    &=& \frac{1}{2\pi i} \oint \left( \frac{r_0\left({2\nu-1 \choose \nu}r_0^\nu \right)^{j+1}}{(x-r_0)^{j+1}}\right) \left( \frac{1+{2\nu-1 \choose \nu}u\nu r_0^{\nu-1}}{{2\nu-1 \choose \nu}r_0^\nu}\right)\dd r_0,\\
    &=& (-1)^j\frac{1}{2\pi i} \oint \frac{\left({2\nu-1 \choose \nu}r_0^\nu \right)^{j}}{(r_0-x)^{j+1}} \left( r_0+{2\nu-1 \choose \nu}u\nu r_0^{\nu}\right) \dd r_0.
\end{eqnarray*}
Note that the original contour integral was around $u=0$ and after the change of variables the integral is now around $r_0=x$. Next, we make the new change of variables $z=r_{0}-x$, in the variable $z$ the above integral becomes
\begin{eqnarray*}
    \beta_{0,j} &=& \left(-{2\nu-1 \choose \nu}\right)^j\frac{1}{2\pi i} \oint \frac{\left(x+z \right)^{j\nu}}{z^{j+1}} \left( x + z(1-\nu)\right) \dd z,\\
    &=& \frac{\left(-{2\nu-1 \choose \nu}\right)^j}{2\pi i} \left( \oint \frac{\left(x+z \right)^{j\nu}}{z^{j+1}}xdz + \oint \frac{\left(x+z \right)^{j\nu}}{z^{j}}(1-\nu) \dd z \right),\\
    &=&\left(-{2\nu-1 \choose \nu}\right)^j \left( {j\nu \choose j} + (1-\nu) {j\nu \choose j-1} \right)x^{j(\nu-1)+1},\\
    &=& \left(-{2\nu-1 \choose \nu}\right)^j\frac{(j\nu)!}{j!(j(\nu-1)+1)!}x^{j(\nu-1)+1}.
\end{eqnarray*}

\section{Freud Equations}\label{Freud section}
Lemma \ref{Freudian} is well known in the literature, however we include a proof for completeness. This proof of Lemma \ref{Freudian} follows from the binomial expansion of $(1+x)^k$. First, observe that in order to prove the two properties of Freud equations presented in Lemma \ref{Freudian} we are only interested in the number of terms and the degree of the product of recurrence coefficients. The three term recurrence relation is given by
\[ z \mathcal{P}_n(z) = \mathcal{P}_{n+1}(z) + \mathcal{R}_n\mathcal{P}_{n-1}(z).\]
We are interested in the coefficient of the $\mathcal{P}_{n-1}(z)$ term in the expansion of $z^{2\nu-1}\mathcal{P}_n(z)$, this is what constitutes the Freud equation \cite{Maggy}. By repeated application of the recurrence relation one sees that, concerning the two properties we are interested in, this is directly analogous to the $\nu$ coefficient of $(1+x)^{2\nu-1}$. The result follows immediately. 

\section{Complementary Graph Counts Necessary to Prove Theorems \ref{thm:main-N} and \ref{thm:main-Ncal}}\label{sec further eqns} 
Below we add to the results of Theorems \ref{thm 2legged combinatorics} and \ref{thm combinatorics} and include further graph counts which are necessary to prove Theorems \ref{thm:main-N} and \ref{thm:main-Ncal}.\\

For fixed small values of \( g \) and \( j \), closed-form expressions for \( \mathcal{N}_{g}(2\nu,j) \) are given by $\mathcal{N}_{g}(2\nu,j)=c_\nu^j Q_{g,j}(\nu) $ where the explicit polynomials \( Q_{g,j}(\nu) \)  are defined below.
{\small
\begin{flalign*}
     Q_{2,4}(\nu) &= \frac{1}{45}(2\nu - 3)\bigg(7148\nu^6 - 38626\nu^5 + 80669\nu^4 - 
     82165\nu^3 + 42170\nu^2  \\& - 10072\nu + 840\bigg)\prod_{i=0}^1(\nu-i), & \\
    Q_{2,5}(\nu) &= \frac{5}{288}(5\nu - 7)\bigg(112625\nu^7 - 635499\nu^6 + 1441299\nu^5 - 
     1686937\nu^4 + 1086700\nu^3 \\& - 379100\nu^2 + 64800\nu - 4032\bigg)\prod_{i=0}^1(\nu-i),&\\
      Q_{2,6}(\nu) &= \frac{3}{10}(3\nu - 4)\bigg(344260\nu^8 - 2051842\nu^7 + 
     5062412\nu^6 - 6707321\nu^5 + 5175010\nu^4 \\&- 2355053\nu^3 + 
     608238\nu^2 - 79744\nu + 3920\bigg)\prod_{i=0}^1(\nu-i), & \\
     Q_{3,4}(\nu) &= \frac{1}{5670}\bigg(2207696\nu^9 - 23059170\nu^8 + 
      103014219\nu^7 - 257038215\nu^6 + 392010135\nu^5 \\& - 
      375285093\nu^4 + 222463588\nu^3 - 77228952\nu^2 \\ & + 13855392\nu - 
      937440\bigg)\prod_{i=0}^2(\nu-i), &\\
           Q_{3,5}(\nu) &=\frac{5}{72576}(5\nu - 9)\bigg(62522399\nu^9 - 515187180\nu^8 + 
      1815830526\nu^7 - 3570372984\nu^6 \\&+ 4281265095\nu^5 - 
      3213153660\nu^4 + 1489031548\nu^3 \\ & - 403491072\nu^2 + 
      56546496\nu  - 2999808\bigg)\prod_{i=0}^2(\nu-i), &\\
     Q_{3,6}(\nu) &= \frac{1}{420}(3\nu - 5)\bigg(153801520\nu^{11} - 1577943896\nu^{10} + 
     7116498472\nu^9 - 18554100415\nu^8 \\&+ 30928752050\nu^7 - 
     34413210643\nu^6 + 25892235846\nu^5 - 13053109770\nu^4 \\&+ 
     4269785220\nu^3 - 849274416\nu^2 + 90319392\nu - 3749760\bigg)\prod_{i=0}^1(\nu-i), &
           \end{flalign*}
\begin{flalign*}
     Q_{3,7}(\nu) &=\frac{7}{51840}(7\nu - 11)\bigg(57762660809\nu^{12} - 601237736085\nu^{11} + 
     2780241259726\nu^{10} \\&- 7528766160606\nu^9 + 13246913167689\nu^8 - 
     15881960187189\nu^7 \\ & + 13230141322096\nu^6  - 7662897894984\nu^5  + 
     3036359472752\nu^4\\& - 793729924176\nu^3 + 127961180928\nu^2 - 
     11172591360\nu + 385689600\bigg)\prod_{i=0}^1(\nu-i), &\\
          Q_{3,8}(\nu) &= \frac{16}{2835}(2\nu - 3)\bigg(240990999704\nu^{13} - 
     2564120927116\nu^{12} + 12230680621318\nu^{11} \\&- 
     34537507809530\nu^{10} + 64216395779166\nu^9  - 
     82712176120473\nu^8 + 75592119041851\nu^7  \\&- 
     49368109659701\nu^6 + 22889376111695\nu^5 - 7380035573626\nu^4 + 
     1591149962856\nu^3 \\ & - 214064248464\nu^2 + 15766081920\nu - 
     464032800\bigg)\prod_{i=0}^1(\nu-i), & \\
     Q_{3,9}(\nu) &= \frac{9}{4480}(9\nu - 13)\bigg(7633080358851\nu^{14} - 
     83402337357060\nu^{13} + 411753316768359\nu^{12} \\& - 
     1214643242298940\nu^{11} + 2385589025005169\nu^{10} - 
     3289824665249788\nu^9 \\&+ 3273296349535789\nu^8 - 
     2376973084782212\nu^7 + 1259442599876392\nu^6 \\& - 
     481497757955712\nu^5  + 129710174087952\nu^4 - 
     23628195523008\nu^3 \\&+ 2712360722688\nu^2 - 172021294080\nu + 
     4399718400\bigg)\prod_{i=0}^1(\nu-i), & \\
 Q_{4,4}(\nu) &= \frac{1}{1360800}(2\nu - 5)\bigg(260145536\nu^{11} - 
     3852856336\nu^{10} + 25119085320\nu^9 \\&
     - 94893927618\nu^8 + 
     229949004225\nu^7 - 373436213661\nu^6 \\ & + 411954757417\nu^5  - 
     305856912485\nu^4 + 147851057610\nu^3 \\&- 43504612200\nu^2 + 
     6825425472\nu - 414771840\bigg)\prod_{i=0}^2(\nu-i), & \\
Q_{4,5}(\nu) &= \frac{1}{3483648}(5\nu - 11)\bigg(26696728923\nu^{12} - 
     370952050974\nu^{11} + 2294541589387\nu^{10}\\& - 8333238528990\nu^9 + 
     19725191345949\nu^8  - 31923036291330\nu^7 \\& + 
     36022272022041\nu^6 - 28353612535626\nu^5 + 
     15306244304900\nu^4 \\ &- 5457861243000\nu^3 + 1199435076000\nu^2 - 
     142315315200\nu \\&+ 6636349440\bigg)\prod_{i=0}^2(\nu-i), & \\
Q_{4,6}(\nu) &= \frac{1}{33600}\bigg(106291233600\nu^{14} - 1641228544800\nu^{13} + 
     11489170902012\nu^{12} \\&- 48230829311284\nu^{11} + 
     135310877873729\nu^{10} - 267575283754675\nu^9 \\ & + 
     383229663323086\nu^8 - 402055567761002\nu^7 + 
     308782996266697\nu^6 \\&- 171569608958355\nu^5 + 
     67291398444732\nu^4 - 17856610032924\nu^3 \\ &+ 2983143643344\nu^2 - 
     274552796160\nu + 10138867200\bigg)\prod_{i=0}^2(\nu-i),& \\    
     Q_{4,7}(\nu) &= \frac{7}{12441600}(7\nu - 13)\bigg(59827528284865\nu^{14} - 
     792755101620269\nu^{13} + 4762127989292963\nu^{12} \\&- 
     17148907697572141\nu^{11} + 41246386612822161\nu^{10} - 
     69867418438924707\nu^9 \\&+ 85625003322460889\nu^8 - 
     76771321915272223\nu^7 + 50320568155406698\nu^6 \\&- 
     23829127833023204\nu^5 + 7954999368562248\nu^4 - 
     1794871514727936\nu^3 \\&+ 254788084469376\nu^2 - 
     19924229560320\nu + 625712947200\bigg)\prod_{i=0}^2(\nu-i),& \\
Q_{4,8}(\nu) &= \frac{1}{42525}(4\nu - 7)\bigg(176898841310688\nu^{16} - 
     2693497251490416\nu^{15}  + 18811324769190752\nu^{14} \\&- 
     79856217753482200\nu^{13} + 230183164994132056\nu^{12}  - 
     476637233352493472\nu^{11} \\&+ 731499443164185318\nu^{10} - 
     846140169372817956\nu^9 + 742806644356904671\nu^8 \\ & - 
     494377579762009877\nu^7  + 247310205535113371\nu^6- 
     91409482078529839\nu^5 \\&+ 24270884829919140\nu^4 - 
     4427423431851420\nu^3 + 515706668847504\nu^2  \\&- 
     33559861576320\nu + 889685596800\bigg)\prod_{i=0}^1(\nu-i), & \\
Q_{4,9}(\nu) &= \frac{9}{358400}(3\nu - 5)\bigg(15226246439849967\nu^{17} - 
     233157469299715206\nu^{16} \\& + 1645659359908858504\nu^{15}  - 
     7099851100561009676\nu^{14} \\ &+ 20933861891087217190\nu^{13} - 
     44678011405022416136\nu^{12} \\&+ 71310247178220382424\nu^{11} - 
     86715146850133389892\nu^{10} \\ & + 81086399883902809283\nu^9 - 
     58428598096113549618\nu^8 \\&+ 32304978976301131416\nu^7 - 
     13556905700761904080\nu^6 \\ &+ 4238229641600620080\nu^5 - 
     958623064855007520\nu^4\\& + 149921391481410816\nu^3 - 
     15061580882652672\nu^2 \\ &+ 850381337272320\nu - 
     19682920857600\bigg)\prod_{i=0}^1(\nu-i), & 
     \end{flalign*}
\begin{flalign*}
Q_{4,10}(\nu) & =\frac{5}{27216}(5\nu - 8)\bigg(85562694562591904\nu^{18} - 
     1324327958855284160\nu^{17} \\&+ 9489582662393397880\nu^{16} - 
     41771977818193676192\nu^{15} \\ &+ 126386220800651384956\nu^{14} - 
     278635083361065737240\nu^{13} \\&+ 462965053710797027613\nu^{12} - 
     591451799686117800136\nu^{11} \\ &+ 587429197232582640311\nu^{10} - 
     455605936414839442102\nu^9 \\&+ 275628018515897406119\nu^8 - 
     129214946680960956196\nu^7 \\ &+ 46358891319850977757\nu^6 - 
     12478492347822559134\nu^5 \\&+ 2445372448774902900\nu^4 - 
     333234184824131400\nu^3 \\ &+ 29327442415777440\nu^2 - 
     1458268956910080\nu \\&+ 29893436052480\bigg)\prod_{i=0}^1(\nu-i), & \\
Q_{4,11}(\nu) &=\frac{121}{87091200}(11\nu - 17)\bigg(354316216480761305925\nu^{19} - 
     5562857674691886437505\nu^{18} \\&+ 40594391707077586220794\nu^{17} - 
     182794537273475259575828\nu^{16} \\&+ 
     568651781959247875285078\nu^{15} - 
     1296530542543012381738790\nu^{14} \\ &+ 
     2242958540483603888345008\nu^{13} - 
     3006984730064022233375036\nu^{12} \\&+ 
     3163217844197388143476541\nu^{11} - 
     2627392544910792009694225\nu^{10}\\& + 
     1725172320938626212605822\nu^9 - 892393102172221303570984\nu^8 \\ & + 
     360687187175899070572064\nu^7 - 112357655330246151693520\nu^6 \\&+ 
     26421675360918138122976\nu^5 - 4548476866591685693952\nu^4  \\&+ 
     547259187982084756992\nu^3 - 42729584823703388160\nu^2 \\ &+ 
     1893977522609356800\nu - 34785089224704000\bigg)\prod_{i=0}^1(\nu-i), & \\ 
     Q_{4,12}(\nu) &=\frac{18}{175}(2\nu - 3)\bigg(1837389089069015040\nu^{20} - 
     29335825906712036736\nu^{19}  \\&+ 218482585015010236144\nu^{18} - 
     1008124838237212680068\nu^{17} \\ &+ 3228291236767125505622\nu^{16} - 
     7616016312301978575810\nu^{15} \\&+ 13713633029886554159198\nu^{14} - 
     19266820676488675236606\nu^{13} \\ & + 21409294239729359555824\nu^{12} - 
     18960352018928272710554\nu^{11}  \\&+ 13422333631931283792977\nu^{10} - 
     7586743646823016231884\nu^9   \\ & +3406451051822739448326\nu^8 - 
     1203540294803764991418\nu^7  \\&+ 329739294408248025601\nu^6 - 
     68567041663186648492\nu^5 \\ &+ 10488956102033829188\nu^4 - 
     1126520901427835232\nu^3 \\&+ 78856158406678080\nu^2 - 
     3147138574675200\nu  \\&+ 52282758528000\bigg)\prod_{i=0}^1(\nu-i). &
\end{flalign*}}

For fixed small values of \( g >0 \) and \( j \), closed-form expressions for \( \mathscr{N}_{g}(2\nu,j) \) are given by $\mathscr{N}_{g}(2\nu,j) = c_{\nu}^jS_{g,j}(\nu)$ where the explicit polynomials \( S_{g,j}(\nu) \)  are defined below. Note that in this section we are using the notation $\mathscr{N}_{g}(2\nu,j) = c_{\nu}^jS_{g,j}(\nu)$ not, $\mathscr{N}_{g}(2\nu,j) = C_{\nu}^jS_{g,j}(\nu)$ as used in Theorem \ref{thm 2legged combinatorics}, where the constants $c_\nu$ and $C_\nu$ (Catalan number) are related by $c_\nu = \nu(\nu+1)C_\nu$.
{\small
\begin{eqnarray*}
    S_{2,4}(\nu) &=& \frac{1}{360}(\nu - 1)\bigg(7148\nu^6 - 32946\nu^5 + 57857\nu^4 - 48477\nu^3  + 19778\nu^2 - 3504\nu  + 180\bigg),\\
    S_{3,4}(\nu) &=& \frac{1}{90720}(\nu - 1)(\nu - 2)\bigg(2207696\nu^8 - 16242050\nu^7 + 49364471\nu^6  - 79932137\nu^5 \\
    &&+ 74043341\nu^4 - 39060533\nu^3 + 10921512\nu^2  - 1335780\nu + 37800\bigg),\\
    S_{3,5}(\nu) &=& \frac{1}{72576}(\nu-1)\bigg(62522399\nu^{10} - 581103853\nu^9 + 2326946286\nu^8  - 5250945186\nu^7  \\
    && + 7329256599\nu^6 - 6532688373\nu^5 + 3701615836\nu^4  - 1282650908\nu^3  \\
    &&+ 248644320\nu^2 - 22098240\nu + 483840\bigg), \\ 
    S_{3,6}(\nu) &=& \frac{1}{5040}(\nu - 1)\bigg(153801520\nu^{11} - 1447813616\nu^{10} + 5955888280\nu^9 - 14058047545\nu^8  \\
    &&+ 21012908900\nu^7 - 20703187408\nu^6 + 13560491070\nu^5  - 5807949975\nu^4 \\&& + 1554253470\nu^3 - 236885256\nu^2 +  16835760\nu - 302400\bigg), \\
        S_{3,7}(\nu) &=& \frac{1}{51840}(\nu - 1)\bigg(57762660809\nu^{12} - 556670693418\nu^{11} + 2372309923585\nu^{10}  \\
    &&- 5886373358850\nu^9 + 9421948239807\nu^8  - 10182012470334\nu^7  \\
    &&+ 7553393392915\nu^6  - 3831963508110\nu^5 
    +1298418268004\nu^4 
    \\
    &&- 279586295688\nu^3  + 34789466880\nu^2   - 2047248000\nu + 31104000\bigg),\\
   S_{4,4}(\nu) &=& \frac{1}{10886400}(\nu - 1)(\nu - 2)\bigg(260145536\nu^{11} - 3499624976\nu^{10} + 
     20557992264\nu^9 \\ &&  - 69254891538\nu^8 + 147655647081\nu^7 - 
     207277108965\nu^6 + 192941777329\nu^5 \\&& - 116777265325\nu^4 + 
     43634464794\nu^3 - 9043717896\nu^2 + 813468096\nu - 
     11430720\bigg), 
        \end{eqnarray*} }
{\small
\begin{eqnarray*} 
 S_{4,5}(\nu) &=&\frac{1}{17418240}(\nu - 1)(\nu - 2)\bigg(26696728923\nu^{12} - 339690851474\nu^{11} + 
     1912242628787\nu^{10} \\ && - 6271005358290\nu^9  + 13270755972549\nu^8  - 
     18956834778030\nu^7 + 18565070459121\nu^6 \\ &&- 
     12393974046406\nu^5 + 5490182452540\nu^4 - 1525755421800\nu^3 + 
     238554360000\nu^2 \\&&\,- 16429392000\nu + 182891520\bigg),\\
 S_{4,6}(\nu) &=&\frac{1}{403200}(\nu - 1)(\nu - 2)\bigg(35430411200\nu^{13} - 439302994400\nu^{12}  + 
     2436971579924\nu^{11}\\&&  - 7978872930452\nu^{10} + 
     17124254615635\nu^9  - 25296180149635\nu^8 \\&& + 
     26269123904332\nu^7 - 19231646818886\nu^6 + 9796221569255\nu^5\\&&  - 
     3364159070155\nu^4 + 734427963174\nu^3 - 91281579672\nu^2\\&& + 
     5059464480\nu - 46569600\bigg), \\
     S_{4,7}(\nu) &=&\frac{1}{12441600}(\nu - 1)\bigg(59827528284865\nu^{15} - 855802750597179\nu^{14} + 
     5563694303002489\nu^{13}\\&& - 21750962904597519\nu^{12} + 
     57013384440297127\nu^{11}  - 105748729961235033\nu^{10}\\&& + 
     142745418985420307\nu^9 - 142003639611680997\nu^8 + 
     104226950891832476\nu^7 \\&&- 55920917465621352\nu^6 + 
     21478178444606384\nu^5  - 5692881115155984\nu^4 \\&&+ 
     978640246256832\nu^3 - 97543773524736\nu^2  + 4417361464320\nu - 
     34488115200\bigg),\\
 S_{4,8}(\nu) &=&\frac{1}{680400}(\nu - 1)\bigg(176898841310688\nu^{16} - 2539263011679856\nu^{15} + 
     16666305507262944\nu^{14} \\&& - 66246753115795672\nu^{13} + 
     178029651854014296\nu^{12}  - 341938430331281392\nu^{11} \\&& + 
     483779297595483414\nu^{10} - 512027961013384676\nu^9  + 
     407465369199877959\nu^8 \\&& - 242914597181711693\nu^7 + 
     107143566154013235\nu^6  - 34166128376363207\nu^5 \\&& + 
     7581940010533812\nu^4 - 1099393946405244\nu^3 + 
     93081049823952\nu^2 - 3607697439360\nu \\&& + 24518894400\bigg),\\
     S_{4,9}(\nu) &=&\frac{1}{1075200}(\nu - 1)\bigg(15226246439849967\nu^{17} - 220791879118376586\nu^{16} \\&&+ 
     1471803789729997792\nu^{15} - 5978558980896826172\nu^{14} + 
     16537756892056748422\nu^{13} \\&& - 32974240831015598528\nu^{12} + 
     48923952330822382696\nu^{11} - 54973107177374273684\nu^{10} \\ && + 
     47152509137361769699\nu^9 - 30881852184541961174\nu^8  + 
     15337643160511165168\nu^7 \\&&- 5692202029948990128\nu^6 + 
     1540102222359466992\nu^5 - 292096179194032992\nu^4 \\&&+ 
     36436319876824704\nu^3 - 2670501911809536\nu^2 + 
     90208099215360\nu - 542442700800\bigg),\\
 S_{4,10}(\nu) &=&\frac{1}{108864}(\nu - 1)\bigg(85562694562591904\nu^{18} - 1259062740590420160\nu^{17} \\&& + 
     8557751365316924280\nu^{16} - 35638727342326058592\nu^{15} \\&&+ 
     101707256601030693756\nu^{14} - 210757074791817347640\nu^{13} \\&&+ 
     327796914873391461053\nu^{12} - 390093042791014411536\nu^{11} \\&&+ 
     358792753565910136791\nu^{10} - 255848897862582292782\nu^9 \\&&+ 
     141024019215380064999\nu^8 - 59538088006286478876\nu^7 \\&&+ 
     18940378557686606717\nu^6 - 4424281811064392454\nu^5 \\&&+ 
     729043441916539860\nu^4 - 79471448739520200\nu^3 \\&&+ 
     5118523141929120\nu^2 - 152866927866240\nu + 
     823834851840\bigg).
\end{eqnarray*}
}

\section{Number of Labeled Connected $4$-valent Graphs with One or Two Vertices on the Sphere and the Torus}\label{Appendix Number of graphs}

In this appendix we include some illustrations as examples of graphical interpretations for the formulae in Theorem \ref{thm combinatorics}\footnote{These illustrations  also appeared in \cite{BGM} to enhance the interpretation of \cite[Equation (6.140)]{BGM} and \cite[Equation (6.141)]{BGM} for planar and toroidal counts of regular 4-valent graphs.}. We specifically do this for four-valent graphs with one and two vertices and will focus on the four formulae in Theorem \ref{thm combinatorics} corresponding to the choices $(\nu,g,j)\in\{(2,0,1),(2,0,2),(2,1,1),(2,1,2)\}$. To this end, recall that
 $C_2=2$, $S_{0,1}(2)=1$, $S_{0,2}(2)=9$, $S_{1,1}(2)=1/2$, and $S_{1,2}(2)=15$. So, from Theorem \ref{thm combinatorics} we find

\begin{equation}\label{numbers j=1 and j=2} \begin{split}
    	&\mathscr{N}_0(4,1)=C_2S_{0,1}(2)=2, \qquad 	\mathscr{N}_1(4,1)=C_2S_{1,1}(2)=1, \\ 	&\mathscr{N}_0(4,2)=C^2_2S_{0,2}(2)=36, \qquad	\mathscr{N}_1(4,2)=C^2_2S_{1,2}(2)=60.        
\end{split}
\end{equation}

The first two members of \eqref{numbers j=1 and j=2} are easy to verify. Consider a labeled 4-valent graph with one vertex \(v\). There are two ways to connect adjacent edges on the sphere, giving \(\mathscr{N}_0(4,1)=2\). Connecting opposite edges yields a graph only realizable on the torus, giving \(\mathscr{N}_1(4,1)=1\).

To justify the third member of \eqref{numbers j=1 and j=2}, label the two vertices \(v_1\) and \(v_2\), with edges \(e_1\) through \(e_4\) at \(v_1\) and \(e_5\) through \(e_8\) at \(v_2\), each labeled counterclockwise. A connection \(e_j \leftrightarrow e_k\) links edges. Starting with \(e_1\), it cannot connect to \(e_3\), as that would leave either \(e_2\) or \(e_4\) unmatched. It can connect to \(e_2\) or \(e_4\) in 8 distinct graphs, and to any of \(e_5\)–\(e_8\) in 5 distinct graphs each, confirming \(\mathscr{N}_0(4,2) = 2\cdot8 + 4\cdot5 = 36\). Figures~\ref{fig: g=0,1 j=2, e1->e_2} and~\ref{fig: g=0,1 j=2, e1->e_6} illustrate these cases\footnote{ In Figures~\ref{fig: g=0,1 j=2, e1->e_2}–\ref{fig: g=1 j=2, e1->e_6}, each edge \(e_k\) is simply denoted by \(k\).}.

\begin{figure}[h]
	\centering
	\begin{minipage}{0.2\textwidth}
		\centering
		\includegraphics[width=\textwidth, alt={All eight labeled connected 4-valent graphs with two vertices, realizable on the sphere with edge $e_1$ joined to $e_2$.}]{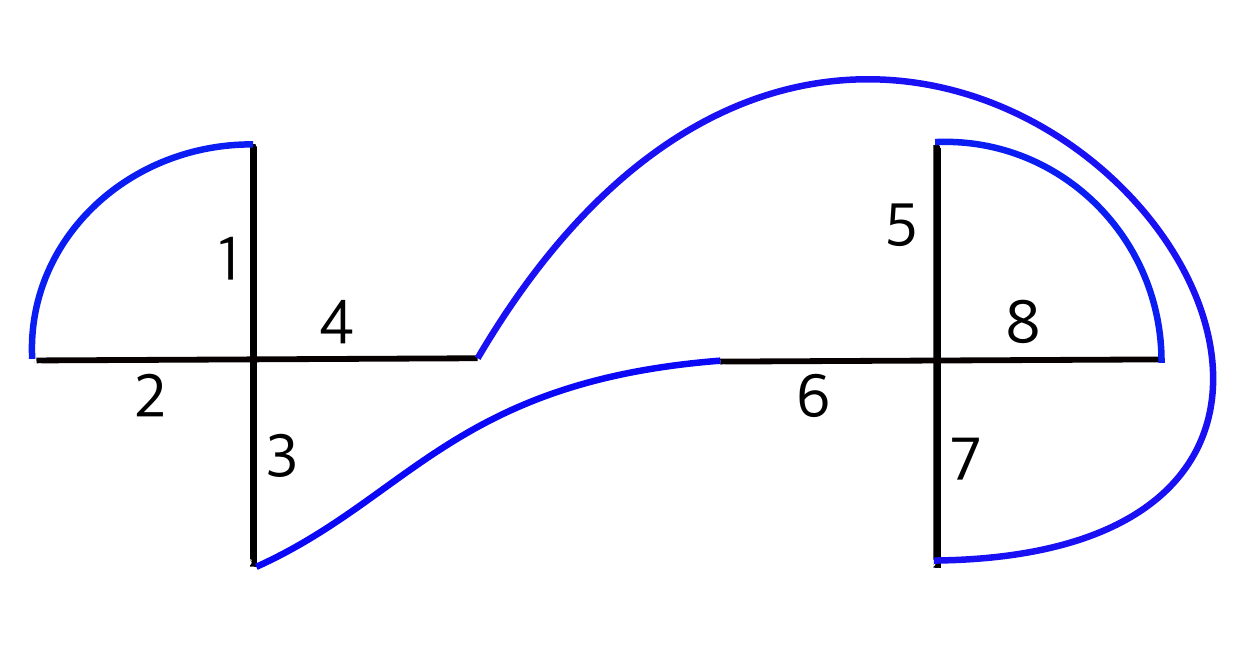}
        \subcaption{}
	\end{minipage} \hspace{0.3cm}
	\begin{minipage}{0.2\textwidth}
		\centering
		\includegraphics[width=\textwidth, alt={All eight labeled connected 4-valent graphs with two vertices, realizable on the sphere with edge $e_1$ joined to $e_2$.}]{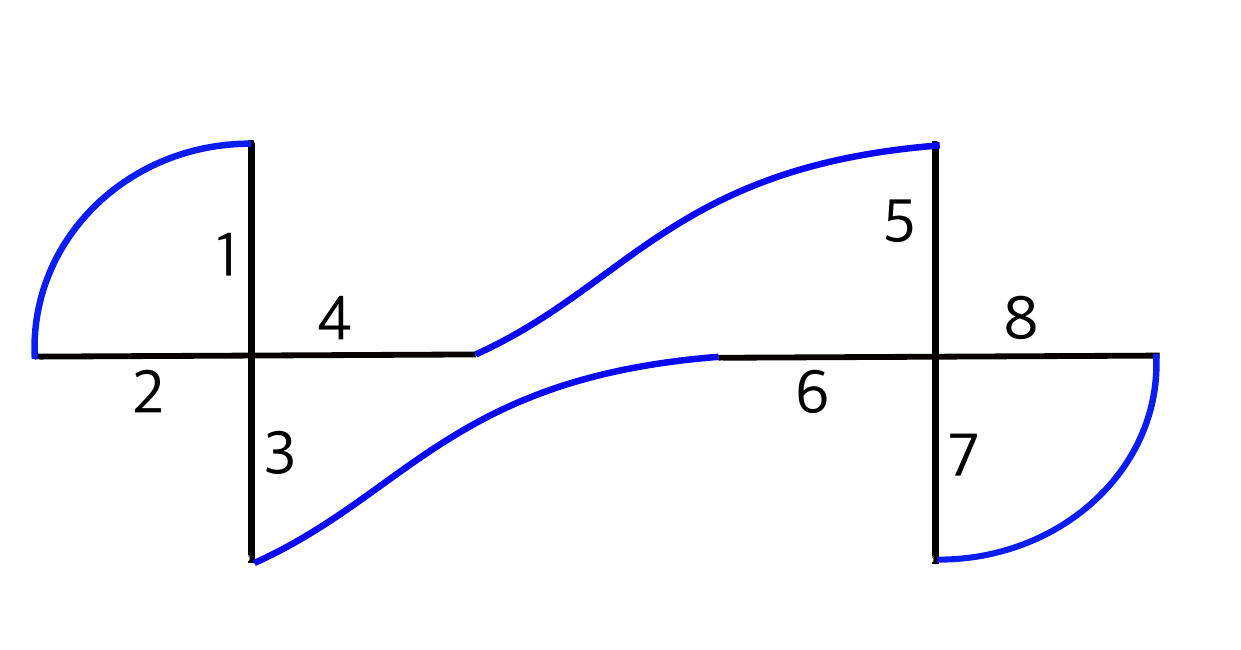}
        \subcaption{}
	\end{minipage} \hspace{0.3cm}
	\begin{minipage}{0.2\textwidth}
		\centering
		\includegraphics[width=\textwidth, alt={All eight labeled connected 4-valent graphs with two vertices, realizable on the sphere with edge $e_1$ joined to $e_2$.}]{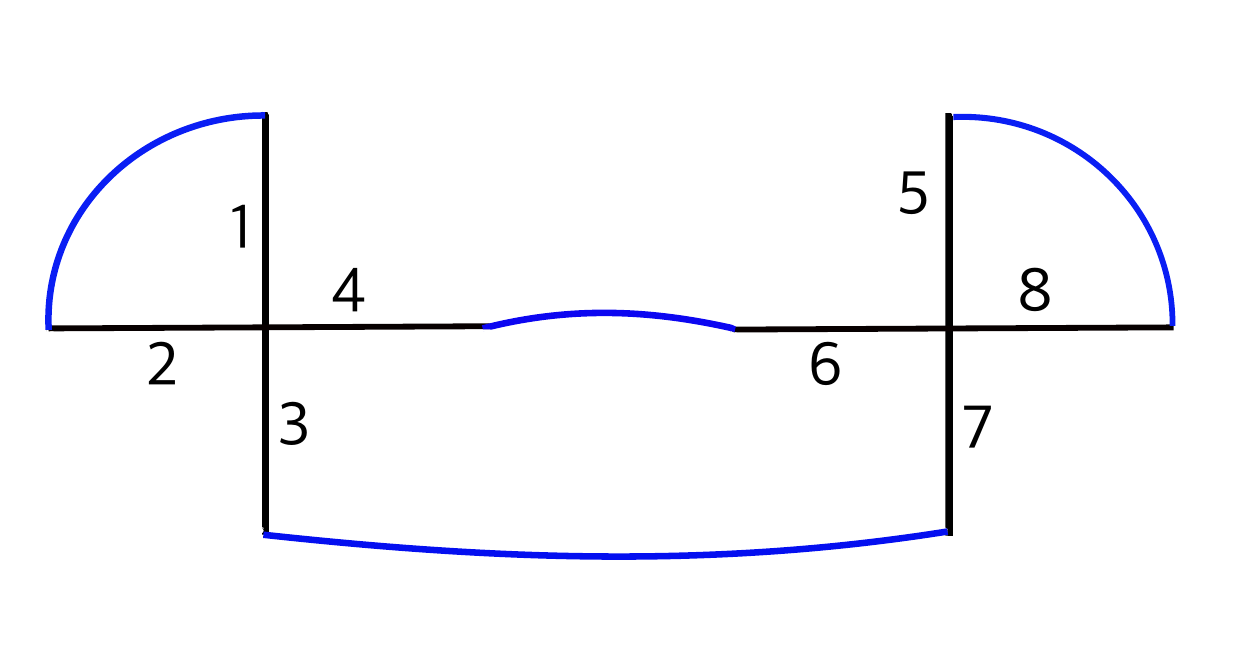}
        \subcaption{}
	\end{minipage} \hspace{0.3cm}
	\begin{minipage}{0.2\textwidth}
		\centering
		\includegraphics[width=\textwidth, alt={All eight labeled connected 4-valent graphs with two vertices, realizable on the sphere with edge $e_1$ joined to $e_2$.}]{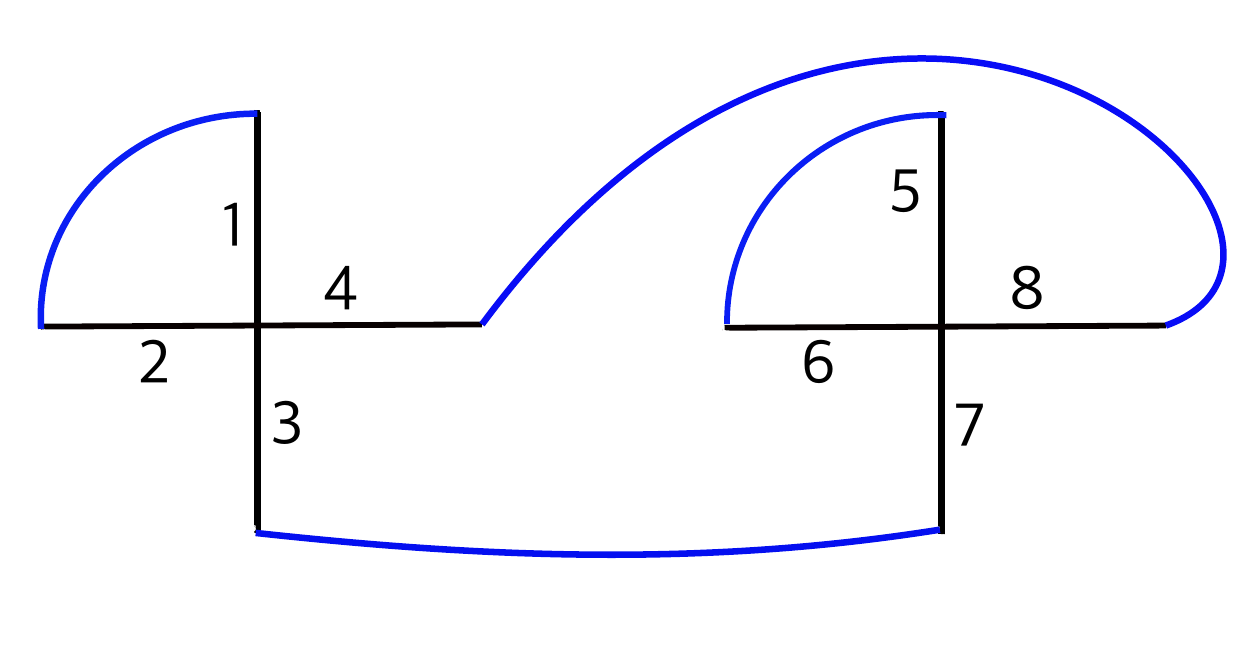}
        \subcaption{}
	\end{minipage}
	\centering
	\begin{minipage}{0.2\textwidth}
		\centering
		\includegraphics[width=\textwidth, alt={All eight labeled connected 4-valent graphs with two vertices, realizable on the sphere with edge $e_1$ joined to $e_2$.}]{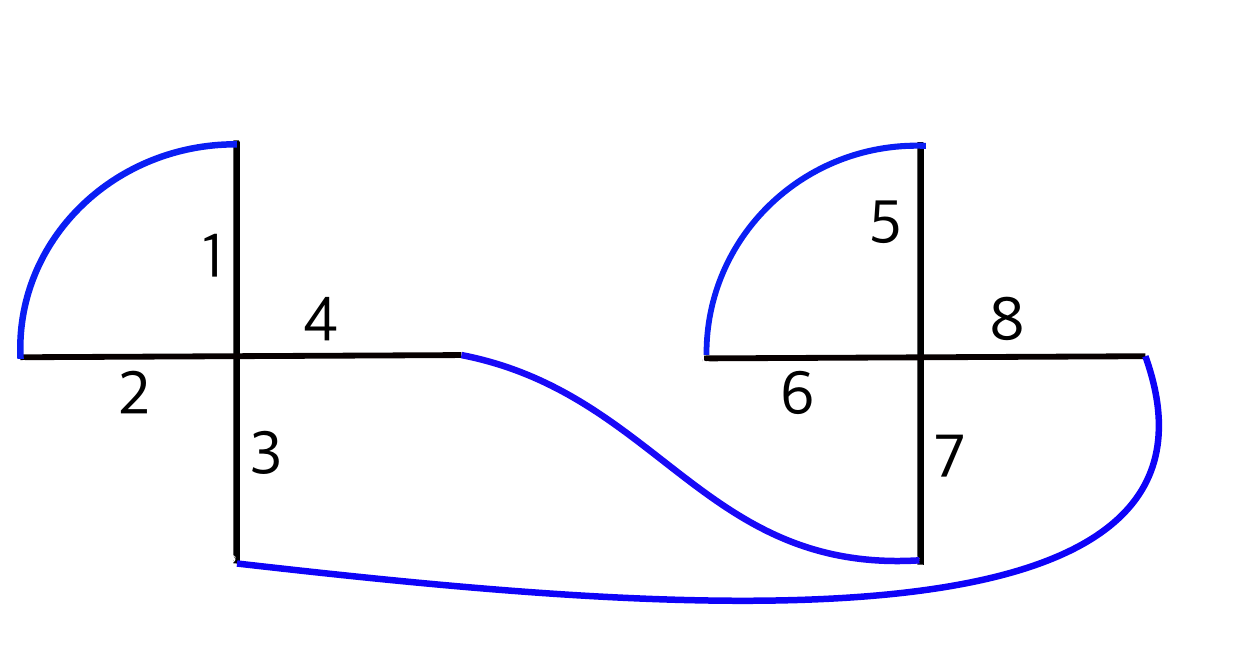}
        \subcaption{}
	\end{minipage} \hspace{0.3cm}
	\begin{minipage}{0.2\textwidth}
		\centering
		\includegraphics[width=\textwidth, alt={All eight labeled connected 4-valent graphs with two vertices, realizable on the sphere with edge $e_1$ joined to $e_2$.}]{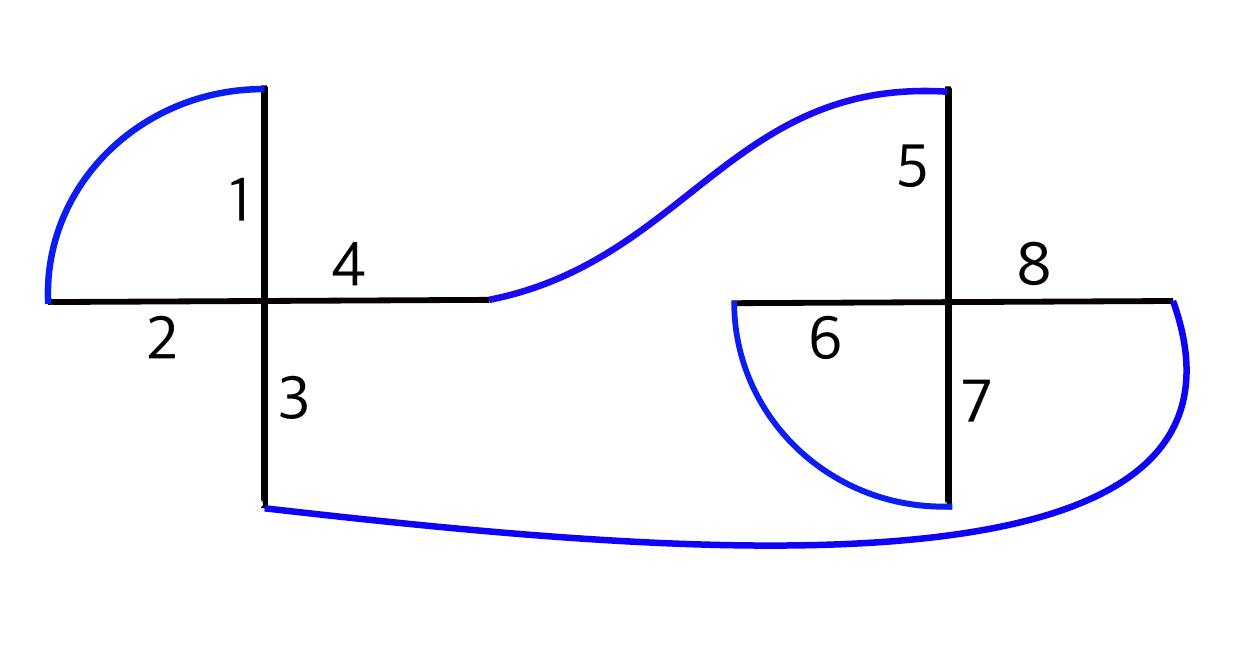}
        \subcaption{}
	\end{minipage} \hspace{0.3cm}
	\begin{minipage}{0.2\textwidth}
		\centering
		\includegraphics[width=\textwidth, alt={All eight labeled connected 4-valent graphs with two vertices, realizable on the sphere with edge $e_1$ joined to $e_2$.}]{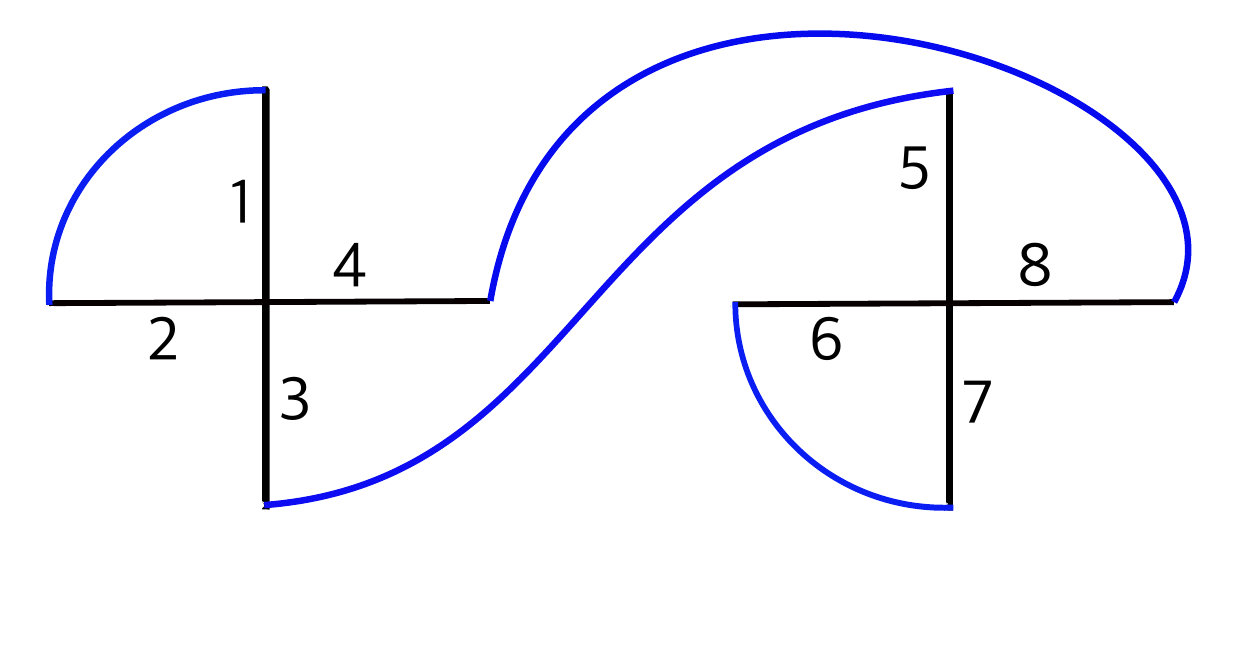}
        \subcaption{}
	\end{minipage} \hspace{0.3cm}
	\begin{minipage}{0.2\textwidth}
		\centering
		\includegraphics[width=\textwidth, alt={All eight labeled connected 4-valent graphs with two vertices, realizable on the sphere with edge $e_1$ joined to $e_2$.}]{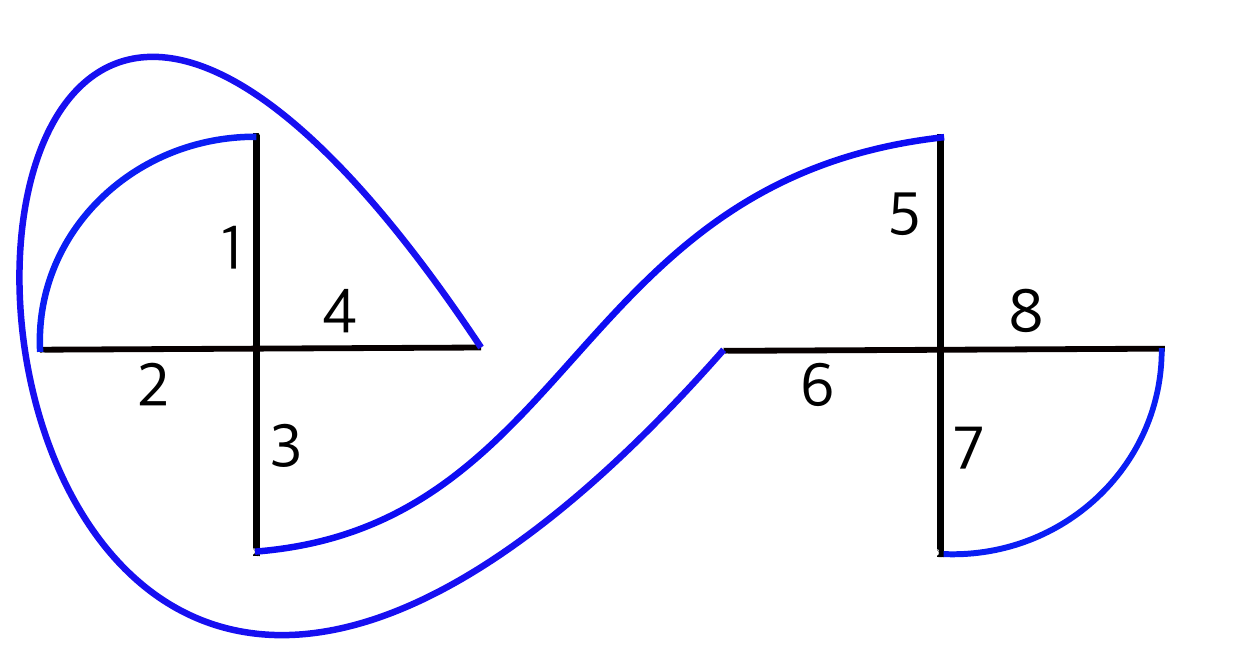}
        \subcaption{}
	\end{minipage}%
	\caption{All eight labeled connected 4-valent graphs with two vertices, where $e_1$ connects to $e_2$ and realizable on the sphere. Identically, for the case where $e_1$ connects to $e_4$, there are also eight distinct graphs. For the simplicity of the Figures \ref{fig: g=0,1 j=2, e1->e_2}, \ref{fig: g=0,1 j=2, e1->e_6}, \ref{fig: g=1 j=2, e1->e_2}, \ref{fig: g=1 j=2, e1->e_3}, and \ref{fig: g=1 j=2, e1->e_6} an edge $e_k$ will be simply denoted by $k$ on the graphs.}
	\label{fig: g=0,1 j=2, e1->e_2}
\end{figure}
\begin{figure}[h]
	\centering
	\begin{minipage}{0.2\textwidth}
		\centering
		\includegraphics[width=\textwidth, alt={All five labeled connected 4-valent graphs with two vertices, where $e_1$ connects to $e_6$ and realizable on the sphere.}]{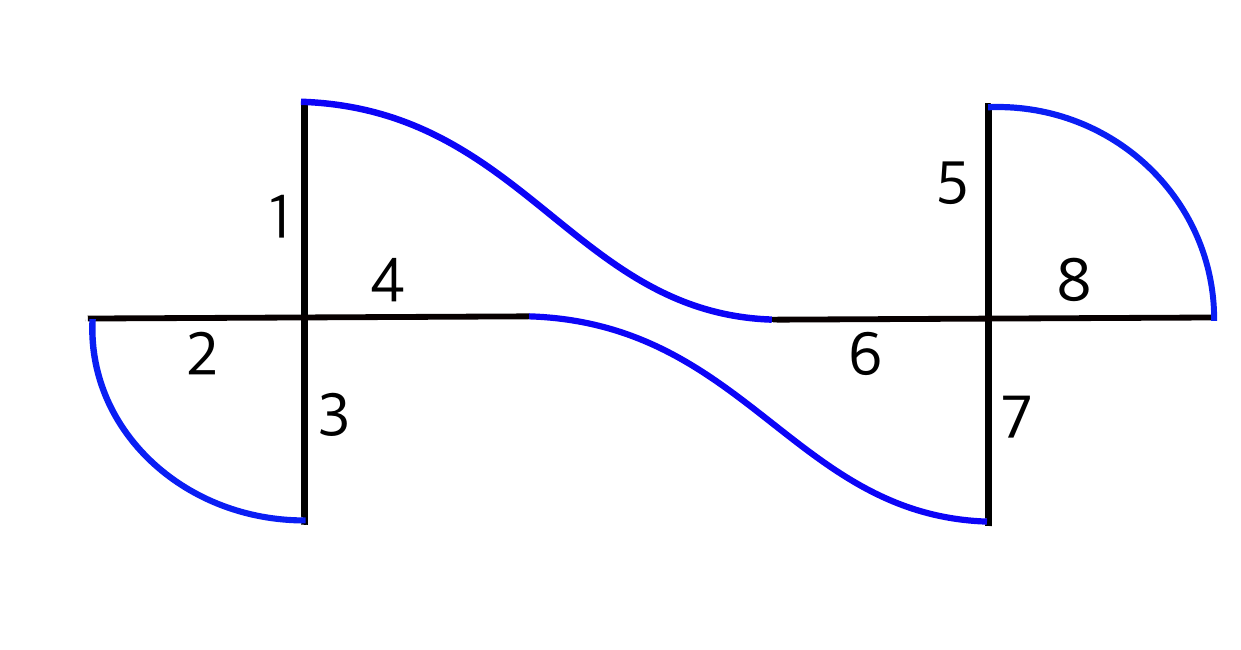}
        \subcaption{}
	\end{minipage} \hspace{0.5cm}
	\begin{minipage}{0.2\textwidth}
		\centering
		\includegraphics[width=\textwidth, alt={All five labeled connected 4-valent graphs with two vertices, where $e_1$ connects to $e_6$ and realizable on the sphere.}]{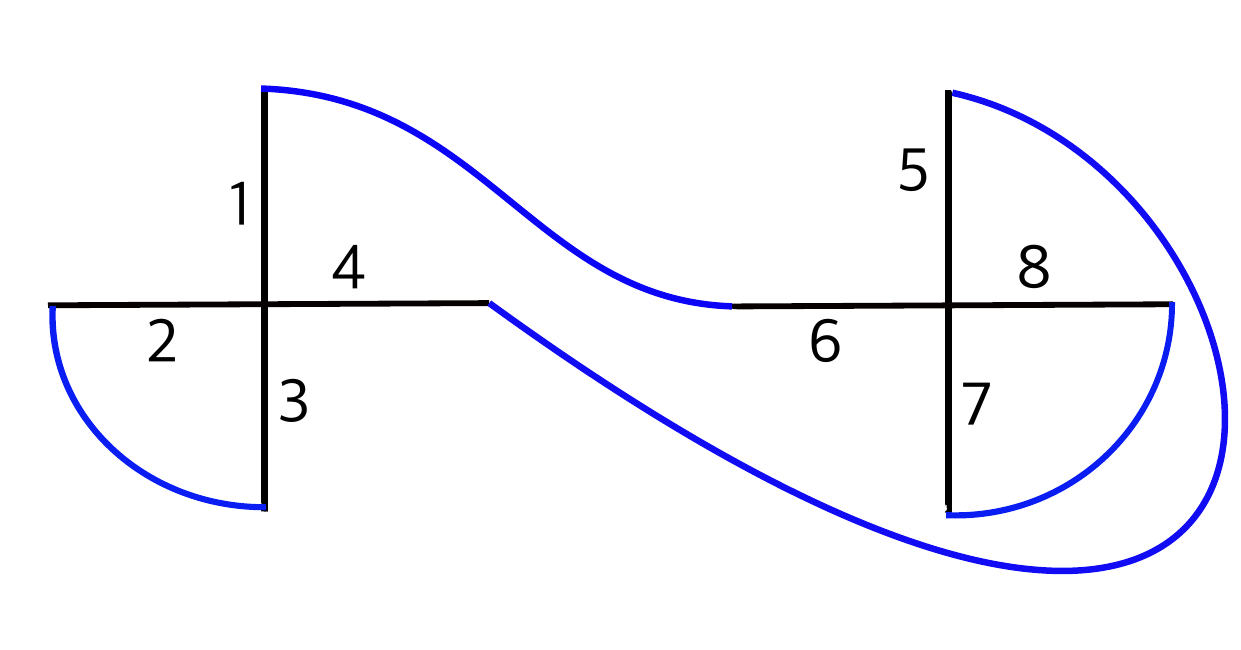}
        \subcaption{}
	\end{minipage} \hspace{0.5cm}
	\begin{minipage}{0.2\textwidth}
		\centering
		\includegraphics[width=\textwidth, alt={All five labeled connected 4-valent graphs with two vertices, where $e_1$ connects to $e_6$ and realizable on the sphere.}]{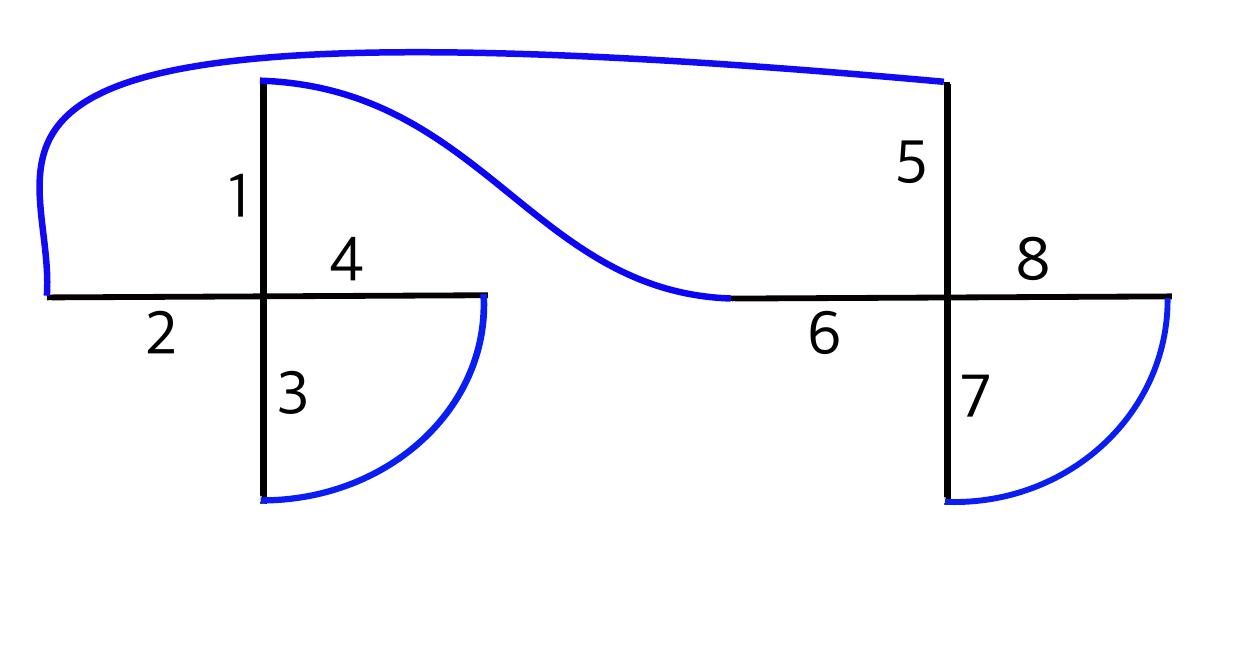}
        \subcaption{}
	\end{minipage}
	\centering
	\begin{minipage}{0.2\textwidth}
		\centering
		\includegraphics[width=\textwidth, alt={All five labeled connected 4-valent graphs with two vertices, where $e_1$ connects to $e_6$ and realizable on the sphere.}]{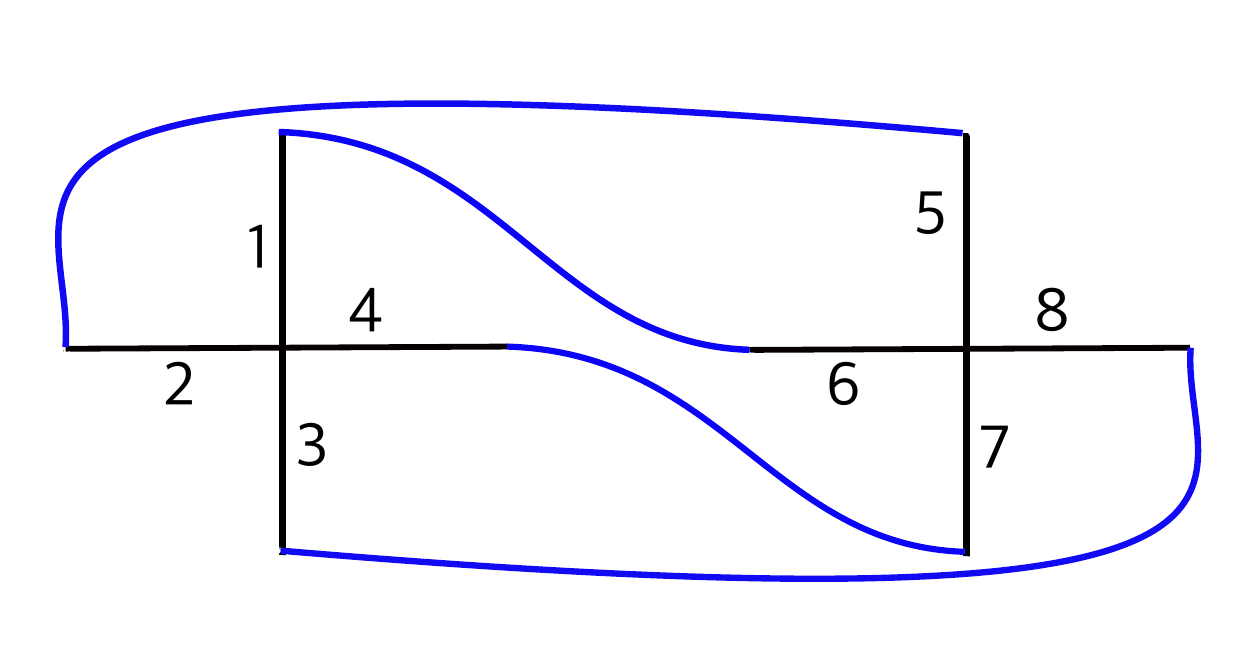}
        \subcaption{}
	\end{minipage} \hspace{0.5cm}
	\begin{minipage}{0.2\textwidth}
		\centering
		\includegraphics[width=\textwidth, alt={All five labeled connected 4-valent graphs with two vertices, where $e_1$ connects to $e_6$ and realizable on the sphere.}]{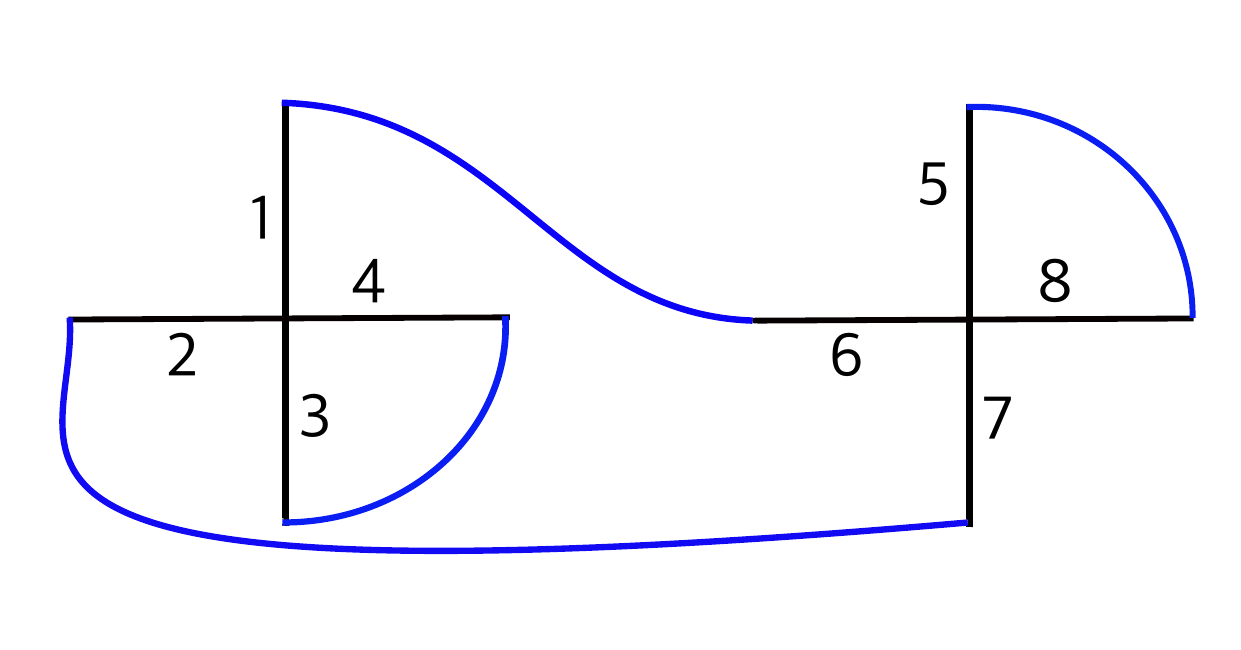}
        \subcaption{}
	\end{minipage} 
	\caption{All five labeled connected 4-valent graphs with two vertices, where $e_1$ connects to $e_6$ and realizable on the sphere. Identically, for each of the cases where $e_1$ connects to $e_5$, $e_7$, or $e_8$, there are also five distinct graphs. this Figure together with Figure \ref{fig: g=0,1 j=2, e1->e_2} confirm that $\mathscr{N}_2(0)=2\cdot8+4\cdot5=36$.}
	\label{fig: g=0,1 j=2, e1->e_6}
\end{figure}

We now interpret \(\mathscr{N}_1(4,2) = 60\) combinatorially. With two 4-valent vertices, it is easy to see that there are three distinct connected labeled graphs with two enforced connections. Since two graphs with connections \(e_1 \leftrightarrow e_2\) and \(e_3 \leftrightarrow e_6\) already appear on the sphere (see the first two graphs in Figure~\ref{fig: g=0,1 j=2, e1->e_2}), one more remains to be realized on the torus. The same holds for the combinations a) \(e_1 \leftrightarrow e_2\) \& \(e_3 \leftrightarrow e_7\), b) \(e_1 \leftrightarrow e_2\) \& \(e_3 \leftrightarrow e_8\), and c) \(e_1 \leftrightarrow e_2\) \& \(e_3 \leftrightarrow e_5\), giving four graphs in total with \(e_1 \leftrightarrow e_2\) on the torus. Additionally, four graphs with \(e_1 \leftrightarrow e_4\) are realizable only on the torus, totaling eight such graphs (see Figure~\ref{fig: g=1 j=2, e1->e_2}). 

\begin{figure}[!htp]
	\centering
	\begin{minipage}{0.24\textwidth}
		\centering
		\includegraphics[width=\textwidth, alt={All four labeled connected 4-valent graphs with two vertices, where $e_1$ connects to $e_2$ which are not realizable on the sphere.}]{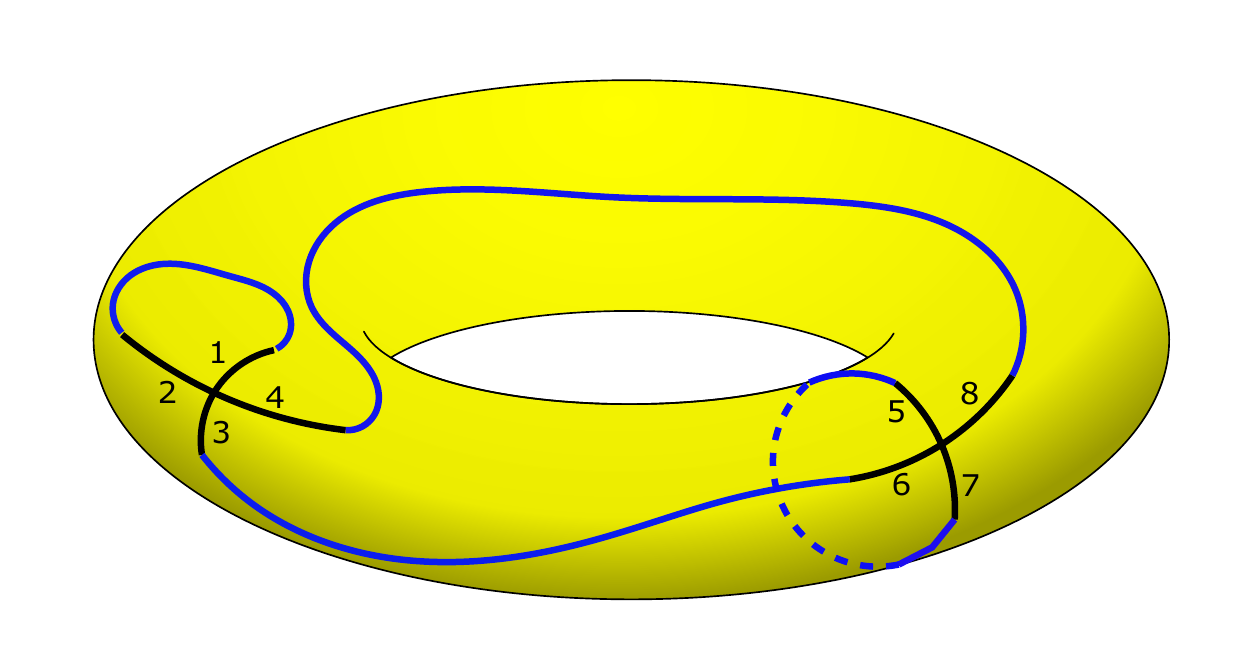}
        \subcaption{}
	\end{minipage} \hfil
	\begin{minipage}{0.24\textwidth}
		\centering
		\includegraphics[width=\textwidth, alt={All four labeled connected 4-valent graphs with two vertices, where $e_1$ connects to $e_2$ which are not realizable on the sphere.}]{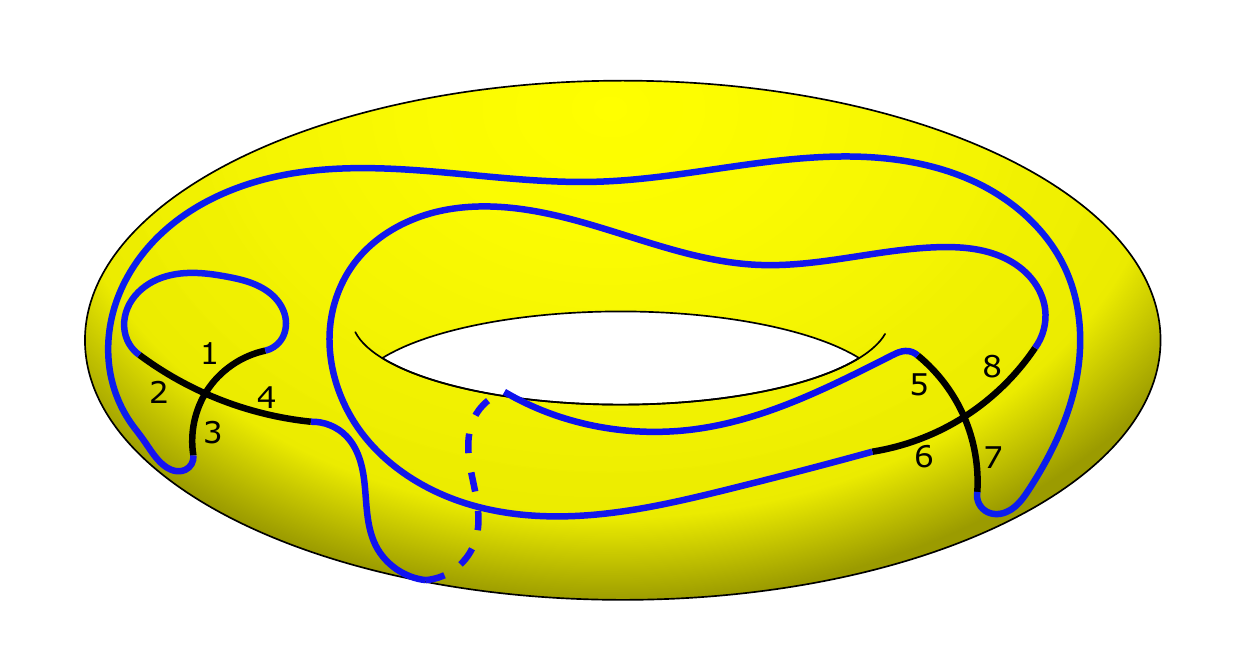}
        \subcaption{}
	\end{minipage} \hfil
	\begin{minipage}{0.24\textwidth}
		\centering
		\includegraphics[width=\textwidth, alt={All four labeled connected 4-valent graphs with two vertices, where $e_1$ connects to $e_2$ which are not realizable on the sphere.}]{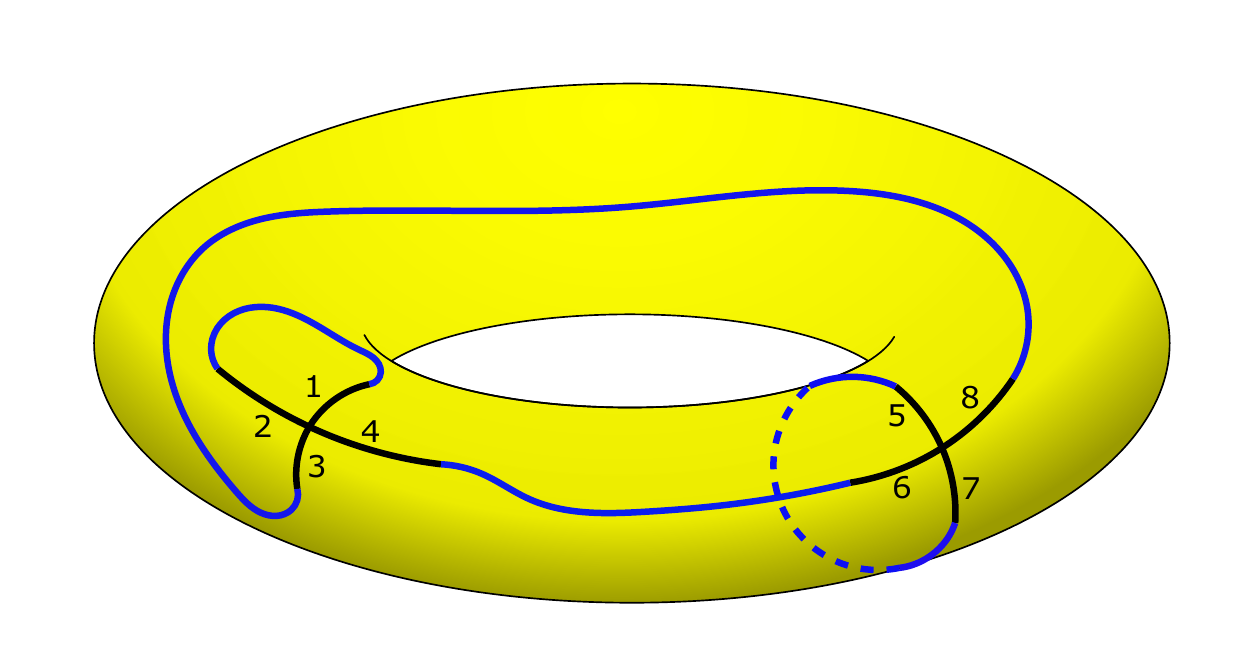}
        \subcaption{}
	\end{minipage}
	\hfil
	\begin{minipage}{0.24\textwidth}
		\centering
		\includegraphics[width=\textwidth, alt={All four labeled connected 4-valent graphs with two vertices, where $e_1$ connects to $e_2$ which are not realizable on the sphere.}]{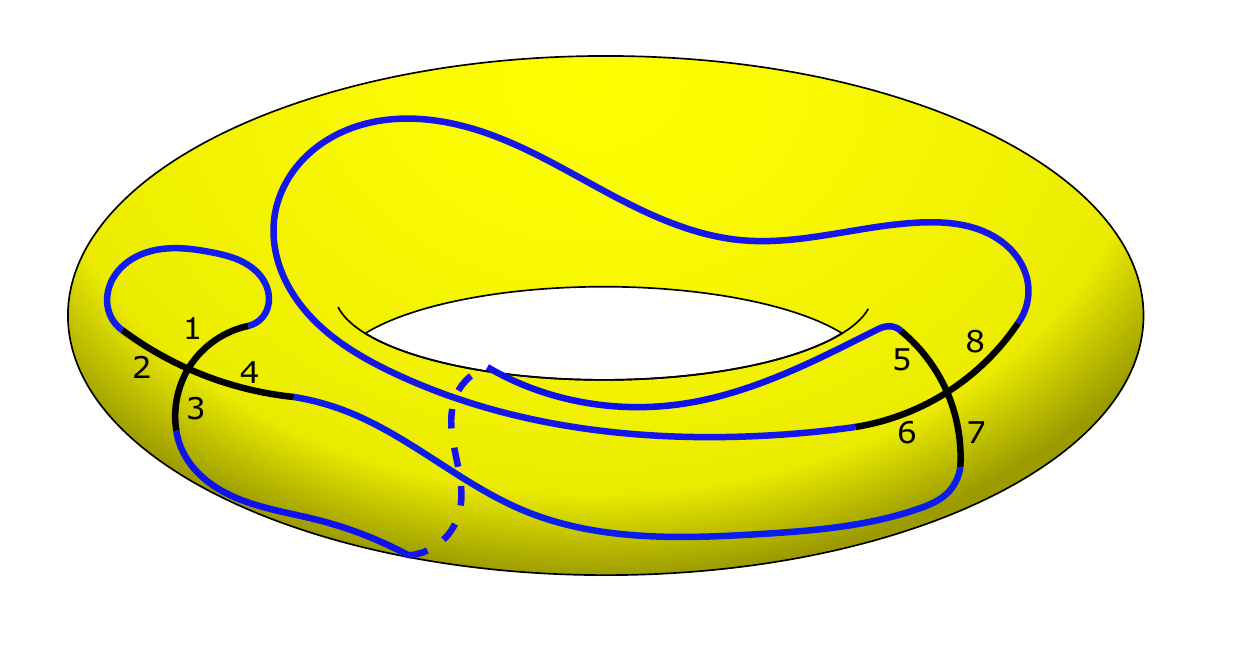}
        \subcaption{}
	\end{minipage}
	\caption{All four labeled connected 4-valent graphs with two vertices, where $e_1$ connects to $e_2$ which are not realizable on the sphere(compare with Figure \ref{fig: g=0,1 j=2, e1->e_2}). Identically, for the case where $e_1$ connects to $e_4$, there are also four distinct graphs.}
	\label{fig: g=1 j=2, e1->e_2}
\end{figure}

In Figure~\ref{fig: g=0,1 j=2, e1->e_6}, we already have two graphs each with the connections \(e_1 \leftrightarrow e_6\) and \(e_2 \leftrightarrow e_3\) or \(e_5\), and one with \(e_2 \leftrightarrow e_7\). Thus, having fixed the connection \(e_1 \leftrightarrow e_6\), one graph with \(e_2 \leftrightarrow e_3\), one with \(e_2 \leftrightarrow e_5\), and two with \(e_2 \leftrightarrow e_7\) remain to be realized on the torus (see the first four graphs in Figure~\ref{fig: g=1 j=2, e1->e_6}).

Although no graphs with \(e_2 \leftrightarrow e_8\) or \(e_4\) appear on the sphere, six such graphs can be realized on the torus (last six graphs in Figure~\ref{fig: g=1 j=2, e1->e_6}). This gives 10 graphs with \(e_1 \leftrightarrow e_6\) on the torus only. Similarly, for each of \(e_1 \leftrightarrow e_5\), \(e_1 \leftrightarrow e_7\), and \(e_1 \leftrightarrow e_8\), there are 10 graphs only realizable on the torus. In total, Figures~\ref{fig: g=1 j=2, e1->e_2} and~\ref{fig: g=1 j=2, e1->e_6} account for \(2 \cdot 4 + 4 \cdot 10 = 48\) such graphs.

Finally, we count the graphs with \(e_1 \leftrightarrow e_3\), which are not realizable on the sphere. Fixing \(e_1 \leftrightarrow e_3\), edge \(e_2\) can connect to any of \(e_5\)–\(e_8\) (but not \(e_4\)). With 3 distinct configurations per case, this yields \(4 \cdot 3 = 12\) additional graphs. Together, these give \(\mathscr{N}_1(4,2) = 48 + 12 = 60\).

\begin{figure}[!htp]
	\centering
	\begin{minipage}{0.24\textwidth}
		\centering
		\includegraphics[width=\textwidth, alt={All three labeled connected 4-valent graphs with two vertices, where $e_1 \leftrightarrow e_3$ \& $e_2 \leftrightarrow e_8$.}]{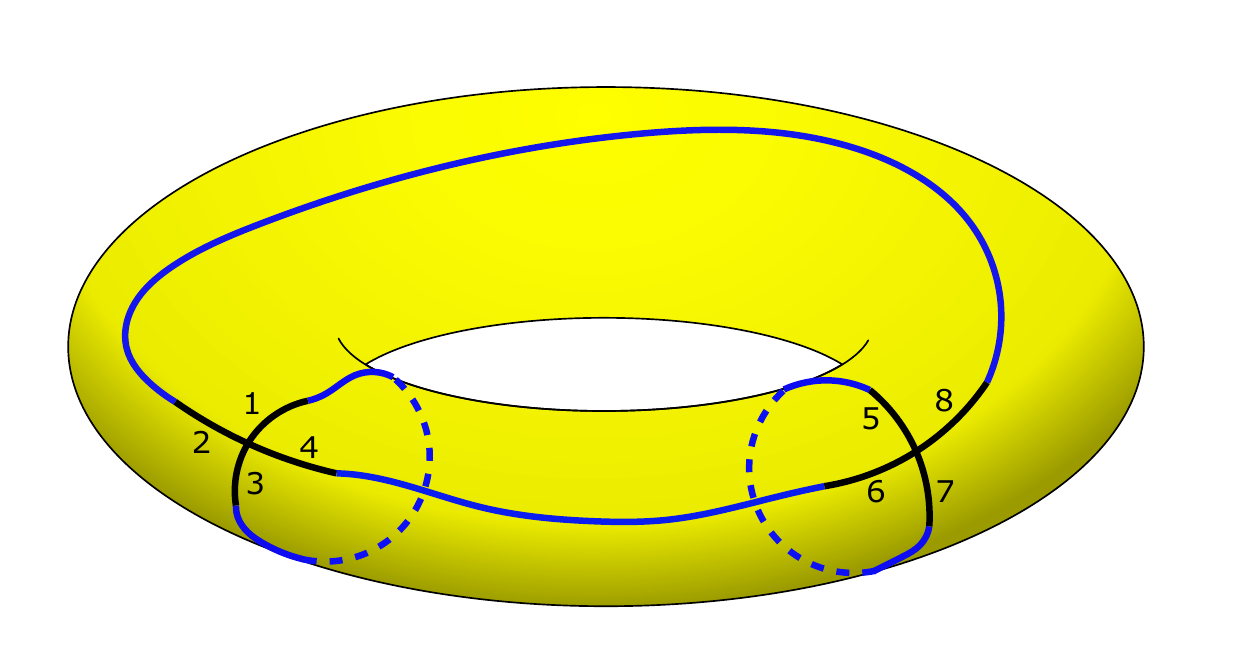}
        \subcaption{}
	\end{minipage} \hfil
	\begin{minipage}{0.24\textwidth}
		\centering
		\includegraphics[width=\textwidth, alt={All three labeled connected 4-valent graphs with two vertices, where $e_1 \leftrightarrow e_3$ \& $e_2 \leftrightarrow e_8$.}]{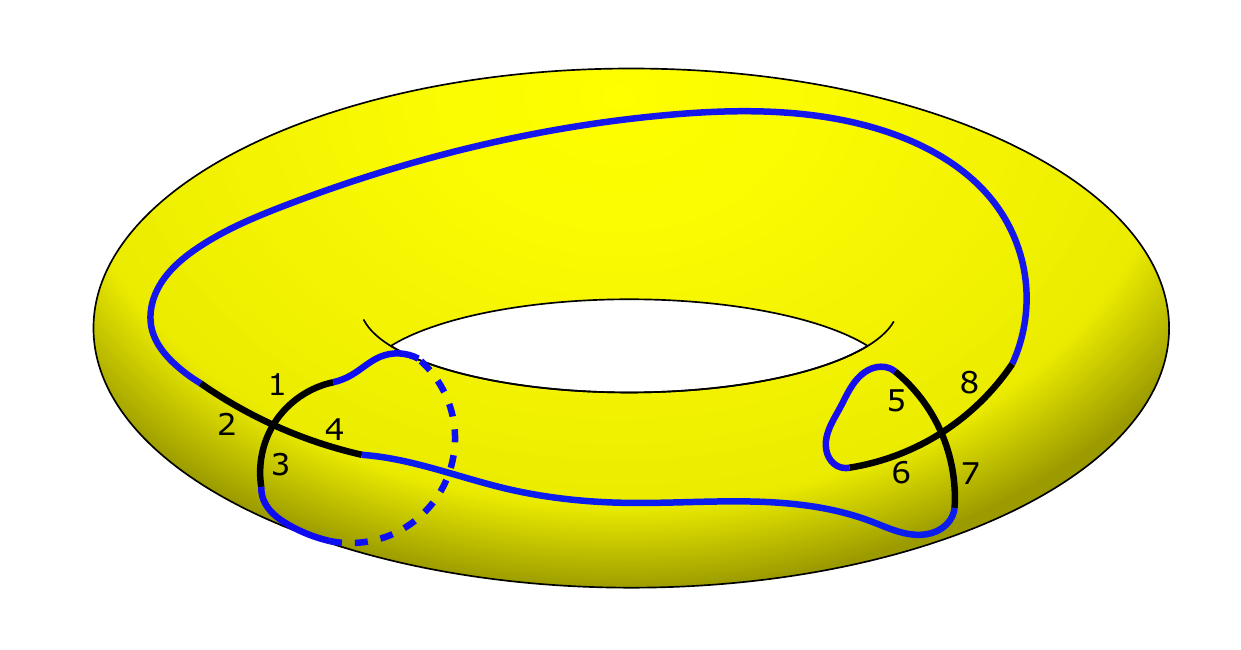}
        \subcaption{}
	\end{minipage} \hfil
	\begin{minipage}{0.24\textwidth}
		\centering
		\includegraphics[width=\textwidth, alt={All three labeled connected 4-valent graphs with two vertices, where $e_1 \leftrightarrow e_3$ \& $e_2 \leftrightarrow e_8$.}]{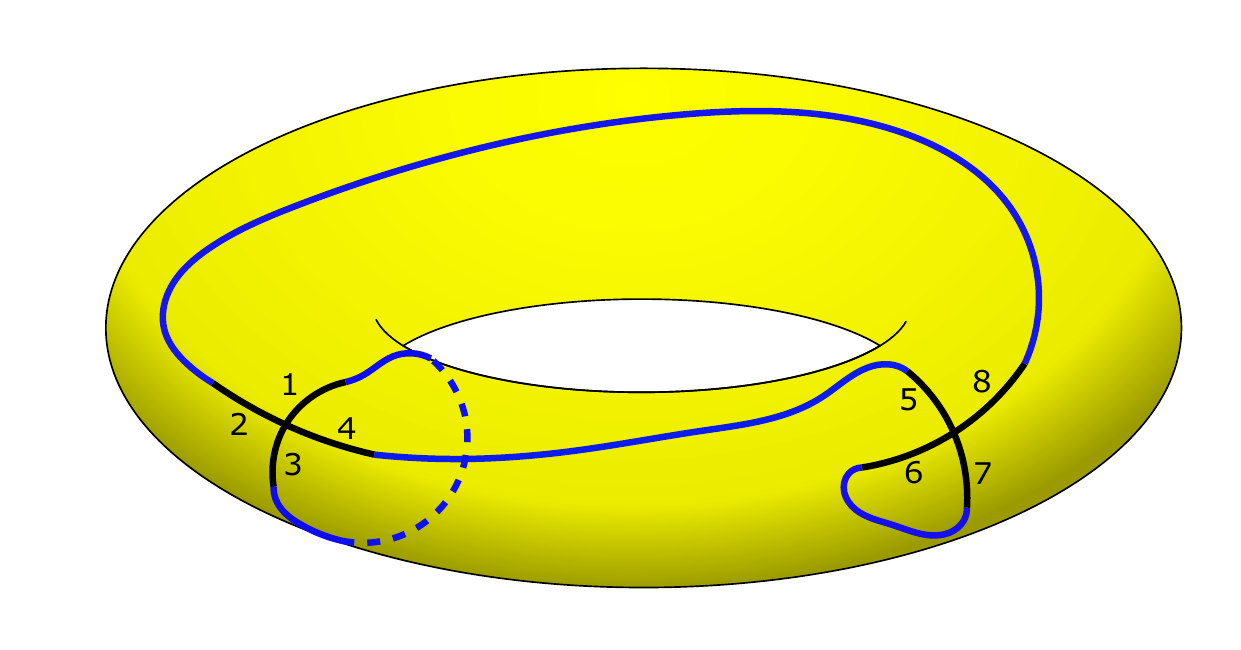}
        \subcaption{}
	\end{minipage}
	\caption{All three labeled connected 4-valent graphs with two vertices, where $e_1 \leftrightarrow e_3$ \& $e_2 \leftrightarrow e_8$. Identically, for each of the cases $e_1 \leftrightarrow e_3$ \& $e_2 \leftrightarrow e_5$, $e_1 \leftrightarrow e_3$ \& $e_2 \leftrightarrow e_7$, and $e_1 \leftrightarrow e_3$ \& $e_2 \leftrightarrow e_6$ there are three distinct graphs. Thus there exists $4 \cdot 3 =12$ distinct graphs with two vertices and $e_1 \leftrightarrow e_3$ realizable on the torus.}
	\label{fig: g=1 j=2, e1->e_3}
\end{figure}
\begin{figure}[!htp]
	\centering
	\begin{minipage}{0.24\textwidth}
		\centering
		\includegraphics[width=\textwidth, alt={All ten labeled connected 4-valent graphs with two vertices, where $e_1$ connects to $e_6$ which are not realizable on the sphere}]{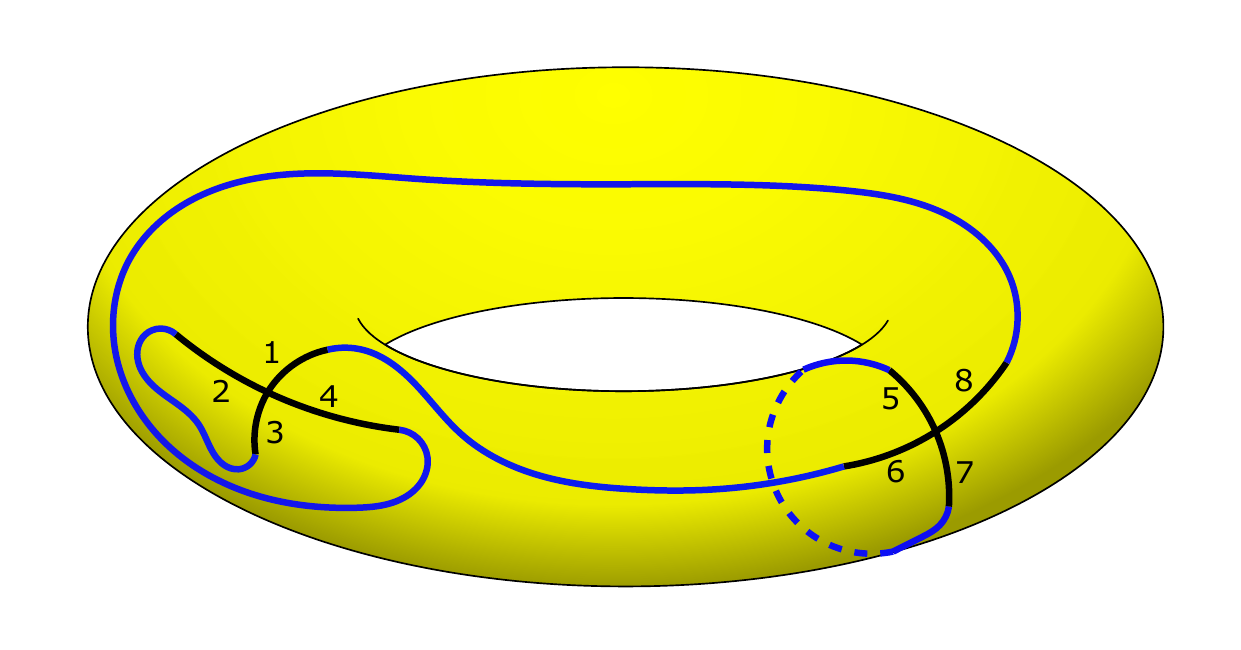}
        \subcaption{}
	\end{minipage} \hfil
	\begin{minipage}{0.24\textwidth}
		\centering
		\includegraphics[width=\textwidth, alt={All ten labeled connected 4-valent graphs with two vertices, where $e_1$ connects to $e_6$ which are not realizable on the sphere}]{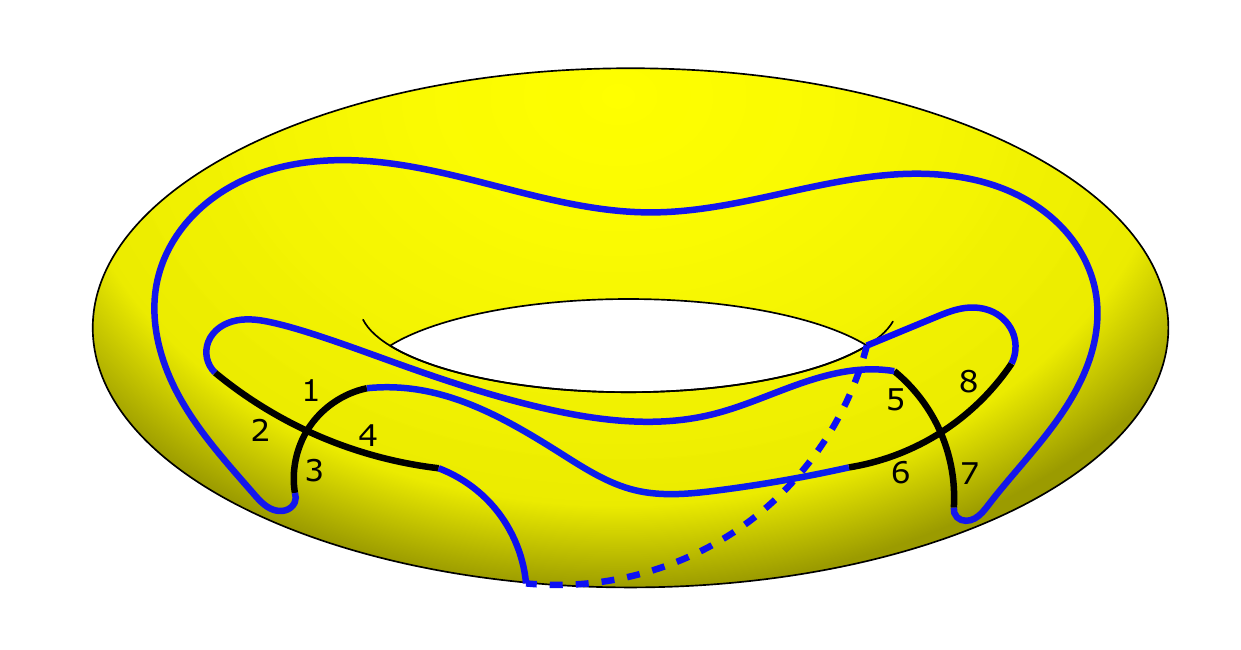}
        \subcaption{}
	\end{minipage} \hfil
	\begin{minipage}{0.24\textwidth}
		\centering
		\includegraphics[width=\textwidth, alt={All ten labeled connected 4-valent graphs with two vertices, where $e_1$ connects to $e_6$ which are not realizable on the sphere}]{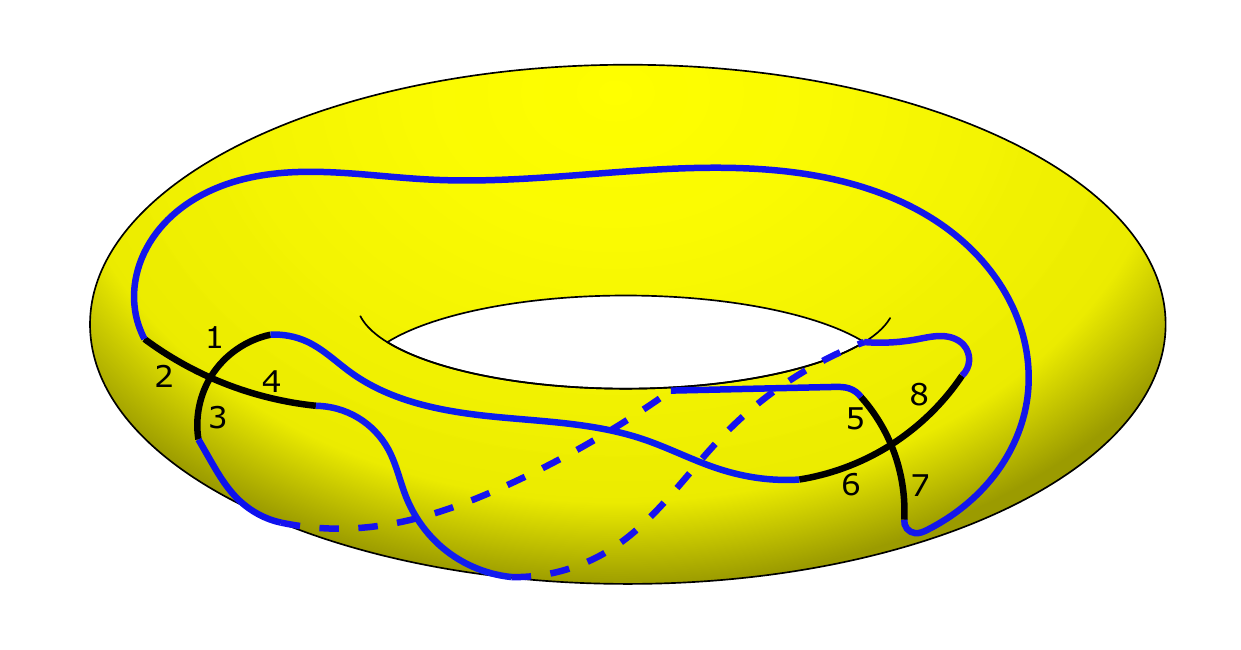}
        \subcaption{}
	\end{minipage}
	\hfil
	\begin{minipage}{0.24\textwidth}
		\centering
		\includegraphics[width=\textwidth, alt={All ten labeled connected 4-valent graphs with two vertices, where $e_1$ connects to $e_6$ which are not realizable on the sphere}]{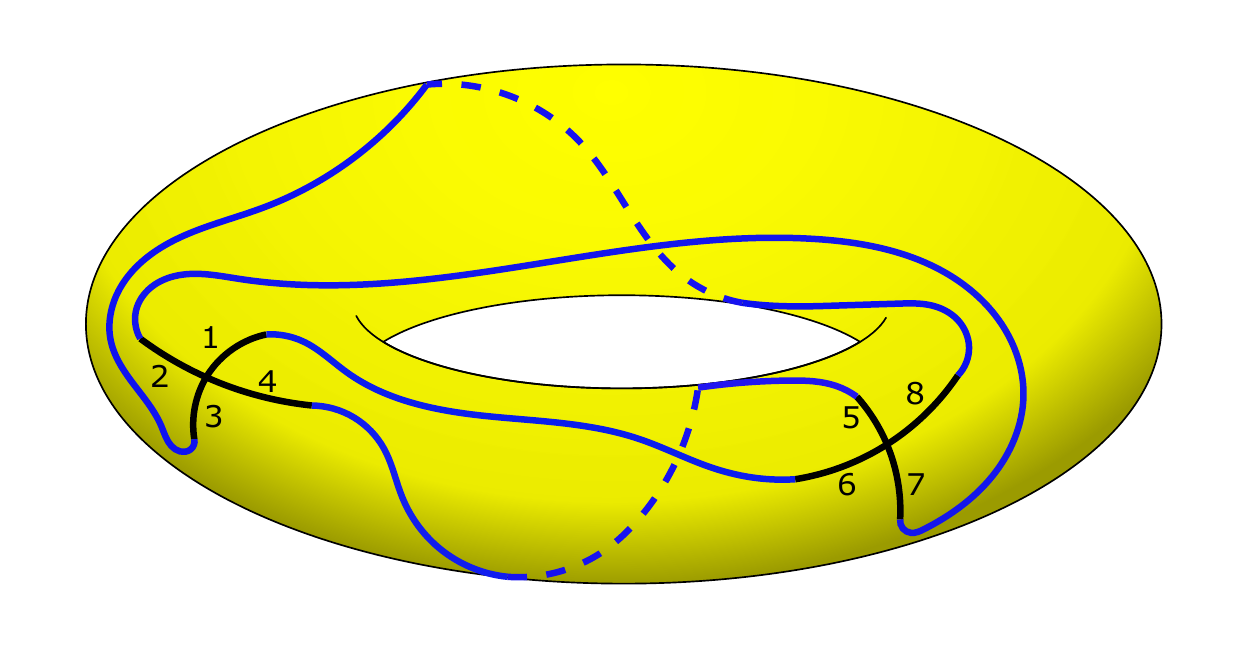}
        \subcaption{}
	\end{minipage}
	
	\medskip
	
	\centering
	\begin{minipage}{0.24\textwidth}
		\centering
		\includegraphics[width=\textwidth, alt={All ten labeled connected 4-valent graphs with two vertices, where $e_1$ connects to $e_6$ which are not realizable on the sphere}]{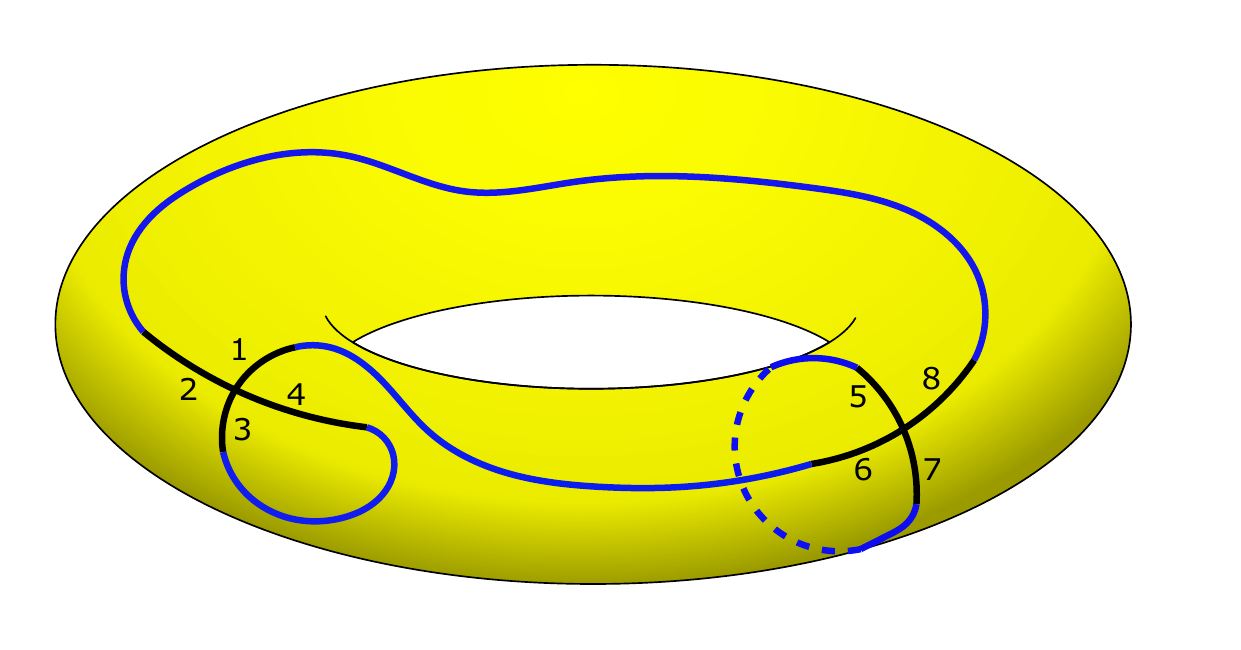}
        \subcaption{}
	\end{minipage} \hfil
	\begin{minipage}{0.24\textwidth}
		\centering
		\includegraphics[width=\textwidth, alt={All ten labeled connected 4-valent graphs with two vertices, where $e_1$ connects to $e_6$ which are not realizable on the sphere}]{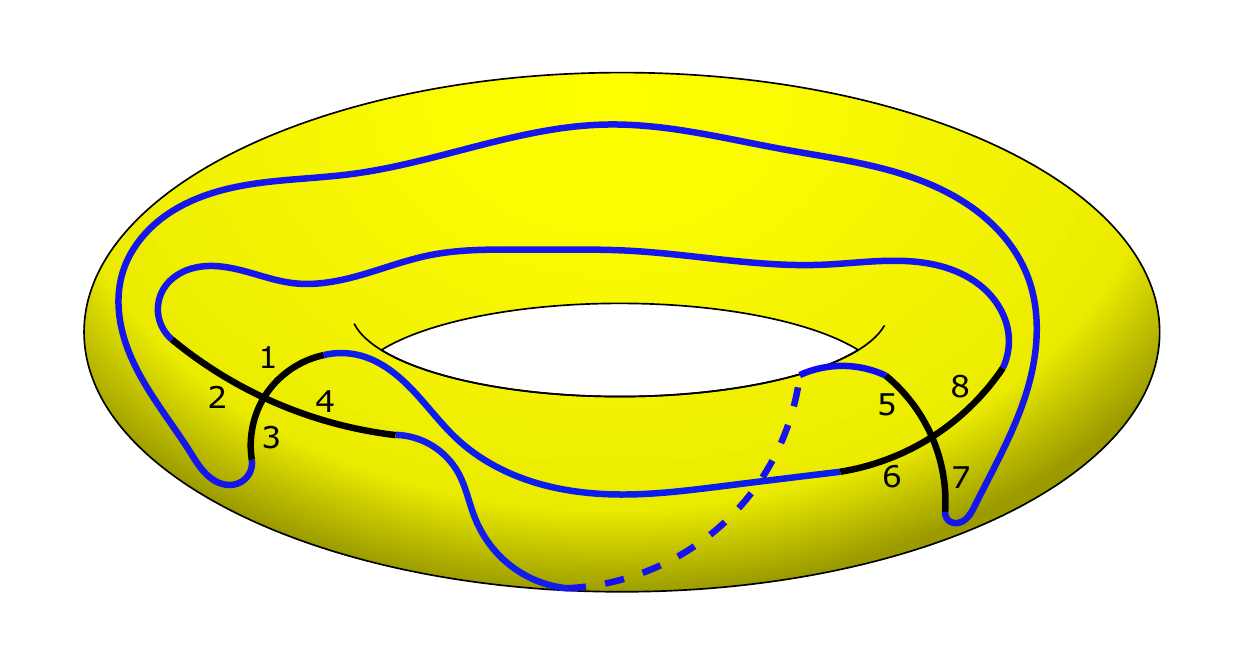}
        \subcaption{}
	\end{minipage} \hfil
	\begin{minipage}{0.24\textwidth}
		\centering
		\includegraphics[width=\textwidth, alt={All ten labeled connected 4-valent graphs with two vertices, where $e_1$ connects to $e_6$ which are not realizable on the sphere}]{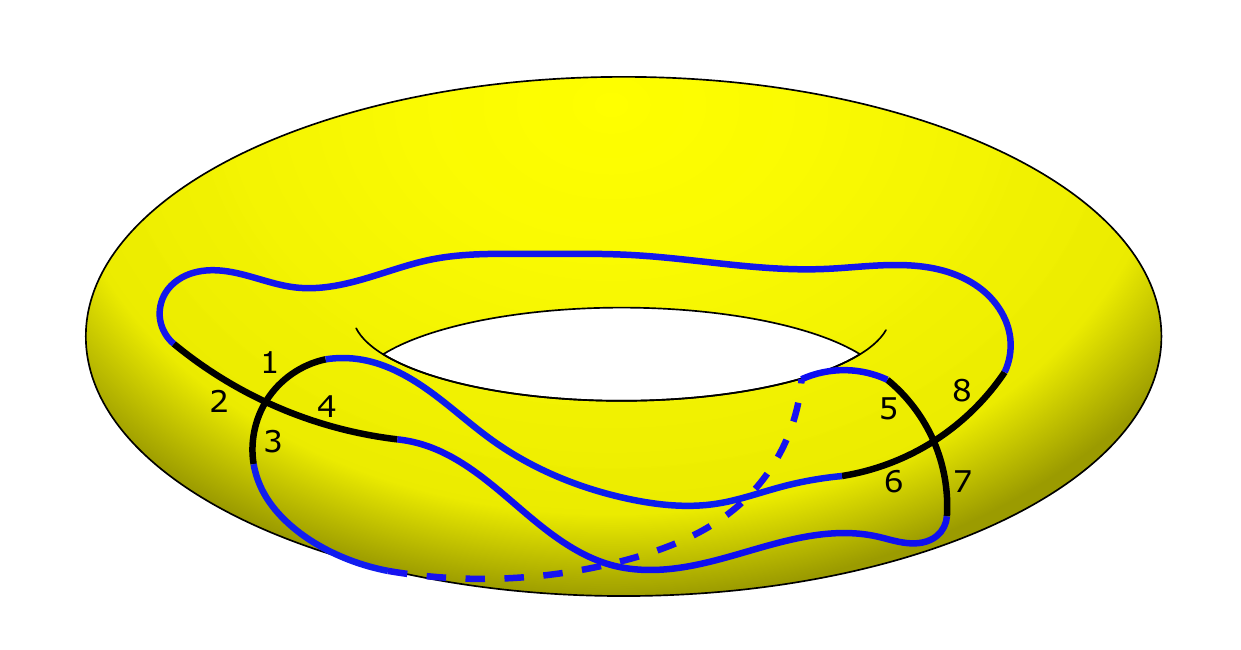}
        \subcaption{}
	\end{minipage}
	\hfil
	\begin{minipage}{0.24\textwidth}
		\centering
		\includegraphics[width=\textwidth, alt={All ten labeled connected 4-valent graphs with two vertices, where $e_1$ connects to $e_6$ which are not realizable on the sphere}]{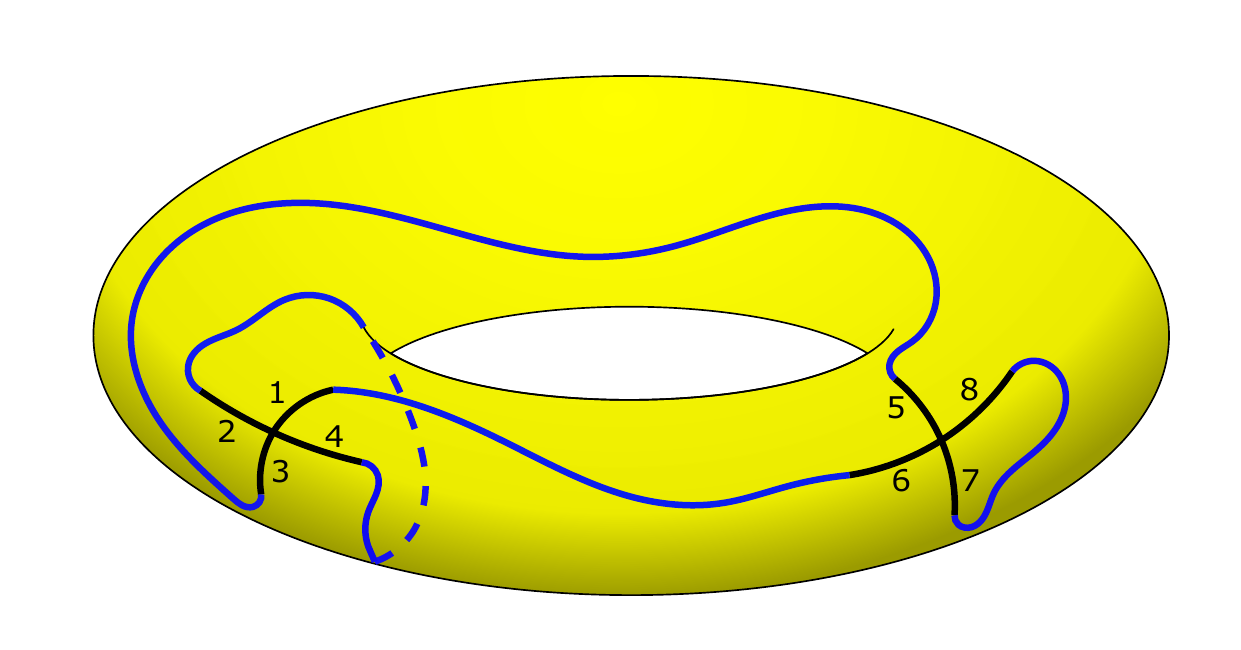}
        \subcaption{}
	\end{minipage}
	
	\medskip
	
	\centering
	\begin{minipage}{0.24\textwidth}
		\centering
		\includegraphics[width=\textwidth, alt={All ten labeled connected 4-valent graphs with two vertices, where $e_1$ connects to $e_6$ which are not realizable on the sphere}]{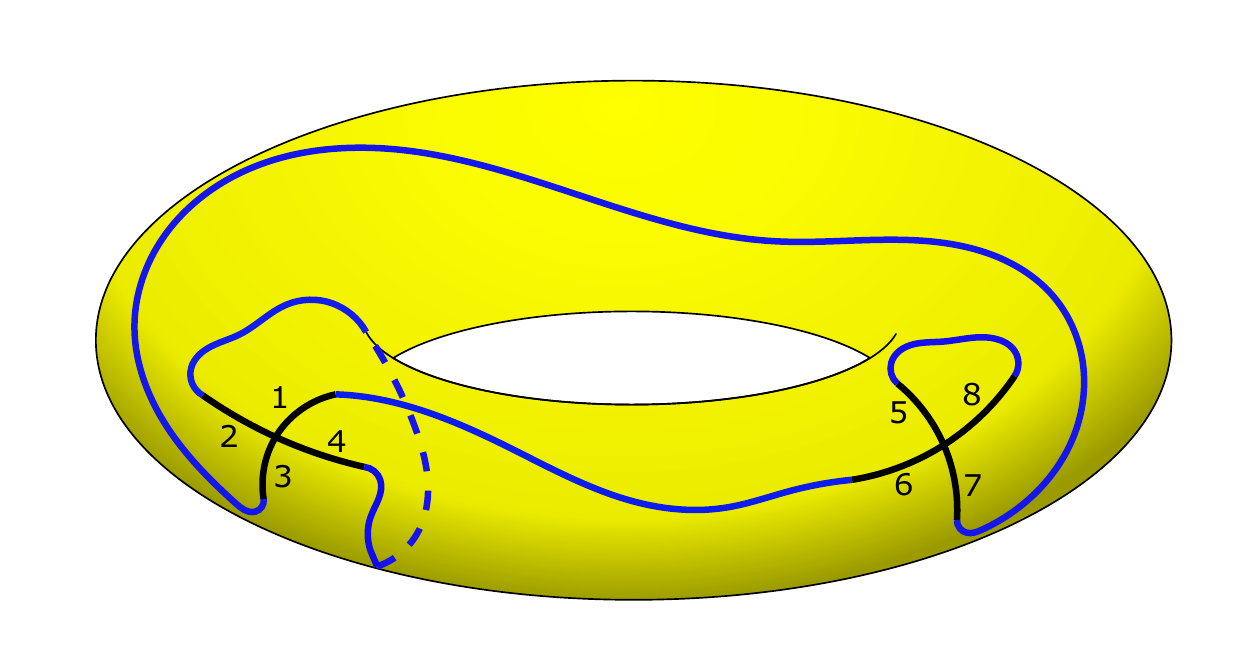}
        \subcaption{}
	\end{minipage} \hfil
	\begin{minipage}{0.24\textwidth}
		\centering
		\includegraphics[width=\textwidth, alt={All ten labeled connected 4-valent graphs with two vertices, where $e_1$ connects to $e_6$ which are not realizable on the sphere}]{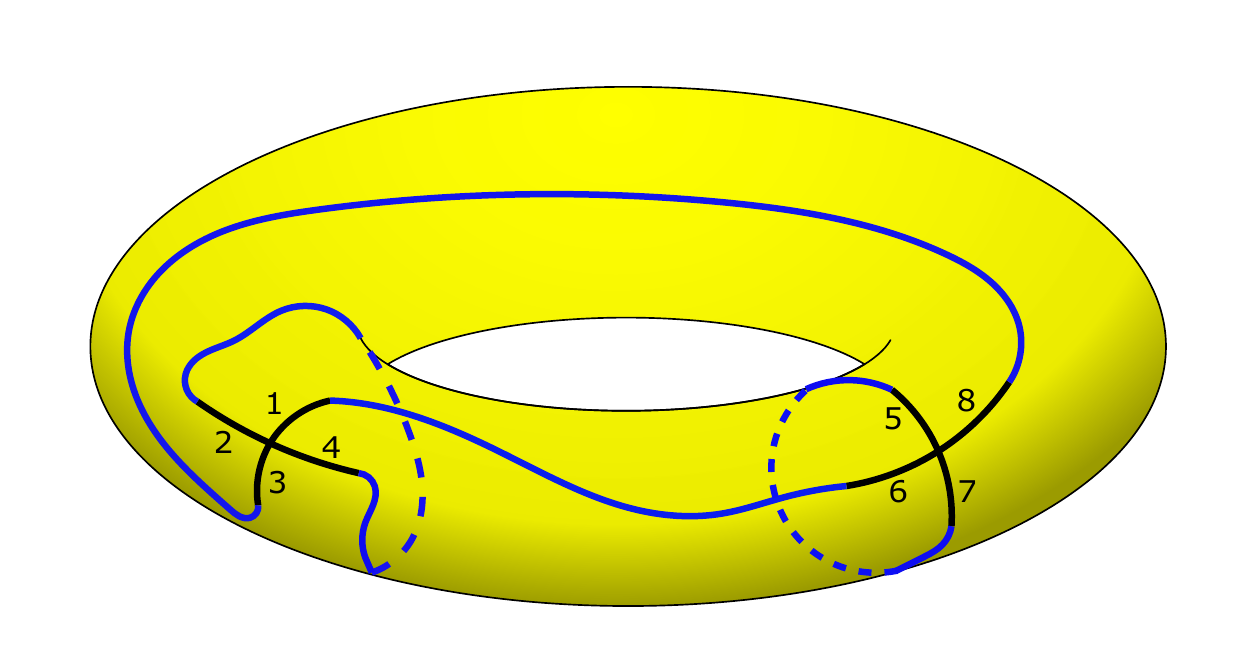}
        \subcaption{}
	\end{minipage}
	\caption{All ten labeled connected 4-valent graphs with two vertices, where $e_1$ connects to $e_6$ which are not realizable on the sphere (compare with Figure \ref{fig: g=0,1 j=2, e1->e_6}). Identically, for each one of the cases $e_1 \leftrightarrow e_5$, $e_1 \leftrightarrow e_7$, and $e_1 \leftrightarrow e_8$ there also exist 10 distinct graphs. This means that there are totally 40 distinct graphs, not realizable on the sphere, where $e_1$ connects to one of the edges emanating from $v_2$.}
	\label{fig: g=1 j=2, e1->e_6}
\end{figure}

\newpage
%\vspace{1cm}

\section*{Competing interests}
\normalfont The authors have no competing interests, financial or otherwise, to declare.

\end{appendices}

\bibliography{main}
%\printbibliography
\end{document}